\newtheorem{theorem}{Theorem}[section]
\newtheorem{lemma}[theorem]{Lemma}
\newtheorem{prop}[theorem]{Proposition}
\newtheorem{corollary}[theorem]{Corollary}
\newtheorem{question}[theorem]{Question}
\theoremstyle{definition}
\newtheorem{definition}[theorem]{Definition}
\newtheorem{notation}[theorem]{Notation}
\newtheorem{example}[theorem]{Example}
\newtheorem{claim}{Claim}
\theoremstyle{remark}
\newtheorem{remark}[theorem]{Remark}
\numberwithin{equation}{section}
\newcommand{\refl}[1]{\accentset{\leftarrow}{#1}}
\newcommand{\ip}[2]{\langle #1, #2 \rangle}
\newcommand{\gkmeas}{\frac{dz}{z}\frac{dw}{w}}
\newcommand{\EOEx}{\hspace*{0pt} \hfill $\triangleleft$}
\renewcommand{\vec}[1]{\mathbf{#1}}
\newcommand{\mcc}{\raise 0.5ex \hbox{c}}
\newcommand{\cbidisk}{\overline{\mathbb{D}^2}}
\newcommand{\aocond}{\gkboxurperpdn_\mu = \gkboxrperpdn_\mu}
\newcommand{\bLam}{\mathbf{\Lambda}}
\title{Polynomials with no zeros on the bidisk}
\author{Greg Knese} 
\address{University of California, Irvine, Irvine, CA 92697-3875}
\date{\today} 
\email{gknese@uci.edu}
\keywords{bidisk, Christoffel-Darboux, sums of squares, Fej\'er-Riesz,
  orthogonal polynomials, distinguished varieties, Pick interpolation,
  And\^{o}'s inequality, Bernstein-Szeg\H{o} measures, torus}
\subjclass[2000]{Primary 42C05; Secondary 47A57}
\begin{document}
\bibliographystyle{plain}

\maketitle

\begin{abstract} We prove a detailed sums of squares formula for
  two variable polynomials with no zeros on the bidisk $\mathbb{D}^2$
  extending previous versions of such a formula due to Cole-Wermer and
  Geronimo-Woerdeman.  The formula is related to the
  Christoffel-Darboux formula for orthogonal polynomials on the unit
  circle, but the extension to two variables involves issues of
  uniqueness in the formula and the study of ideals of two variable
  orthogonal polynomials with respect to a positive Borel measure on
  the torus which may have infinite mass.  We present applications to
  two variable Fej\'er-Riesz factorizations, analytic extension
  theorems for a class of bordered curves called distinguished
  varieties, and Pick interpolation on the bidisk.
\end{abstract}

\section{Introduction}
Let $q \in \mathbb{C}[z,w]$ be a polynomial of degree $(n,m)$ (degree
$n$ in $z$ and degree $m$ in $w$).  Suppose $q$ has no zeros on the
unit bidisk $\mathbb{D}^2 := \mathbb{D} \times \mathbb{D} \subset
\mathbb{C}^2$. Then, $q$ satisfies the following ``sums of (Hermitian)
squares'' formula: there exist polynomials $A_j \in \mathbb{C}[z,w]$,
for $j=1,\dots, n$, and $B_k \in \mathbb{C}[z,w]$, for $k=1,\dots, m$
such that
\begin{equation} \label{1stsos}
|q(z,w)|^2 - |\refl{q}(z,w)|^2 = (1-|z|^2)\sum_{j=1}^n |A_j(z,w)|^2 +
(1-|w|^2) \sum_{k=1}^m |B_k (z,w)|^2
\end{equation}
where $\refl{q}$ is the ``reflection'' of $q$:
\[
\refl{q}(z,w) = z^n w^m \overline{q\left(\frac{1}{\bar{z}},
    \frac{1}{\bar{w}}\right)}.
\]
This was first proved in Cole-Wermer \cite{CW99}.  Here is an example.

\begin{example} \label{easyexample1}
The polynomial $q(z,w) = 2-z-w$ has degree $(1,1)$ and no zeros on
$\mathbb{D}^2$. The reflection of $q$ is $\refl{q}(z,w) =
2zw-w-z$. The sum of squares decomposition for $q$ is rather simple:
\[
|2-z-w|^2 - |2zw - w - z|^2 = (1-|z|^2)2|1-w|^2 + (1-|w|^2)2|1-z|^2.
\]
\EOEx
\end{example}

There are several reasons why we deem this formula interesting. First,
it can be used to give direct proofs of And\^{o}'s inequality from
operator theory (in Cole-Wermer \cite{CW99}) and Agler's Pick
interpolation theorem for the bidisk.  Second, \eqref{1stsos} can be
thought of as a two variable version of the Christoffel-Darboux
formula for orthogonal polynomials on the unit circle. The
Christoffel-Darboux formula has great importance in the theory of
orthogonal polynomials on the unit circle as evidenced by its
prominence in the book Simon \cite{bS05} and its featured role in the
survey Simon \cite{bS08}.  Third, the most obvious analogue of
\eqref{1stsos} in three or more variables is false as it would imply a
three operator version of And\^{o}'s inequality (something known to be
false). Fourth, \eqref{1stsos} can be used to prove a determinantal
representation for a class of algebraic curves in $\mathbb{C}^2$
called distinguished varieties.

One drawback to the Cole-Wermer formula is that the sums of squares
decomposition is not unique.  Consider the following example.

\begin{example} \label{hardexample}
  Let $f(z,w) = 2-zw-z^2w = q(zw,z^2w)$, where $q$ is from the
  previous example.  Again, $f$ has no zeros on $\mathbb{D}^2$ and we
  define  
\[
\refl{f}(z,w) = z^3 w^2 \overline{f(1/\bar{z}, 1/\bar{w})} =
\refl{q}(zw, z^2w)
\]
In this case, if we replace $z$ with $zw$ and $w$ with $z^2w$
in the sums of squares decomposition for $q$ we get
\[
\begin{aligned}
  |2-zw - z^2w|^2 &- |2z^3w^2-z^2w - zw|^2 \\
  & = (1-|zw|^2)2|1-z^2w|^2 + (1-|z^2w|^2)2|1-zw|^2
\end{aligned}
\]
and there are a number of ways to decompose this further.  One way is
\[
\begin{aligned}
|2-zw-z^2w|^2 &- |2z^3w^2 - z^2w - zw|^2 \\
&= (1-|z|^2)|\mathbf{E}(z,w)|^2
+ (1-|w|^2)|\mathbf{F}(z,w)|^2
\end{aligned}
\]
with
\[
\mathbf{E}(z,w) = \sqrt{2} \begin{pmatrix} 1-z^2w \\ w-zw^2
  \\zw-z^2w^2 \end{pmatrix} = \sqrt{2} \begin{pmatrix} 1 & 0 & -w \\ w
  & -w^2 & 0 \\ 0 & w & -w^2 \end{pmatrix} \begin{pmatrix} 1 \\ z \\
  z^2 \end{pmatrix}
\]
\[
\mathbf{F}(z,w) = \sqrt{2}\begin{pmatrix} z-z^3w \\
  1-zw \end{pmatrix} = \sqrt{2} \begin{pmatrix} z & -z^3 \\ 1 &
  -z \end{pmatrix} \begin{pmatrix} 1 \\ w \end{pmatrix}.
\]

Another way is
\[
\mathbf{E}(z,w) = \begin{pmatrix} \sqrt{2}(z-z^2w) \\ z-z^2 \\
  2-zw-z^2w \end{pmatrix}  = \begin{pmatrix} 0 & \sqrt{2} & -\sqrt{2}w
  \\ 0 & 1 & -1 \\ 2 & -w & -w \end{pmatrix} \begin{pmatrix} 1 \\ z \\
  z^2 \end{pmatrix} \text{ and}
\]
\[
\mathbf{F}(z,w) = \begin{pmatrix} z +z^2 -2z^3w \\
  z^2-z^3 \end{pmatrix} = \begin{pmatrix} z + z^2 & -2z^3 \\ z^2-z^3 &
  0 \end{pmatrix} \begin{pmatrix} 1 \\ w \end{pmatrix}.
\]

These two choices for $\mathbf{E}$ and $\mathbf{F}$ are not equivalent
up to unitary multiplication  because in the first case
\[
\det \sqrt{2} \begin{pmatrix} 1 & 0 & -w \\ w
  & -w^2 & 0 \\ 0 & w & -w^2 \end{pmatrix} = 2\sqrt{2}w^3(w-1)
\]
and in the second case
\[
\det \begin{pmatrix} 0 & \sqrt{2} & -\sqrt{2}w
  \\ 0 & 1 & -1 \\ 2 & -w & -w \end{pmatrix}=2\sqrt{2}(w-1).
\]
\EOEx
\end{example}

It turns out that we can guarantee that the Cole-Wermer sums of
squares decomposition is unique if we require more.  We shall present
the main theorem after some quick notation.

\begin{notation}\label{mainthmnotation} We use $\mathbb{T}$ to denote
  the unit circle $\partial \mathbb{D}$ and $\mathbb{T}^2$ is the two
  dimensional torus, or just ``torus.'' We use $\mathbb{C}^N[z]$ to
  denote the set of $\mathbb{C}^N$ valued polynomials in the variable
  $z$; likewise, we use $\mathbb{C}^N[z,w]$ to denote the set of
  $\mathbb{C}^N$ valued polynomials in $z$ and $w$.  We define
\begin{equation} \label{lambda}
  \mathbf{\Lambda}_N(z) := \begin{pmatrix} 1 \\ z \\ \vdots \\
    z^{N-1} \end{pmatrix} \in \mathbb{C}^N[z].
\end{equation}
If $\mathbf{E}(z,w) = \sum_{j=0}^{n-1} \mathbf{E}_j(w) z^j\in
\mathbb{C}^N[z,w]$ has degree less than $n$ in $z$, we will frequently
write $\mathbf{E}$ in the matrix form
\[
\mathbf{E}(z,w) = (\mathbf{E}_0(w), \mathbf{E}_1(w), \dots,
\mathbf{E}_{n-1}(w)) \mathbf{\Lambda}_n (z) = E(w) \mathbf{\Lambda}_n(z)
\]
where $E(w) = (\mathbf{E}_0(w), \mathbf{E}_1(w), \dots,
\mathbf{E}_{n-1}(w))$ is an $N \times n$ matrix valued polynomial in
$w$.

We let $|\cdot|$ denote the standard norm on $\mathbb{C}^N$ (where the
$N$ will be understood from context) and therefore if
$\mathbf{E}=(e_1,\dots, e_N)^t \in \mathbb{C}^N[z,w]$, then
\[
|\mathbf{E}(z,w)|^2 = \sum_{j=1}^{N}
|e_j(z,w)|^2
\]
is evaluated pointwise (and does not represent any type of function
space norm).

\end{notation}

Here is an abridged version of our main theorem. We will fill in more
details in Theorem \ref{mainthmfull}.

\begin{theorem} \label{mainthm} Let $q\in \mathbb{C}[z,w]$ have degree
  at most $(n,m)$ with no zeros on $\mathbb{D}^2$ and finitely many
  zeros on $\mathbb{T}^2$.  Then, there exist vector polynomials
  $\mathbf{E} \in \mathbb{C}^n[z,w]$ and $\mathbf{F} \in
  \mathbb{C}^m[z,w]$ of degree at most $(n-1,m)$ and $(n,m-1)$
  respectively (in each component) with the property that if we write
  them in matrix form as
\[
\mathbf{E}(z,w) = E(w) \mathbf{\Lambda}_n(z) 
\qquad
\mathbf{F}(z,w) = F(z) \mathbf{\Lambda}_m(w) 
\]
where $E(w)$ is an $n\times n$ matrix polynomial of degree at most $m$
and $F(z)$ is an $m\times m$ matrix polynomial of degree at most $n$,
then
\begin{enumerate}
\item $E(w)$ is invertible for all $w \in \mathbb{D}$,
\item $z^n \overline{F(1/\bar{z})}$ is invertible for all $z \in
  \mathbb{D}$,
\item the following formula holds
  \begin{align} \label{mainsos} |q(z,w)|^2 & - |\refl{q}(z,w)|^2 \\
    \nonumber &= (1-|z|^2) |\mathbf{E}(z,w)|^2 + (1-|w|^2)
    |\mathbf{F}(z,w)|^2, \text{ and}
\end{align}

\item $\mathbf{E} \in \mathbb{C}^n[z,w]$ and $\mathbf{F}\in
  \mathbb{C}^m[z,w]$ satisfying items (1) and (3) above are unique up
  to unitary multiplication.

\end{enumerate}

\end{theorem}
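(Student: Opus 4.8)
The plan is to build the pair $(\mathbf{E},\mathbf{F})$ from a Bernstein--Szeg\H{o}-type measure and an associated reproducing kernel, and to extract uniqueness from the rigidity of the Christoffel--Darboux structure in one variable applied in each slice. Concretely, I would first dispose of the zero set: since $q$ has no zeros on $\mathbb{D}^2$ and only finitely many on $\mathbb{T}^2$, the function $|q|^{-2}$ (suitably interpreted, or $|q_{\mathrm{stab}}|^{-2}\,d\sigma$ for the stable factor) defines a positive Borel measure $\mu$ on $\mathbb{T}^2$, possibly of infinite mass, that is a ``Bernstein--Szeg\H{o}'' measure with weight $1/|q|^2$. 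The right framework is the space $\mathcal{H}$ of polynomials of degree $\le (n-1,m-1)$ (or the appropriate one-sided analogues) with the $L^2(\mu)$ inner product; the subtlety flagged in the abstract is precisely that $\mu$ may have infinite mass, so one must work with the ideal of polynomials vanishing to the right order at the $\mathbb{T}^2$-zeros of $q$ and only then does the inner product make sense. I would construct $\mathbf{E}$ as (a vectorization of) the orthonormal polynomials in the $z$-variable with $w$ playing the role of a parameter, i.e. $\mathbf{E}(z,w)$ is the vector of the reproducing-kernel-generating orthonormal family for the degree-$n$-in-$z$ space, and symmetrically $\mathbf{F}$ in the $w$-variable after reflecting in $z$.

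The core identity \eqref{mainsos} is then a two-variable Christoffel--Darboux computation: the one-variable Christoffel--Darboux formula applied in the $z$-slice gives
\[
\frac{|q(z,w)|^2 - |\refl{q}(z,w)|^2}{1-|z|^2}
\]
as a sum of squares of the $z$-orthonormal polynomials, but one obtains a ``remainder'' that must itself be shown to be $(1-|w|^2)$ times a sum of squares; identifying that remainder with $|\mathbf{F}(z,w)|^2$ is where the Geronimo--Woerdeman-style argument about the two natural one-variable orthogonal-polynomial structures agreeing gets used. Items (1) and (2) — invertibility of $E(w)$ on $\mathbb{D}$ and of $z^n\overline{F(1/\bar z)}$ on $\mathbb{D}$ — should follow from the fact that these matrices are, up to triangular change of basis, the matrices representing multiplication/shift on a finite-dimensional model space whose spectrum is governed by the zeros of $q$: since $q$ has no zeros in $\mathbb{D}^2$, for fixed $w\in\mathbb{D}$ the polynomial $z\mapsto q(z,w)$ has all its zeros outside $\overline{\mathbb{D}}$ except possibly finitely many controlled ones, forcing the relevant determinant (a scalar multiple of a product tied to $q(\cdot,w)$) to be nonvanishing on $\mathbb{D}$.

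For uniqueness (item (4)), suppose $(\mathbf{E},\mathbf{F})$ and $(\tilde{\mathbf{E}},\tilde{\mathbf{F}})$ both satisfy (1) and (3). Restricting \eqref{mainsos} to a fixed $w_0\in\mathbb{T}$ makes the $(1-|w|^2)$ term vanish on $|w|=w_0$, so on each torus slice $\{|z|<1,\ w=w_0\}$ one gets two sum-of-squares decompositions of the same one-variable polynomial $(1-|z|^2)|\mathbf{E}(z,w_0)|^2$; by the uniqueness in the one-variable Christoffel--Darboux / Fej\'er--Riesz setting (the Gram matrix of the $z$-coefficient vectors is forced), $E(w_0)^*E(w_0) = \tilde E(w_0)^*\tilde E(w_0)$ for a.e.\ $w_0\in\mathbb{T}$, hence as polynomial identities $E(w)^*E(w)=\tilde E(w)^*\tilde E(w)$ and similarly for $F$. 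Now invertibility of $E(w)$ and $\tilde E(w)$ on $\mathbb{D}$ (item (1)) lets me define $U(w) := \tilde E(w) E(w)^{-1}$, which is analytic and unitary-valued on $\mathbb{T}$, hence a constant unitary by a matrix-valued maximum-principle / inner-outer argument; so $\tilde{\mathbf{E}} = U\mathbf{E}$ with $U$ a constant unitary, and symmetrically for $\mathbf{F}$ using item (2). \emph{The main obstacle} I anticipate is not the algebra of Christoffel--Darboux but the infinite-mass measure: making ``restrict to the slice'' and ``two sum-of-squares decompositions coincide'' rigorous when $\mu$ is only a Borel measure with possible infinite mass at finitely many points of $\mathbb{T}^2$ requires carefully tracking which polynomials lie in the relevant $L^2(\mu)$ space (equivalently, the ideal generated by the $\mathbb{T}^2$-zeros of $q$) and checking that the reproducing kernel and its Christoffel--Darboux form survive the degeneration — this is the technical heart the paper's later sections on ideals of two-variable orthogonal polynomials are presumably built to handle, and I would lean on those.
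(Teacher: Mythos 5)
Your overall route is the paper's route (Bernstein--Szeg\H{o} measure $d\mu=|q|^{-2}d\sigma$, a two-variable Christoffel--Darboux identity via reproducing kernels, uniqueness by polarization plus invertibility), but the existence half as proposed has real gaps. The entire identity \eqref{mainsos} hinges on proving that this possibly infinite-mass measure satisfies the orthogonality condition $\aocond$ --- equivalently, that $q$ is orthogonal in $L^2(\mu)$ to the relevant half-space of frequencies even though $1/q$ is not integrable --- and your proposal explicitly defers exactly this step (``I would lean on those''). In the paper this is Proposition \ref{qrelations} and Theorem \ref{BernSzegthm}, proved by a slice-wise Smirnov-class argument ($f(z,\cdot)/q(z,\cdot)\in N^+\cap L^2=H^2$ for a.e.\ $z\in\mathbb{T}$) together with Cauchy's formula; without it the error term $\epsilon$ in Theorem \ref{rearrangethm} does not vanish and there is no sum of squares at all. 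Relatedly, building $\mathbf{E}$ from ``orthonormal polynomials in $z$ with $w$ as a parameter'' does not produce an element of $\mathbb{C}^n[z,w]$: slice-by-slice orthonormalization yields coefficients that are only measurable (at best algebraic) in $w$, not polynomial. The paper instead orthonormalizes the genuinely two-variable subspaces $\gkboxrperpup_\mu$ and $\gkboxuperplt_\mu$, and must then prove that their dimensions are exactly $n$ and $m$ (Lemma \ref{dimensionlemma}, via a lurking-isometry/transfer-function argument using that $\refl{q}/q$ is in lowest terms); without this count $E(w)$ and $F(z)$ are not square matrices and items (1)--(2) do not even parse. Your invertibility heuristic (spectrum of a shift on a model space, determinant ``tied to $q(\cdot,w)$'') is unsubstantiated; the paper's mechanism is different: rank-deficiency of $E(w_0)$ for $w_0\in\mathbb{D}$ would produce a nonzero element of $\gkboxrperpup_\mu$ divisible by $w-w_0$, contradicting a norm identity (Proposition \ref{fullrankprop}), with the boundary points handled by the divisor Lemma \ref{BernSzegdivisors}.

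The uniqueness argument also has two flaws. First, ``$U(w):=\tilde E(w)E(w)^{-1}$ is analytic on $\mathbb{D}$ and unitary-valued on $\mathbb{T}$, hence a constant unitary'' is false as stated: nonconstant rational matrix inner functions (e.g.\ $U(w)=wI$) satisfy both properties. What forces constancy is that \emph{both} $\tilde E E^{-1}$ and $E\tilde E^{-1}$ are analytic in $\mathbb{D}$ and, by the maximum principle applied to their boundary isometry, contractive there; a contraction whose inverse is a contraction is a constant unitary (this is exactly how Lemmas \ref{hardunique} and \ref{uniquenesslemma} run, using item (1) for both pairs). You have the hypotheses to do this, but the step must be made. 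Second, item (4) asserts uniqueness among pairs satisfying only items (1) and (3), so you may not assume the competing $\tilde{\mathbf{F}}$ satisfies item (2) and argue ``symmetrically using item (2).'' The paper treats $\mathbf{F}$ asymmetrically: once $\mathbf{E}=U_1\tilde{\mathbf{E}}$, identity \eqref{mainsos} forces $|\mathbf{F}(z,w)|^2=|\tilde{\mathbf{F}}(z,w)|^2$ on all of $\mathbb{C}^2$, and the elementary polarization Lemma \ref{simpleunique} (no invertibility hypothesis) then produces the unitary $U_2$. As written, your argument proves at most a weaker uniqueness statement.
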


A number of remarks are in order.

\begin{remark} 
As one can check, in Example \ref{hardexample}, the second choices of
$\mathbf{E}$ and $\mathbf{F}$ fit the requirements of the above
theorem, while the first choices do not.
\end{remark}

\begin{remark}
In the case of a polynomial with no zeros on the \emph{closed} bidisk
$\cbidisk$, this theorem is deducible from the work of
Geronimo-Woerdeman \cite{GW04}.  It is the goal of this paper to
extend the sums of squares decomposition with uniqueness to all
polynomials with no zeros on the \emph{open} bidisk $\mathbb{D}^2$.
Why are we concerned with such an extension?

First, it allows a direct, unified proof of the Cole-Wermer formula
which does not make use of And\^{o}'s inequality, Agler's
interpolation theorem, or any of their close relatives (the original
proof of Cole and Wermer relies heavily on these results). Going from
the case of no zeros on the closed bidisk (as in the
Geronimo-Woerdeman formula) to the general case of no zeros on the
open bidisk (as in the Cole-Wermer formula) can be accomplished with a
limiting argument (this was done in Knese \cite{gK08a}).  However,
preserving the uniqueness aspect in a limit does not seem to be
straightforward.  We will comment on this in Remark
\ref{finiteremark}.

Second, it allows us to prove a bounded analytic extension theorem
with estimates for the already alluded to curves called distinguished
varieties.  Distinguished varieties are algebraic curves in
$\mathbb{C}^2$ that exit the bidisk through the distinguished boundary
$\mathbb{T}^2$.  We proved a bounded analytic extension theorem in
Knese \cite{gK08b} using the Geronimo-Woerdeman version of the sums of
squares formula, but in that paper we were restricted to the case of
distinguished varieties with no singularities on the torus.  Theorem
\ref{mainthm} allows us to remove that restriction.

Third, our method of proof may be of interest to some as we study
orthogonal polynomials with respect to a positive Borel measure on
$\mathbb{T}^2$ which may have infinite mass.  Since such measures will
not necessarily have finite moments, methods involving doubly Toeplitz
matrices (as in Geronimo-Woerdeman \cite{GW04}) are not directly
available to us, and therefore our method of using reproducing kernels
of subspaces of polynomials from Knese \cite{gK08a} is well-adapted to
this situation.  Measures with infinite mass also require us to study
the ideal of square integrable polynomials.  This presents a
difference between one variable and two: there is no reason to study
one variable orthogonal polynomials with respect to a measure with
infinite mass because the ideal of integrable polynomials is a
principal ideal, since all ideals in one variable are.  Our method of
proof also allows us to improve a characterization of two variable
Fej\'er-Riesz factorizations from Geronimo-Woerdeman \cite{GW04}.  We
discuss this below.
\end{remark}

\begin{remark} \label{finiteremark}
The assumption of ``finitely many zeros on $\mathbb{T}^2$'' is there
to put us into the most interesting case and not to avoid a
difficulty.  Every polynomial $q$ with no zeros on the bidisk can be
factored into $q=q_1 q_2$ where $q_1$ has at most finitely many zeros
on the two-torus and every factor of $q_2$ has infinitely many zeros
on the two-torus.  If $q$ has a non-trivial factor of the type $q_2$,
then it can be factored out of the entire sums of squares formula.
These polynomials with no zeros on the bidisk and infinitely many
zeros on the two-torus can be studied separately, and this was done in
Knese \cite{gK08b}.  These notions will appear several places later on
so we give the following definitions of \emph{toral} and
\emph{atoral}.

\begin{definition} \label{def:toral} 
A polynomial $p\in \mathbb{C}[z,w]$ is \emph{toral} if every factor of
  $p$ has infinitely many zeros on $\mathbb{T}^2$.
\end{definition}

\begin{definition} \label{def:atoral} 
A polynomial $p\in \mathbb{C}[z,w]$ is \emph{atoral} if $p$ has
  finitely many zeros on $\mathbb{T}^2$.
\end{definition}

These terms were introduced in Agler-M\mcc Carthy-Stankus \cite{AMS06}
in a more natural way that makes sense for higher dimensions, but
these definitions will suffice for our purposes.

The fact that there are these two types of polynomials makes it
difficult to come up with a limiting argument to prove the sums of
squares formula. We are not suggesting such an argument does not
exist, but any argument that does exist would have to take into
account the difference between polynomials with finitely many zeros on
the bidisk and those with infinitely many.  In any case, it is
preferable to give a unified approach, and this has the added benefit
of introducing notions of ``ideals of orthogonal polynomials'' with
respect to a positive Borel measure.
\end{remark}

\begin{remark}
The requirements on $\mathbf{E}$ and $\mathbf{F}$ in Theorem
\ref{mainthm} that make the decomposition unique are essential in
proving our bounded analytic extension theorem for distinguished
varieties.  The requirements are also curiously asymmetric.  Of course
the roles of $z$ and $w$ can be switched (and the invertibility
requirements switched around).  In fact, the entire formula
\eqref{mainsos} can be ``reflected:'' replace $(z,w)$ with
$(1/\bar{z}, 1/\bar{w})$ and multiply through by $-|z^nw^m|^2$.  The
result will be a new sums of squares formula with $\mathbf{E}$ and
$\mathbf{F}$ replaced with
\[
\refl{\mathbf{E}}(z,w) = z^{n-1}w^m \overline{\mathbf{E}(1/\bar{z},
  1/\bar{w})} \text{ and } \refl{\mathbf{F}}(z,w) = z^{n}w^{m-1}
  \overline{\mathbf{F}(1/\bar{z}, 1/\bar{w})}
\]
respectively. These new choices will have the invertibility
requirements reversed in Theorem \ref{mainthm}.  Notice that in
Example \ref{hardexample} the two choices for the sums of squares
decompositions are \emph{not} simply obtained from one another by
performing this reflection. 

These thoughts beg the following question.  Which polynomials with no
zeros on the bidisk have a unique sums of squares decomposition?
\end{remark}

\begin{theorem} \label{uniquedecompthm} 
Suppose $q \in \mathbb{C}[z,w]$ has no zeros on the bidisk and
  finitely many zeros on the torus. Suppose $q$ has degree $(n,m)$.
  The following are equivalent.
\begin{enumerate}
\item There exist \textbf{unique} non-negative functions $\Gamma_1$,
  $\Gamma_2$ which can be written as the sum of the squared moduli of
  two variable polynomials such that
\begin{equation} \label{generalsos}
|q(z,w)|^2 - |\refl{q}(z,w)|^2 = (1-|z|^2)\Gamma_1(z,w) +
 (1-|w|^2)\Gamma_2(z,w). 
\end{equation}

\item There are \textbf{no} nonzero polynomials $f \in \mathbb{C}[z,w]$
  with degree at most $(n-1,m-1)$ such that
\[
\frac{f}{q} \in L^2(\mathbb{T}^2).
\]

\item There exist vector polynomials $\mathbf{E} \in \mathbb{C}^{n}
[z,w]$ of degree $(n-1,m)$ and $\mathbf{F} \in \mathbb{C}^{m}[z,w]$ of
degree $(n,m-1)$ satisfying
\[
|q(z,w)|^2 - |\refl{q}(z,w)|^2 = (1-|z|^2)|\mathbf{E}(z,w)|^2 +
 (1-|w|^2)|\mathbf{F}(z,w)|^2 
\]
that are \textbf{symmetric} in the sense that:
\[
\mathbf{E}(z,w) = z^{n-1}w^m\overline{\mathbf{E}(1/\bar{z},
1/\bar{w})} \text{ and } \mathbf{F}(z,w) = z^n w^{m-1}
\overline{\mathbf{F}(1/\bar{z}, 1/\bar{w})}
\]
and 
\[
\det E(w) \text{ and } \det F(z)
\]
have all of their zeros on the circle $\mathbb{T}$ where $E \in
\mathbb{C}^{n\times n}[w]$ and $F\in \mathbb{C}^{m\times m}[z]$ are
matrix polynomials described by
\[
\mathbf{E}(z,w) = E(w) \mathbf{\Lambda}_n(z) \text{ and }
\mathbf{F}(z,w) = F(z) \mathbf{\Lambda}_m(w).
\]
\end{enumerate}
\end{theorem}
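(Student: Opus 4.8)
The plan is to run the cycle $(2)\Rightarrow(3)\Rightarrow(1)\Rightarrow(2)$, with Theorem \ref{mainthmfull} and the reproducing-kernel construction behind it doing the heavy lifting, all applied to the Bernstein-Szeg\H{o} measure $d\mu := \frac{|dz|\,|dw|}{(2\pi)^2|q(z,w)|^2}$ on $\mathbb{T}^2$. Since $q$ is atoral its zero set meets $\mathbb{T}^2$ in a null set, so $\mu$ is a genuine positive Borel measure of possibly infinite mass, and $\mathcal{I}_\mu:=\{f\in\mathbb{C}[z,w]: f\in L^2(\mu)\}=\{f: f/q\in L^2(\mathbb{T}^2)\}$ is an ideal; condition (2) says $\mathcal{I}_\mu$ contains no nonzero polynomial of degree $\le(n-1,m-1)$. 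I would begin with reflection bookkeeping: for an $n\times n$ matrix polynomial $E$ of $w$-degree $\le m$ one has $\refl{E}(w)=w^m\overline{E(1/\bar w)}J_n$ (with $J_n$ the flip matrix), hence $\det\refl{E}(w)=\pm w^{mn}\overline{\det E(1/\bar w)}$; so ``$\mathbf{E}$ has $w$-degree $m$ and $\det E$ of degree $mn$ with all zeros on $\mathbb{T}$'' is equivalent to ``$\refl{\mathbf{E}}=U\mathbf{E}$ for a constant unitary $U$,'' in which case applying the reflection twice forces $U=U^t$, and a Takagi factorization $U=QQ^{t}$ lets us replace $\mathbf{E}$ by $Q^{t}\mathbf{E}$ to make the symmetry exact. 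The analogous statement holds for $\mathbf{F}$, $z$, $n$ after translating through $z^{n}\overline{F(1/\bar z)}$.

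Next comes a rigidity observation essentially contained in Theorem \ref{mainthmfull}: restricting \eqref{generalsos} to $\{w\in\mathbb{T}\}$ gives $(1-|z|^2)|\mathbf{E}(z,w)|^2 = |q(z,w)|^2-|\refl{q}(z,w)|^2$, and since $|\mathbf{E}(z,w)|^2 = \bLam_n(z)^{*}\big(E(w)^{*}E(w)\big)\bLam_n(z)$ while the functions $\{\bar z^{i}z^{j}\}_{0\le i,j<n}$ are linearly independent on $\mathbb{C}$, the Hermitian matrix $E(w)^{*}E(w)$ is uniquely determined by $q$ for every $w\in\mathbb{T}$ (and likewise $F(z)^{*}F(z)$ for $z\in\mathbb{T}$). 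Thus every minimal decomposition $\Gamma_1=|\mathbf{E}|^2,\ \Gamma_2=|\mathbf{F}|^2$ exhibits $\mathbf{E},\mathbf{F}$ as matrix Fej\'er-Riesz factors of these fixed positive matrix polynomials. The factor whose determinant vanishes only outside $\mathbb{D}$ is the $(\mathbf{E}_-,\mathbf{F}_-)$ of Theorem \ref{mainthmfull}, unique up to a constant unitary by item (4); reflecting the whole identity \eqref{mainsos} (replace $(z,w)$ by $(1/\bar z,1/\bar w)$ and multiply by $-|z^nw^m|^2$) produces a second solution $(\mathbf{E}_+,\mathbf{F}_+)=(\refl{\mathbf{E}_-},\refl{\mathbf{F}_-})$ whose determinants vanish only in $\overline{\mathbb{D}}$, and when $\det E_-$ has all zeros on $\mathbb{T}$ these two coincide and the Fej\'er-Riesz factorization becomes rigid (any polynomial factor of the same degree then differs by a constant unitary). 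Combining: the uniqueness of $(\Gamma_1,\Gamma_2)$ in (1) holds iff $(\mathbf{E}_-,\mathbf{F}_-)\simeq(\mathbf{E}_+,\mathbf{F}_+)$ up to unitary, iff (by the computation above) $\det E_-$ and $\det F_-$ have all zeros on $\mathbb{T}$ — and in that case the Takagi normalization of the first paragraph converts $(\mathbf{E}_-,\mathbf{F}_-)$ into an exactly symmetric solution, giving (3). So both $(2)\Rightarrow(3)$ and $(1)\Leftrightarrow(3)$ reduce to the single equivalence $(\ast)$: $\mathcal{I}_\mu$ has no nonzero element of degree $\le(n-1,m-1)$ iff $\det E_-$ (equivalently $\det F_-$) has all zeros on $\mathbb{T}$.

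Since $\det E_-$ has no zeros in $\mathbb{D}$ by item (1) of Theorem \ref{mainthm}, $(\ast)$ amounts to: $\det E_-$ acquires a zero at some $|w_0|>1$ precisely when $\mathcal{I}_\mu$ is large. For ``$|w_0|>1\Rightarrow\neg(2)$'' I would pick $0\ne v\in\ker E_-(w_0)$ and unwind the Christoffel-Darboux structure of $\mathbf{E}_-$ (the fact that $E_-(w)^{*}E_-(w)$ is the reproducing-kernel matrix of the space of $z$-degree-$<n$ polynomials in $L^2(\mu)$) to build from $v$ an explicit nonzero $f$ of degree $\le(n-1,m-1)$ with $f/q\in L^2(\mathbb{T}^2)$, the square-integrability coming from the forced vanishing of $f$ against the zeros of $q$, exactly as in the closed-bidisk computation of \cite{GW04} but now carried out when $\mu(\mathbb{T}^2)=\infty$. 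The reverse implication, ``$\exists\,0\ne f\in\mathcal{I}_\mu$ of degree $\le(n-1,m-1)\Rightarrow\det E_-$ has a zero off $\mathbb{T}$ (equivalently $\neg(1)$),'' is the crux and the step I expect to be hardest: one must feed $f$ (and its reflection, using $|\refl{f}|=|f|$ on $\mathbb{T}^2$) into the subspace-of-polynomials machinery of \cite{gK08a} to produce a nontrivial vector in the closure of the $z$-degree-$<n$ polynomials in $L^2(\mu)$ orthogonal to its finite-dimensional part, which splits a nonconstant inner factor off $E_-(w)$ — equivalently, yields a second, unitarily inequivalent Fej\'er-Riesz factorization and hence a distinct pair $(\Gamma_1',\Gamma_2')$. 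It is exactly here that infinite mass (no finite moments, hence no doubly Toeplitz matrices) makes the ``ideal of square-integrable polynomials'' viewpoint indispensable and pushes the argument beyond \cite{GW04}. Running $(\ast)$ for both $E_-$ and $F_-$ then closes the cycle: (2) forces all zeros onto $\mathbb{T}$, yielding (3) and (1); $\neg(2)$ produces a zero off $\mathbb{T}$, hence $\neg(1)$, and also $\neg(3)$, since by the rigidity step any symmetric solution would have to agree up to unitary with $\mathbf{E}_-$ and so drag $\det E_-$'s zeros onto $\mathbb{T}$, a contradiction.
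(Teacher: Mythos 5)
Your overall scaffolding is sound and, in places, matches the paper (the Bernstein--Szeg\H{o} measure, Theorem \ref{mainthmfull}, the reflection-plus-Takagi normalization to pass from ``symmetric up to a constant unitary'' to genuinely symmetric, and the rigidity step for $(3)\Rightarrow(1)$, which is exactly the paper's use of Lemma \ref{hardunique}). But the proof has a genuine gap at its pivot: the equivalence you label $(\ast)$, connecting item (2) (no nonzero $f$ of degree at most $(n-1,m-1)$ with $f/q\in L^2(\mathbb{T}^2)$, i.e.\ $\gkboxsm_\mu=\{0\}$) with the location of the zeros of $\det E_-$ and $\det F_-$, is never actually proved. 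You sketch one direction as ``unwind the Christoffel--Darboux structure as in \cite{GW04}'' and explicitly defer the other (``the crux and the step I expect to be hardest'') to an unspecified splitting-off of an inner factor from $E_-(w)$. Since every implication in your cycle routes through $(\ast)$, the argument as written does not close.

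The missing idea is the elementary kernel identity the paper uses, namely
\[
K\gkboxrperpup_\mu\bigl((z,w),(z,w)\bigr)-K\gkboxrperpdn_\mu\bigl((z,w),(z,w)\bigr)=(1-|w|^2)\,K\gkboxsm_\mu\bigl((z,w),(z,w)\bigr),
\]
together with the observation that $\gkboxrperpdn_\mu$ and $\gkboxrperpup_\mu$ are reflections of one another (so when $\gkboxsm_\mu=\{0\}$ both equal $\gkboxr_\mu$, and $\refl{\mathbf{E}}$ is again an orthonormal basis of the \emph{same} space). This identity does all the work you are missing: (1)$\Rightarrow$(2) follows because Corollary \ref{bernszegsos} already provides \emph{two} kernel decompositions (one with $\gkboxrperpup_\mu,\gkboxuperplt_\mu$, one with $\gkboxrperpdn_\mu,\gkboxuperprt_\mu$), so uniqueness forces the two kernels to agree and hence $K\gkboxsm_\mu\equiv 0$; and (2)$\Rightarrow$ the symmetry and on-circle zeros in (3) follows because equality of the subspaces gives $|\mathbf{E}|^2=|\refl{\mathbf{E}}|^2$, Lemma \ref{simpleunique} gives the unitary $U$ with $U\mathbf{E}=\refl{\mathbf{E}}$, and Proposition \ref{fullrankprop} applied to the space and its reflection puts all zeros of $\det E$ (and $\det F$) on $\mathbb{T}$. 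In fact the direction of $(\ast)$ you found hardest is a two-line consequence of this identity combined with your own rigidity observation: if $\det E_-$ had all zeros on $\mathbb{T}$ then $\refl{\mathbf{E}_-}$ also satisfies the invertibility hypothesis, Lemma \ref{uniquenesslemma} forces $|\mathbf{E}_-|^2=|\refl{\mathbf{E}_-}|^2$, and the identity then kills $K\gkboxsm_\mu$, contradicting $\gkboxsm_\mu\neq\{0\}$; so no new GW04-style construction (and no splitting of inner factors) is needed. Without importing this identity, your $(\ast)$ remains a program rather than a proof.
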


The polynomial $q(z,w) = 2-z-w$ from Example \ref{easyexample1} has a
unique sums of squares decomposition, since the decomposition we gave
satisfies item (3) above (after multiplying by a suitable unimodular
constant).  Item (2) above says that the polynomials with a unique
decomposition must in some sense have as many zeros as possible on the
torus.  Because of this, polynomials with no zeros on the closed
bidisk never have unique decompositions unless they are one variable
polynomials.

\begin{corollary} \label{uniquedecompcor} 
If $q \in \mathbb{C}[z,w]$ has no zeros on the closed bidisk
  $\cbidisk$, then $q$ has a unique sums of squares decomposition if
  and only if $q$ is a function of only one variable (i.e. one of
  $q$'s partial derivatives vanishes identically).
\end{corollary}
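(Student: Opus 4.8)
The plan is to read the corollary off the equivalence (1)$\Leftrightarrow$(2) in Theorem \ref{uniquedecompthm}. The first point to check is that its hypotheses are met: if $q$ has no zeros on $\cbidisk$, then in particular $q$ has no zeros on $\mathbb{D}^2$ and no zeros on $\mathbb{T}^2$, so $q$ is atoral (trivially, it has \emph{no} zeros on the torus) and Theorem \ref{uniquedecompthm} applies with the actual bidegree $(n,m)$ of $q$. The second preliminary observation is a translation of the hypothesis: $q$ is a function of one variable, i.e.\ $\partial q/\partial w \equiv 0$ or $\partial q/\partial z\equiv 0$, precisely when $m=0$ or $n=0$. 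So the corollary amounts to the statement that, for $q$ zero-free on $\cbidisk$, $q$ has a unique sums of squares decomposition if and only if $\min\{n,m\}=0$.

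For the ``if'' direction, suppose without loss of generality that $m=0$. Then there is no nonzero polynomial $f\in\mathbb{C}[z,w]$ of degree at most $(n-1,m-1)=(n-1,-1)$, since having degree at most $-1$ in $w$ forces $f\equiv 0$. Hence condition (2) of Theorem \ref{uniquedecompthm} holds vacuously, and therefore condition (1) holds: $q$ has a unique decomposition. For the ``only if'' direction I would argue the contrapositive. Suppose $q$ depends on both variables, so $n\ge 1$ and $m\ge 1$. Since $q$ is continuous and zero-free on the compact set $\mathbb{T}^2$, the function $1/q$ is bounded there, so $1/q\in L^\infty(\mathbb{T}^2)\subseteq L^2(\mathbb{T}^2)$. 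Taking $f\equiv 1$, which has degree $(0,0)\le(n-1,m-1)$ because $n,m\ge 1$, we obtain a nonzero polynomial with $f/q\in L^2(\mathbb{T}^2)$, so condition (2) of Theorem \ref{uniquedecompthm} fails; by the equivalence, condition (1) fails and $q$ has no unique decomposition.

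The main obstacle here is essentially nonexistent once Theorem \ref{uniquedecompthm} is available; all the real work is in that theorem. The only steps requiring any care are bookkeeping ones: confirming that ``no zeros on $\cbidisk$'' genuinely places $q$ in the scope of Theorem \ref{uniquedecompthm}, and correctly reading the phrase ``degree at most $(n-1,m-1)$'' when one of $n,m$ vanishes, so that the one-variable polynomials are exactly the ones for which condition (2) is satisfied trivially. It is also worth remarking at the end why the result looks reasonable: condition (2) says that a polynomial with a unique decomposition must carry ``as many zeros as possible'' on $\mathbb{T}^2$ (so that $q$ cannot divide any $f/1$ pattern of low degree in $L^2$), whereas a polynomial zero-free on the closed bidisk carries none, and the only way to reconcile these is to have one of the two degrees collapse to zero.
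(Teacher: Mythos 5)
Your ``only if'' half is exactly the paper's argument: when $n,m\ge 1$ the function $1/q$ is bounded on $\mathbb{T}^2$, so $f\equiv 1$ witnesses the failure of condition (2) of Theorem \ref{uniquedecompthm}, hence uniqueness fails. That part is fine.

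The gap is in your ``if'' half. You invoke the equivalence (1)$\Leftrightarrow$(2) of Theorem \ref{uniquedecompthm} in the degenerate case $m=0$ (or $n=0$), reading condition (2) as vacuously true because ``degree at most $(n-1,-1)$'' forces $f\equiv 0$. But the theorem and all of the machinery behind it --- the subspaces of Notation \ref{boxnotation}, Lemma \ref{dimensionlemma} ($\dim$ of the edge spaces equals $n$ and $m$), Corollary \ref{bernszegsos}, the Bernstein--Szeg\H{o} analysis --- are developed under the standing convention of Remark \ref{fixnm} that $n$ and $m$ are \emph{positive} integers; the paper's own proof of this corollary explicitly flags that ``it is implicit in most of this paper that $n,m>0$'' and deliberately does not apply Theorem \ref{uniquedecompthm} when one degree vanishes. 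So as written, your uniqueness direction rests on an instance of the theorem that is never established (one would have to re-check that the whole apparatus degenerates correctly when, e.g., $\gkboxu=\gkboxsm=\{0\}$ and $\mathbf{F}$ is an empty vector). The paper avoids this by a short direct argument in the one-variable case: if $m=0$, then in any decomposition $|q(z)|^2-|\refl{q}(z)|^2=(1-|z|^2)\Gamma_1+(1-|w|^2)\Gamma_2$, setting $|z|=1$ forces $\Gamma_2\equiv 0$ (a sum of squared moduli of polynomials vanishing on $\mathbb{T}\times\mathbb{C}$ vanishes identically), and then $\Gamma_1=\bigl(|q(z)|^2-|\refl{q}(z)|^2\bigr)/(1-|z|^2)$ is simply solved for, as in the one-variable Christoffel--Darboux formula. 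Either supply such a direct argument for the one-variable case, or verify explicitly that Theorem \ref{uniquedecompthm} remains valid when $\min\{n,m\}=0$; without one of these, the ``if'' direction is not yet proved.
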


It would be interesting to have a parametrization of the polynomials
in Theorem \ref{uniquedecompthm}. Both Theorem \ref{uniquedecompthm}
and Corollary \ref{uniquedecompcor} are proved in Section
\ref{sec:uniquedecomp}.  

Next, we discuss our applications: two variable Fej\'er-Riesz
factorizations in Section \ref{sec:Fejer}, distinguished varieties in
Section \ref{distvar}, and Agler's Pick interpolation theorem on the
bidisk in Section \ref{sec:agler}. 

The classical Fej\'er-Riesz theorem says that a non-negative one
variable trigonometric polynomial $t$ can be factored as $|p(z)|^2$
where $p \in \mathbb{C}[z]$ has no zeros in the disk $\mathbb{D}$.  It
is false that all non-negative \emph{two} variable trig polynomials
can be factored as $|p(z,w)|^2$ where $p\in \mathbb{C}[z,w]$ has no
zeros on the bidisk.  Indeed, Geronimo and Woerdeman give a
characterization of which \emph{strictly positive} trig polynomials
have a ``Fej\'er-Riesz type factorization'' in \cite{GW04}.  In
Section \ref{sec:Fejer} we give a self-contained proof of Geronimo and
Woerdeman's characterization.  We generalize the characterization to
include non-negative trig polynomials of a certain form.  We also
discuss the importance of the notions of toral and atoral in studying
Fej\'er-Riesz factorizations.

In Section \ref{distvar} we discuss a ``bounded analytic extension
  theorem for polynomials on distinguished varieties.'' We introduce
  this topic now via an example. A \emph{distinguished variety} is a
  special curve in $\mathbb{C}^2$ that exits the bidisk through the
  distinguished boundary.

\begin{example} Consider the following reducible variety in
  $\mathbb{C}^2$
\[
V = \{(z,w) \in \mathbb{C}^2: (z-w)(z^2 - w) = 0\}.
\]
Of interest is the portion of $V$ in the bidisk $V\cap \mathbb{D}^2$
and the analytic functions on $V\cap\mathbb{D}^2$.  The curve $V$ is
an example of a distinguished variety. Like all distinguished
varieties it has a ``determinantal representation'' of the following
form:
\[
V\cap \mathbb{D}^2 = \{(z,w) \in \mathbb{D}^2: \det(wI-\Phi(z) )=0\}
\]
where $\Phi$ is a rational matrix valued inner function.  In this case
$\Phi$ can be taken to be the $2\times 2$ matrix function
\[
\Phi(z) = \frac{1}{2} \begin{pmatrix} z(1+z) & z^2(1-z) \\ (1-z) &
  z(1+z) \end{pmatrix}
\]
and saying $\Phi$ is inner just means
\[
\Phi(z)\Phi(z)^* = I_2 \quad \text{ for } z \in \mathbb{T}.
\]
As can easily be checked
\[
\det(wI_2 - \Phi(z)) = w^2 - zw - z^2w + z^3 = (w-z)(w-z^2).
\]
This example, while simple, is instructive because it has a
singularity at the origin and more importantly a singularity on the
torus at the point $(1,1)$.

From the work in Knese \cite{gK08b}, we can associate to $V$ a
polynomial with no zeros on the bidisk and a single zero on
$\mathbb{T}^2$ at $(1,1)$ and use our sums of squares decomposition to
provide extra details about this determinantal representation.
Namely, $V$ is defined as the zero set of $p(z,w) = w^2-zw-z^2w+z^3$,
and if we define
\[
q(z,w) = z^3p(1/z,w) = w^2z^3-z^2w-zw+1
\]
then 
\[
\frac{\partial q}{\partial w} = 2wz^3-z^2-z \text{ and}
\]
\[
\refl{\frac{\partial q}{\partial w}} = 2-zw-z^2w
\]
is the associated polynomial with no zeros on $\mathbb{D}^2$ and a
single zero on $\mathbb{T}^2$ at $(1,1)$.  This is the polynomial from
Example \ref{hardexample}, and as we have seen, there are many ways
to write a sums of squares formula for it.  Our main theorem, Theorem
\ref{mainthm}, guarantees that it has a decomposition with certain
extra invertibility constraints.  

Leaving out the details, we can prove $\Phi$ has a ``polynomial
eigenvector'' $\mathbf{Q}(z,w)$. By this we mean
\[
\Phi(z)\mathbf{Q}(z,w) = w\mathbf{Q}(z,w)
\]
for all $(z,w) \in V$.  In this example, we can take
\[
\mathbf{Q}(z,w) = \begin{pmatrix} 2w - z -z^2 \\ 1-z \end{pmatrix}
\]
and this $\mathbf{Q}$ has the special property that when we write
\[
\mathbf{Q}(z,w) = Q(z) \begin{pmatrix} 1 \\ w \end{pmatrix}
\]
where
\[
Q(z) = \begin{pmatrix} -z-z^2 & 2 \\ 1-z & 0 \end{pmatrix}
\]
we have that $Q(z)$ is invertible in
$\overline{\mathbb{D}}\setminus\{1\}$.  This is significant because we
can prove an analytic extension theorem using this as follows.

Let $f\in \mathbb{C}[z,w]$ which we think of as a function on $V$.
Then, the rational function
\[
F(z,w) = (1,0) Q(z)^{-1}f(zI,\Phi(z))\mathbf{Q}(z,w)
\]
agrees with $f$ on $V$ because $\mathbf{Q}$ is a polynomial
eigenvector for $\Phi$ on $V$.  Furthermore, the size of $F$ on the
bidisk can be estimated purely in terms of a fixed rational function
of $z$ and the supremum of $f$ on $V\cap\mathbb{D}^2$.

Indeed, 
\[
\begin{aligned}
|F(z,w)| &\leq |(1,0)Q(z)^{-1}||\mathbf{Q}(z,w)| \sup_{V\cap\mathbb{D}^2} |f| \\
& \leq \sqrt{1+ \frac{16}{|1-z|^2}} \sup_{V\cap\mathbb{D}^2} |f|.
\end{aligned}
\]
\EOEx
\end{example}

More generally, we have the following theorem; a more detailed version
is presented in Section \ref{distvar} as Theorem \ref{extendthm}.

\begin{theorem} \label{extendthmsmall} Let $V\subset \mathbb{C}^2$ be
  a distinguished variety.  Then, there is a rational function of $z$,
  $C(z)$, with no poles in $\mathbb{D}$, such that for every $f \in
  \mathbb{C}[z,w]$, there is a rational function $F\in
  \mathbb{C}(z,w)$, holomorphic on $\mathbb{D}^2$, which agrees with
  $f$ on $V$:
\[
F(z,w) = f(z,w) \text{ for all } (z,w) \in V\cap \mathbb{D}^2
\]
and satisfies the estimate
\[
|F(z,w)| \leq |C(z)| \sup_{V\cap\mathbb{D}^2}|f|
\]
for all $(z,w) \in \mathbb{D}^2$.

If $V$ has no singularities on $\mathbb{T}^2$, $C(z)$ can be taken to
be a constant.
\end{theorem}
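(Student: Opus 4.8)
The plan is to reduce the statement to the sums of squares decomposition of Theorem \ref{mainthm} applied to a polynomial naturally attached to $V$. First I would recall from Knese \cite{gK08b} that a distinguished variety $V$ has a determinantal representation $V\cap\mathbb{D}^2 = \{(z,w)\in\mathbb{D}^2 : \det(wI_m - \Phi(z)) = 0\}$ where $\Phi$ is a rational $m\times m$ matrix inner function, and that one may produce a polynomial $q\in\mathbb{C}[z,w]$ of some bidegree $(n,m)$ with no zeros on $\mathbb{D}^2$, with only finitely many zeros on $\mathbb{T}^2$ (corresponding to singularities of $V$ on the torus), whose reflection is essentially $\det(wI_m-\Phi(z))$ up to clearing denominators; concretely $\refl{q}$ cuts out $V\cap\mathbb{D}^2$. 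Passing from $V$ to this $q$ is the geometric input; once we have $q$, everything is algebra coming from the sums of squares formula.

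Next I would apply Theorem \ref{mainthm} to $q$ to obtain $\mathbf{E}(z,w) = E(w)\bLam_n(z)$ and $\mathbf{F}(z,w) = F(z)\bLam_m(w)$ satisfying \eqref{mainsos}, with $E(w)$ invertible for $w\in\mathbb{D}$ and $z^n\overline{F(1/\bar z)}$ invertible for $z\in\mathbb{D}$. The key step is to extract from $\mathbf{F}$ a \emph{polynomial eigenvector} $\mathbf{Q}(z,w) = Q(z)\bLam_m(w)$ for $\Phi$ on $V$, i.e. $\Phi(z)\mathbf{Q}(z,w) = w\mathbf{Q}(z,w)$ for $(z,w)\in V$, with $Q(z)$ invertible on $\mathbb{D}$ (and on $\overline{\mathbb{D}}$ away from the finitely many torus singularities). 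The mechanism: on $V$ we have $\refl{q}(z,w)=0$, so the right side of the reflected form of \eqref{mainsos} forces a relation; restricting \eqref{mainsos} to $|z|=1$ and using that $\Phi$ is inner one relates the column span of $F(z)$ to the eigenspace of $\Phi(z)$ for the eigenvalue $w$, and this relation, being polynomial, extends off the torus. The invertibility of $z^n\overline{F(1/\bar z)}$ on $\mathbb{D}$ from Theorem \ref{mainthm}(2) is exactly what guarantees $Q(z)^{-1}$ has no poles in $\mathbb{D}$; this is where the uniqueness/invertibility refinement of the sums of squares formula (as opposed to the bare Cole-Wermer version) is essential, and is the reason the argument works for singular $V$.

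With $\mathbf{Q}$ and $Q$ in hand I would set
\[
F(z,w) = e_1^t Q(z)^{-1} f(zI_m, \Phi(z)) \mathbf{Q}(z,w),
\]
where $e_1 = (1,0,\dots,0)^t$ and $f(zI_m,\Phi(z))$ is the matrix obtained by substituting the commuting pair $(zI_m,\Phi(z))$ into the polynomial $f$. Since $\mathbf{Q}(z,w)$ is an eigenvector of $\Phi(z)$ with eigenvalue $w$ on $V$, we get $f(zI_m,\Phi(z))\mathbf{Q}(z,w) = f(z,w)\mathbf{Q}(z,w)$ there, and $Q(z)^{-1}\mathbf{Q}(z,w) = \bLam_m(w)$ whose first entry is $1$, so $F = f$ on $V\cap\mathbb{D}^2$. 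For the estimate, on $\mathbb{D}^2$ we bound
\[
|F(z,w)| \le |e_1^t Q(z)^{-1}|\,\|f(zI_m,\Phi(z))\|\,|\mathbf{Q}(z,w)|;
\]
since $\Phi$ is inner, $\|\Phi(z)\|\le 1$ on $\mathbb{D}$, so by And\^o's inequality (or just the von Neumann inequality for the commuting contractive pair $(zI,\Phi(z))$, which holds since they are a commuting pair of contractions in two variables) $\|f(zI_m,\Phi(z))\| \le \sup_{\mathbb{D}^2}|f| = \sup_{V\cap\mathbb{D}^2}|f|$, the last equality because $V$ exits through $\mathbb{T}^2$ so the sup over $V\cap\mathbb{D}^2$ controls the polynomial. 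Finally $|\mathbf{Q}(z,w)| = |Q(z)\bLam_m(w)| \le \|Q(z)\|\,|\bLam_m(w)| \le \sqrt{m}\,\|Q(z)\|$ on $\mathbb{D}^2$, so $C(z) := \sqrt{m}\,\|Q(z)\|\,|e_1^t Q(z)^{-1}|$ is a rational function of $z$ with no poles in $\mathbb{D}$, as claimed. If $V$ has no torus singularities, then $q$ has no zeros on $\overline{\mathbb{D}^2}$, the Geronimo-Woerdeman case applies, $Q(z)^{-1}$ extends with no poles on all of $\overline{\mathbb{D}}$, and $C(z)$ is bounded on $\overline{\mathbb{D}}$, hence can be replaced by its supremum — a constant.

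The main obstacle I expect is the middle step: manufacturing the polynomial eigenvector $\mathbf{Q}$ from $\mathbf{F}$ and verifying that its associated matrix $Q(z)$ inherits invertibility on $\mathbb{D}$ from Theorem \ref{mainthm}(2). This requires carefully tracking how the determinantal data of $V$ sits inside the sums of squares formula — in particular identifying $\det F(z)$ (or $\det(z^n\overline{F(1/\bar z)})$) with a power of $z$ times $\refl{q}$ or a cofactor thereof — and is precisely the point where the singular case is more delicate than the smooth case treated in \cite{gK08b}.
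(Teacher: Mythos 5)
Your overall route is the paper's: pass from $V$ to an associated polynomial with no zeros on $\mathbb{D}^2$ and finitely many zeros on $\mathbb{T}^2$, invoke the refined decomposition to produce the eigenvector $\vec{Q}(z,w)=Q(z)\bLam_m(w)$ with $Q(z)$ invertible on $\mathbb{D}$ (and on $\overline{\mathbb{D}}$ off the torus singularities), and set $F(z,w)=(1,0,\dots,0)\,Q(z)^{-1}f(zI_m,\Phi(z))\vec{Q}(z,w)$ --- this is exactly Theorems \ref{sosdist} and \ref{extendthm}. However, your estimate contains a genuine gap. You bound $\|f(zI_m,\Phi(z))\|$ via von Neumann/And\^o for the commuting contractions $(zI_m,\Phi(z))$, which gives $\sup_{\mathbb{D}^2}|f|$, and then assert $\sup_{\mathbb{D}^2}|f|=\sup_{V\cap\mathbb{D}^2}|f|$. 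That equality is false in general: take $f$ a multiple of the defining polynomial of $V$, which vanishes identically on $V$ but not on $\mathbb{D}^2$. A bound by $\sup_{\mathbb{D}^2}|f|$ is also useless here, since the entire content of the theorem is control by the supremum over $V$ alone (otherwise $F=f$ would already suffice). The correct argument, as in Knese \cite{gK08b} (to which the paper's Theorem \ref{extendthm} defers): for all but finitely many $z\in\mathbb{T}$, $\Phi(z)$ is unitary, so $f(zI_m,\Phi(z))$ is normal and its eigenvalues are $f(z,w_j)$ with $w_j$ an eigenvalue of $\Phi(z)$, i.e.\ $(z,w_j)\in V\cap\mathbb{T}^2\subset\overline{V\cap\mathbb{D}^2}$; hence $\|f(zI_m,\Phi(z))\|\le\sup_{V\cap\mathbb{D}^2}|f|$ on $\mathbb{T}$, and the maximum principle for the analytic matrix-valued function $z\mapsto f(zI_m,\Phi(z))$ propagates this bound into $\mathbb{D}$. (Leaning on And\^o also runs against the paper's program of avoiding it, but that is stylistic; the point above is one of correctness.)

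A secondary inaccuracy: you propose applying Theorem \ref{mainthm} to a polynomial $q$ ``whose reflection cuts out $V\cap\mathbb{D}^2$.'' The polynomial naturally attached to $V$, namely $q(z,w)=z^np(1/z,w)$, is $\mathbb{T}^2$-symmetric, so $|q|^2-|\refl{q}|^2\equiv 0$ and the decomposition applied to it degenerates; and the reflection of the polynomial that is actually used does not vanish on $V$ (see the introductory example, where the associated polynomial is $2-zw-z^2w$). What the paper does in Theorem \ref{sosdist} is apply Theorem \ref{mainthmfull} to $a\refl{\frac{\partial q}{\partial z}}+b\refl{\frac{\partial q}{\partial w}}$, which has no zeros on $\cbidisk$ except at torus singularities, via the identity \eqref{lastitem}; the vector $\vec{Q}$ and the invertibility of $Q(z)$ on $\overline{\mathbb{D}}$ away from $\pi_1$ of the singular points come from the $\mathbf{F}$-part of that decomposition. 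Your sketch of the eigenvector step would need this derivative identity to get started.
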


Finally, in Section \ref{sec:agler} we give a new proof of necessity
in Agler's Pick interpolation via the sums of squares formula.  

\section{Outline of what follows}
In the next section, we present some background lemmas on sums of
squares decompositions.  We also prove the uniqueness portion of the
main theorem.  After that we dive into the details of the paper.  In
Section \ref{prelim}, we present most of the notation and machinery
for the paper.  Much of the paper involves getting into the
intricacies of subspaces of two variable polynomials.  We have found
that a pictorial notation introduced in Knese \cite{gK08a} is useful
for thinking about these subspaces, although we admit it takes some
getting used to.

In Section \ref{genprop} we study two variable orthogonal polynomials
on the torus $\mathbb{T}^2$ with respect to a positive Borel measure
$\mu$, which may have infinite mass.  We present a two variable
Christoffel-Darboux formula with a certain ``error term,'' which
prevents it from being a straightforward generalization of the
Christoffel-Darboux formula for orthogonal polynomials on the unit
circle.  This error term disappears when $\mu$ satisfies an added
orthogonality condition.  In Section \ref{sec:ao}, we explore the
implications of this ``orthogonality condition'' and prove a spectral
matching result for orthogonal polynomials on $\mathbb{T}^2$. (In the
context of \emph{probability} measures, all of this was done by
Geronimo and Woerdeman in \cite{GW04}.) In Section \ref{sec:BernSzeg},
we go in the reverse and prove a special class of positive Borel
measures satisfy this orthogonality condition.  The class consists of
Lebesgue measure on $\mathbb{T}^2$ weighted with $1/|p|^2$ where $p$
is a polynomial with no zeros on $\mathbb{D}^2$ and finitely many
zeros on $\mathbb{T}^2$; the so-called Bernstein-Szeg\H{o} measures.

So, the main theorem follows by taking a polynomial with no zeros on
the bidisk, defining a Bernstein-Szeg\H{o} measure, observing that it
satisfies the aforementioned ``orthogonality condition,'' and then
writing down a detailed Christoffel-Darboux formula, which is our
desired sums of squares formula.  This is spelled out in Section
\ref{sec:mainthm}.

In Sections \ref{sec:uniquedecomp}, \ref{sec:Fejer}, \ref{distvar},
and \ref{sec:agler} we present applications (discussed
above). Finally, we conclude with some general questions in Section
\ref{questions}. There is a notational index at the end of the paper. 

\section{Sums of squares and uniqueness}
In this section we present several lemmas on sums of squares
decompositions.  Lemma \ref{uniquenesslemma} proves uniqueness in
Theorem \ref{mainthm}, namely item (4).  

The following theorem can be found in D'Angelo \cite{jD93}.
\begin{theorem}[Polarization for holomorphic functions] 
Let $\Omega$ be a domain in $\mathbb{C}^N$ and set $\Omega^* = \{\bar{z} =
(\bar{z}_1,\dots, \bar{z}_N): z \in \Omega\}$. If $f:\Omega \times
\Omega^* \to \mathbb{C}$ is a holomorphic function with the property
that
\[
f(z,\bar{z}) = 0 \text{ for all } z \in \Omega
\]
then
\[
f(z,w) = 0 \text{ for all } (z,w) \in \Omega\times \Omega^*.
\]
\end{theorem}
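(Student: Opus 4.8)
The plan is to reduce the multivariable statement to the familiar one-variable polarization principle by freezing all but one complex variable at a time, then iterating. First I would fix a point $a \in \Omega$ and, for each index $k$, consider the slice obtained by letting only the $k$-th pair of variables $(z_k, w_k)$ vary while the remaining coordinates are held at $(a_j, \bar a_j)$ for $j \neq k$. The hypothesis $f(z,\bar z) = 0$ on $\Omega$ specializes to a statement about a one-variable holomorphic function, but one must be careful: the natural object is $g(z_k) := f(a_1,\dots,z_k,\dots,a_N, \bar a_1,\dots, \overline{\phantom{x}}, \dots, \bar a_N)$ — and to apply one-variable polarization we need to recognize that, along the ``diagonal'' $w_k = \bar z_k$, this vanishes for $z_k$ in a neighborhood of $a_k$.

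The key one-variable fact I will invoke is the classical polarization lemma: if $h(z,w)$ is holomorphic in two variables on a neighborhood of $(a,\bar a)$ in $\mathbb{C}^2$ and $h(z,\bar z) = 0$ for all $z$ near $a$, then $h \equiv 0$ near $(a,\bar a)$. This follows by expanding $h$ in a double power series $\sum c_{jk}(z-a)^j (w-\bar a)^k$, substituting $w = \bar z$, writing $z - a = re^{i\theta}$, and reading off that each coefficient combination $\sum_{j+k = \ell} c_{jk} r^{j+k} e^{i(j-k)\theta}$ vanishes for all small $r$ and all $\theta$; Fourier-uniqueness in $\theta$ and then vanishing in $r$ forces every $c_{jk} = 0$. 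The main step is then an induction on the number of variables that are ``still on the diagonal'': having shown $f$ vanishes when the first $k-1$ coordinate pairs are unconstrained and the rest are pinned to $(a_j,\bar a_j)$, I hold the first $k-1$ pairs at arbitrary values in $\Omega \times \Omega^*$ (legitimate by the inductive hypothesis) and the last $N-k$ pinned, and apply the one-variable lemma in the $k$-th pair around $a_k$ to free it as well. After $N$ steps, $f$ vanishes on a full open polydisk-type neighborhood of $(a,\bar a)$ in $\Omega \times \Omega^*$.

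Finally, since $\Omega$ is a domain (connected open set), $\Omega \times \Omega^*$ is connected, and $f$ is holomorphic there; having shown $f$ vanishes on a nonempty open subset, the identity theorem for holomorphic functions of several complex variables gives $f \equiv 0$ on all of $\Omega \times \Omega^*$. The one genuine subtlety — the part I expect to require the most care — is bookkeeping the domains: when I free the $k$-th variable I must make sure the resulting point still lies in $\Omega \times \Omega^*$, which is why the induction should be phrased as ``$f$ vanishes on $U_1 \times \cdots \times U_{k-1} \times \{(a_k,\bar a_k)\} \times \cdots$'' with the $U_j$ growing to fill out $\Omega$-slices, rather than trying to free all variables simultaneously. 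Everything else is routine once the one-variable polarization lemma is in hand.
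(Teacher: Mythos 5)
The paper gives no proof of this statement (it simply cites D'Angelo), so the only question is whether your argument stands on its own. Your overall route --- a one-variable polarization lemma applied one coordinate pair at a time, followed by the identity theorem on the connected open set $\Omega\times\Omega^*$ --- is a sound and standard one, and your proof sketch of the one-variable lemma (double power series, substitute $w=\bar z$, Fourier uniqueness in $\theta$, then vanishing in $r$) is correct. The genuine gap is in the inductive step exactly as you phrased it. Your inductive hypothesis is that $f$ vanishes when the first $k-1$ pairs range over open sets $U_j\times U_j^*$ while the pairs $k,\dots,N$ are pinned at the single points $(a_j,\bar a_j)$, and you then want to apply the one-variable lemma in the $k$-th pair with the first $k-1$ pairs frozen at arbitrary, generally off-diagonal, values. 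But the one-variable lemma requires the slice function $g(z_k,w_k)$ to vanish on the set $w_k=\bar z_k$ for $z_k$ in a neighborhood of $a_k$, and nothing supplies that: the original hypothesis $f(z,\bar z)=0$ is available only when \emph{every} pair is on the diagonal, i.e. $w_j=\bar z_j$ for all $j$, which is precisely what you have given up for $j<k$; and your inductive hypothesis gives vanishing in the $k$-th pair only at the single point $(a_k,\bar a_k)$, which is useless for the lemma. So the step fails for every $k\geq 2$ (the case $k=1$ is fine, which is why the problem is easy to overlook); your closing remark that the induction ``should be phrased as $f$ vanishes on $U_1\times\cdots\times U_{k-1}\times\{(a_k,\bar a_k)\}\times\cdots$'' commits to exactly the formulation that does not work.

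The fix is the opposite bookkeeping: keep the not-yet-freed pairs on the \emph{moving} diagonal rather than pinned at the center. Fix a polydisk $D_1\times\cdots\times D_N\subset\Omega$ centered at $a$, write $D_j^*=\{\bar\zeta:\zeta\in D_j\}$, and prove by induction on $k$ that $f(z_1,\dots,z_N,w_1,\dots,w_N)=0$ whenever $z_j\in D_j$, $w_j\in D_j^*$ for $j\leq k$, while $z_j\in D_j$ and $w_j=\bar z_j$ for $j>k$. The case $k=0$ is the hypothesis; to pass from $k-1$ to $k$, freeze the first $k-1$ pairs at arbitrary points of $D_j\times D_j^*$ and freeze $z_{k+1},\dots,z_N$ in their disks with $w_j=\bar z_j$, and apply your one-variable lemma to the $k$-th pair: its required diagonal vanishing near $a_k$ is now exactly the level-$(k-1)$ statement. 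After $N$ steps $f$ vanishes on the open set $D_1\times\cdots\times D_N\times D_1^*\times\cdots\times D_N^*\subset\Omega\times\Omega^*$, and your concluding identity-theorem argument is correct as written.
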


The following lemma holds equally well for multi-variable polynomials,
and may be well known to some readers.

\begin{lemma} \label{simpleunique} Suppose $\Gamma(z)$ is a sum of
  squares of polynomials.  Then, there exists a positive integer $N$
  and polynomials $A_1, \dots, A_N \in \mathbb{C}[z]$ such that
\[
\Gamma(z) = \sum_{j=1}^N |A_j(z)|^2
\]
and if $\Gamma$ can be written as
\[
\Gamma(z) = \sum_{k=1}^M |B_k(z)|^2
\]
for some polynomials $B_1, \dots, B_M \in \mathbb{C}[z]$, then $N\leq
M$ and there exists an isometric $M\times N$ matrix $V$ (i.e. $V^{*}V
=I_N$) such that
\[
V \begin{pmatrix} A_1(z) \\ \vdots \\ A_N(z) \end{pmatrix} =
\begin{pmatrix} B_1(z) \\ \vdots \\ B_M(z) \end{pmatrix}.
\]
\end{lemma}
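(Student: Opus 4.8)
The plan is to identify the "minimal" representation $\sum_{j=1}^N |A_j|^2$ with the one coming from the reproducing kernel / Gram matrix of the finite-dimensional vector space spanned by the $A_j$'s, and then show every other representation factors through it. First I would fix any representation $\Gamma = \sum_{j=1}^M |B_k|^2$ and let $\mathcal{S} \subseteq \mathbb{C}[z]$ be the linear span of $B_1,\dots,B_M$; this is finite-dimensional, say of dimension $N$. The crucial observation is that $\mathcal{S}$ depends only on $\Gamma$, not on the chosen representation: by the polarization principle for holomorphic functions quoted above, the identity $\sum_k B_k(z)\overline{B_k(z)} = \Gamma(z)$ on (a suitable subset of) $\mathbb{C}^N$ polarizes to $\sum_k B_k(z)\overline{B_k(\bar u)} = G(z,\bar u)$ for a well-defined holomorphic $G$ (the "polynomial kernel" of $\Gamma$), independent of the representation; and $\mathcal{S}$ is exactly the span of the functions $z \mapsto G(z,\bar u)$ as $u$ ranges over the domain. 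Hence any two representations span the same space $\mathcal S$, so $N$ is an invariant of $\Gamma$ and the bound $N \le M$ is immediate.

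Next I would construct the distinguished representation: pick an orthonormal basis $A_1,\dots,A_N$ of $\mathcal S$ with respect to the inner product induced by identifying $\mathcal S$ with a subspace of $\mathbb{C}^M$ via the coordinates of any fixed representation — equivalently, diagonalize the Gram matrix. One checks that $\sum_{j=1}^N |A_j(z)|^2 = \Gamma(z)$: writing $B_k = \sum_j c_{kj} A_j$, the matrix $C = (c_{kj})$ has orthonormal columns, i.e. $C^*C = I_N$, precisely because $\{A_j\}$ is orthonormal in the metric where $\{B_k\}$ is the "standard" spanning tuple; then $\sum_k |B_k|^2 = \mathbf{A}^* C^* C\, \mathbf{A} = \sum_j |A_j|^2$, using the polarized identity to see that equality of the two sums of squared moduli as functions forces it. So $V := C$ is the desired isometric $M \times N$ matrix, and it is constant (independent of $z$) by construction.

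To finish, I would verify that the $A_j$ produced this way do not depend on which representation $\{B_k\}$ we started from, up to a unitary on $\mathbb{C}^N$: two orthonormal bases of the same inner-product space $\mathcal S$ differ by a unitary, and the inner product on $\mathcal S$ is intrinsic — it is the one for which the polynomial kernel $G(\cdot,\bar u)$ reproduces point evaluations in the appropriate sense — so the metric is the same regardless of the starting tuple. That gives the stated isometry relating any representation to the minimal one.

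The main obstacle is making the polarization step fully rigorous: one must be careful that a sum-of-squared-moduli identity $\sum |B_k(z)|^2 = \Gamma(z)$ of functions of $z \in \mathbb{C}$ (or several complex variables, in the multivariable version) really does determine the tuple $(B_k)$ up to an isometry, and for this it is cleanest to pass to the holomorphic function $f(z,\bar u) = \sum_k B_k(z)\overline{B_k(\bar u)} - G(z,\bar u)$, observe it vanishes on the diagonal, and invoke the polarization theorem to conclude $G$ is well defined independently of the representation. Once $G$ is in hand, everything else — that $\mathcal S = \operatorname{span}\{G(\cdot,\bar u)\}$, that the induced inner product is representation-independent, and that changing representations amounts to a unitary change of orthonormal basis — is linear algebra. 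The only genuinely infinite-dimensional-flavoured point, that $\mathcal S$ is finite-dimensional, is automatic since $\mathcal S$ sits inside the span of finitely many polynomials.
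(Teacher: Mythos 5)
Your proposal is correct, but it takes a genuinely different (dual) route from the paper. The paper works entirely on the value side: from a representation $\Gamma=\sum_j|C_j|^2$ it forms $\mathbf{C}(z)=(C_1(z),\dots,C_L(z))^t$, sets $N=\dim\operatorname{span}\{\mathbf{C}(z):z\in\mathbb{C}\}\subset\mathbb{C}^L$, compresses by an isometry to get $\mathbf{A}=U\mathbf{C}$ with $|\mathbf{A}|^2=\Gamma$, and then, after polarizing to $\langle\mathbf{B}(z),\mathbf{B}(Z)\rangle=\langle\mathbf{A}(z),\mathbf{A}(Z)\rangle$, defines $V$ directly on the spanning vectors by $\mathbf{A}(z_j)\mapsto\mathbf{B}(z_j)$; the polarized identity makes this map well defined and isometric (a lurking-isometry argument), and no inner product is ever placed on a space of polynomials. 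You instead work on the function side: $N=\dim\operatorname{span}\{B_k\}\subset\mathbb{C}[z]$ (the same number, since both quantities equal the rank of the coefficient matrix of the tuple), you show the span and an inner product on it are intrinsic to $\Gamma$ via the polarized kernel $G(z,u)=\sum_k B_k(z)\overline{B_k(u)}$, and the isometry appears as the coefficient matrix of the $B_k$'s in an orthonormal basis. This buys a transparent explanation of why everything is representation-independent, at the cost of two standard facts that you assert rather than verify: that $\operatorname{span}\{G(\cdot,u):u\in\mathbb{C}\}$ is all of $\operatorname{span}\{B_k\}$, and that the inner product induced by any representing tuple is the reproducing-kernel inner product of $G$ (equivalently, every representing tuple is a Parseval frame for that space), which is exactly what makes $C^*C=I_N$; both are short exercises, so there is no real gap. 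The paper's argument is the more elementary of the two, while yours makes the uniqueness mechanism (everything is read off from the kernel $G$) more conceptual; also note that once $C^*C=I_N$ is known, the identity $\sum_k|B_k|^2=\sum_j|A_j|^2$ is immediate, so the extra appeal to polarization at that point in your second paragraph is unnecessary.
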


\begin{proof} By assumption there exist polynomials $C_1, \dots, C_L
  \in \mathbb{C}[z]$ such that
\[
\Gamma(z) = \sum_{j=1}^L |C_j(z)|^2.
\]
Define $\mathbf{C}(z) = (C_1(z), \dots, C_L(z))^t \in \mathbb{C}^L[z]$
and let
\[
\mathcal{K} = \text{span} \{\mathbf{C}(z): z\in \mathbb{C}\}
\]
and 
\[
N = \dim \mathcal{K}. 
\]
Let $U: \mathcal{K} \to \mathbb{C}^N$ be any isometry. Of course, $U$
can be extended to all of $\mathbb{C}^L$ by mapping elements of
$\mathbb{C}^L \ominus \mathcal{K}$ to $\mathbf{0}$ and can therefore
be viewed as an $N\times L$ matrix.  Define
\[
\mathbf{A}(z) = U \mathbf{C}(z)
\]
and write $\mathbf{A}(z) = (A_1(z), \dots, A_N(z))^t$.  Then,
\[
\Gamma(z) = |\mathbf{C}(z)|^2 = |\mathbf{A}(z)|^2 = \sum_{j=1}^N
|A_j(z)|^2.
\]
Now, suppose there are polynomials $B_1, \dots, B_M \in \mathbb{C}[z]$
such that
\[
\sum_{k=1}^M |B_k(z)|^2 = \sum_{j=1}^N |A_j(z)|^2.
\]
By the polarization theorem for holomorphic functions, 
\[
\sum_{k=1}^M B_k(z)\overline{B_k(Z)} = \sum_{j=1}^N A_j(z)
\overline{A_j(Z)}
\]
for all $z, Z \in \mathbb{C}$.  This can be rewritten more compactly
as
\begin{equation} \label{polarized}
\ip{\mathbf{B}(z)}{\mathbf{B}(Z)} = \ip{\mathbf{A}(z)}{\mathbf{A}(Z)}
\end{equation}
(the inner product on the left is on $\mathbb{C}^M$ and the inner
product on the right is on $\mathbb{C}^N$).  

The map $V: \mathbb{C}^N \to \mathbb{C}^M$ which
sends
\[
\sum_{j=1}^R c_j \mathbf{A}(z_j) \mapsto \sum_{j=1}^R c_j \mathbf{B}(z_j)
\]
for any points $z_1, z_2,\dots, z_R$ and any scalars
$c_1, \dots, c_R$, is well-defined, linear, and isometric since
\[
\begin{aligned} |\sum_{j=1}^R c_j \mathbf{A}(z_j)|^2 &= \sum_{j,k} c_j\bar{c}_k
\ip{\mathbf{A}(z_j)}{\mathbf{A}(z_k)} \\
&= \sum_{j,k} c_j\bar{c}_k
\ip{\mathbf{B}(z_j)}{\mathbf{B}(z_k)} = |\sum_{j=1}^R c_j \mathbf{B}(z_j)|^2
\end{aligned}
\]
by \eqref{polarized} and since $\text{span} \{ \mathbf{A}(z): z\in
\mathbb{C}\} = \mathbb{C}^N$ by construction of $\mathbf{A}$.  So, $V$
may be thought of as an $M\times N$ isometric matrix satisfying
\[
\mathbf{B}(z) = V \mathbf{A}(z).
\]
for all $z \in \mathbb{C}$.  This implies $N\leq M$.  
\end{proof}

\begin{lemma} \label{hardunique}
Suppose $\mathbf{E} \in \mathbb{C}^n[z,w]$ has degree at most
$(n-1,m)$ and has the property that when we write 
\[
\mathbf{E}(z,w) = E(w) \bLam_n(z) \qquad E(w) \in \mathbb{C}^{n\times n}[w],
\]
the matrix polynomial $E(w)$ is invertible for all $w \in \mathbb{D}$.
Suppose further that $\mathbf{A} \in \mathbb{C}^N[z,w]$ is a two
variable matrix polynomial satisfying
\[
|\mathbf{E}(z,w)|^2 =|\mathbf{A}(z,w)|^2 \text{ for } (z,w) \in
 \mathbb{C}\times \mathbb{T}. 
\]
Then, $n\leq N$, $\mathbf{A}(z,w)$ has degree at most $n-1$ in $z$ and
there exists an $N\times n$ matrix valued rational inner function
$\Psi: \mathbb{D} \to \mathbb{C}^{N\times n}$, holomorphic on
$\mathbb{D}$ such that
\[
\mathbf{A}(z,w) = \Psi(w) \mathbf{E}(z,w).
\]
\end{lemma}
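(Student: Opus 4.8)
\textit{Proof sketch.} The plan is to freeze $w$ on the unit circle, invoke the one variable uniqueness statement (Lemma \ref{simpleunique}), and then organize the resulting pointwise isometries into a single rational inner function, the invertibility of $E$ on $\mathbb{D}$ entering only at that last step.

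First I would observe that, since $E(w)$ is invertible for $w\in\mathbb{D}$, the polynomial $\det E$ is not identically zero and hence vanishes at only finitely many points; call $w\in\mathbb{T}$ \emph{good} when $\det E(w)\ne 0$, so that the good points form a dense, cofinite subset of $\mathbb{T}$. For a fixed good $w_0$, the vector polynomials $z\mapsto\mathbf{E}(z,w_0)$ and $z\mapsto\mathbf{A}(z,w_0)$ satisfy $|\mathbf{E}(z,w_0)|^2=|\mathbf{A}(z,w_0)|^2$ for every $z\in\mathbb{C}$; read componentwise, this is an equality of two scalar sums of squares, of $n$ and of $N$ polynomials in $z$ respectively. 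Since $\mathbf{E}(z,w_0)=E(w_0)\bLam_n(z)$ with $E(w_0)$ invertible, a Vandermonde argument shows the vectors $\mathbf{E}(z,w_0)$ span all of $\mathbb{C}^n$ as $z$ varies, so $\mathbf{E}(\cdot,w_0)$ is already a minimal representation in the sense of Lemma \ref{simpleunique}. That lemma then supplies $n\le N$ together with an isometric $N\times n$ matrix $V(w_0)$, with $V(w_0)^*V(w_0)=I_n$, such that $\mathbf{A}(z,w_0)=V(w_0)\mathbf{E}(z,w_0)$ for all $z$. In particular $\mathbf{A}(z,w_0)=V(w_0)E(w_0)\bLam_n(z)$ has $z$-degree at most $n-1$; as this holds for every good $w_0$, the coefficient of $z^j$ in $\mathbf{A}$ for $j\ge n$ is a $w$-polynomial vanishing on an infinite set and is therefore identically zero. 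Hence $\mathbf{A}$ has degree at most $n-1$ in $z$, and we may write $\mathbf{A}(z,w)=A(w)\bLam_n(z)$ with $A\in\mathbb{C}^{N\times n}[w]$.

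Next I would identify the $V(w_0)$'s with a rational function. For good $w_0$, the identity $A(w_0)\bLam_n(z)=V(w_0)E(w_0)\bLam_n(z)$ in all $z$ forces $A(w_0)=V(w_0)E(w_0)$, so $V(w_0)=A(w_0)E(w_0)^{-1}$. Define $\Psi(w):=A(w)E(w)^{-1}$, a rational $N\times n$ matrix function; it is holomorphic wherever $\det E(w)\ne 0$, in particular throughout $\mathbb{D}$. At every good $w_0\in\mathbb{T}$ we have $\Psi(w_0)=V(w_0)$, hence $\Psi(w_0)^*\Psi(w_0)=I_n$ and $\|\Psi(w_0)\|=1$; since a rational matrix function cannot remain bounded near a pole lying on $\mathbb{T}$, $\Psi$ has no poles on $\mathbb{T}$ either, and being rational, holomorphic on a neighborhood of $\overline{\mathbb{D}}$, and isometric on $\mathbb{T}$, it is an $N\times n$ rational inner function. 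Finally $\mathbf{A}(z,w)=A(w)\bLam_n(z)=\Psi(w)E(w)\bLam_n(z)=\Psi(w)\mathbf{E}(z,w)$ for all good $w$ and all $z$, hence for all $(z,w)$ by continuity, which is the asserted factorization.

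The Vandermonde step and the propagation of identities from a cofinite subset of $\mathbb{T}$ are routine. The one point that genuinely uses the hypothesis — and the only place I expect any friction — is showing that the family $\{V(w_0)\}$ of isometries is the boundary trace of a function holomorphic on all of $\mathbb{D}$: this is exactly where invertibility of $E$ \emph{throughout} $\mathbb{D}$ is needed, via the explicit formula $\Psi=AE^{-1}$, since otherwise $\Psi$ could acquire poles inside the disk and would fail to be inner.
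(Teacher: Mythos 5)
Your proof is correct and follows essentially the same route as the paper: bound the $z$-degree of $\mathbf{A}$, write $\mathbf{A}(z,w)=A(w)\bLam_n(z)$, set $\Psi=AE^{-1}$, and combine the isometry relation on the circle with invertibility of $E$ throughout $\mathbb{D}$ to conclude $\Psi$ is a rational inner function with removable singularities on $\mathbb{T}$. The only cosmetic difference is that you produce the boundary isometries pointwise at the good points of $\mathbb{T}$ via Lemma \ref{simpleunique} and then propagate by density, whereas the paper polarizes in $z$ for every $w\in\mathbb{T}$ at once to get $E(w)^*E(w)=A(w)^*A(w)$ on all of $\mathbb{T}$, which avoids the good/bad point bookkeeping.
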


By ``$N\times n$ matrix valued inner function'' we mean that $\Psi$ is
isometry valued on the circle (or more appropriately, unitary valued
in the case $n=N$).

\begin{proof}
We have assumed 
\[
|\mathbf{E}(z,w)|^2 = |\mathbf{A}(z,w)|^2
\]
for all $z \in \mathbb{C}$ but $w\in \mathbb{T}$.  By the polarization
theorem for holomorphic functions
\begin{equation} \label{EApolar}
\ip{\mathbf{E}(z,w)}{\mathbf{E}(Z,w)} =
\ip{\mathbf{A}(z,w)}{\mathbf{A}(Z,w)} 
\end{equation}
for all $z,Z \in \mathbb{C}$ and $w \in \mathbb{T}$.  The left hand
side has degree at most $n-1$ in $z$ and this implies
$\mathbf{A}(z,w)$ has degree at most $n-1$ in $z$.  (If some component
with the largest degree, say $A_1(z,w)=\sum_{j=0}^{M}a_j(w) z^j$, of
$\mathbf{A}(z,w)$ has degree $M$ larger than $n-1$, then
\[
A_1(z,w)\overline{A_1(Z,w)} = |a_M(w)|^2 z^M \bar{Z}^M + \text{ lower
  order terms }
\]
and we would necessarily have $a_M(w) \equiv 0$ on $\mathbb{T}$.  This
would imply $a_M(w)\equiv 0$ for all $w \in \mathbb{C}$.) Therefore,
we may write
\[
\mathbf{A}(z,w) = A(w) \bLam_n(z)
\]
where $A(w)$ is an $N\times n$ matrix polynomial.  Rewriting
\eqref{EApolar} in matrix form we have
\[
\bLam_n(Z)^* E(w)^* E(w) \bLam_n(z) = \bLam_n(Z)^* A(w)^* A(w) \bLam_n(z)
\]
and since this holds for all $z, Z \in \mathbb{C}$ 
\begin{equation} \label{EAoncircle}
E(w)^* E(w) = A(w)^* A(w)
\end{equation}
for all $w \in \mathbb{T}$ because $\bLam_n(z)$ spans $\mathbb{C}^n$
as $z$ varies over any $n$ points.  Now define 
\[
\Psi(w) = A(w) E(w)^{-1}
\]
for $w \in \mathbb{D}$. This is a rational matrix polynomial with no
poles on the disk since $E(w)$ is invertible in the disk.  Equation
\eqref{EAoncircle} says that $\Psi(w)$ is isometric for $w \in
\mathbb{T}$.  In particular, $n\leq N$, any singularities of $\Psi$ on
the circle are removable ($\Psi$ is rational and bounded on the
circle), and by the maximum principle $\Psi$ is contraction valued in
the disk.  By definition,
\[
\mathbf{A}(z,w) = \Psi(w) \mathbf{E}(z,w)
\]
for all $z,w \in \mathbb{C}$.
\end{proof}

\begin{lemma}[Uniqueness Lemma] \label{uniquenesslemma} Suppose $\mathbf{E},
  \mathbf{\tilde{E}} \in \mathbb{C}^n[z,w]$ have degree at most
  $(n-1,m)$ and have the property that when written in terms of
  $n\times n$ matrix polynomials $E, \tilde{E} \in \mathbb{C}^{n\times
    n} [w]$ as
\[
\mathbf{E}(z,w) = E(w) \mathbf{\Lambda}_n(z)
\qquad 
\mathbf{\tilde{E}}(z,w) = \tilde{E}(w) \mathbf{\Lambda}_n(z)
\]
both $E(w)$ and $\tilde{E}(w)$ are invertible for all $w \in
\mathbb{D}$.  Suppose further that there are vector polynomials
$\mathbf{F}, \mathbf{\tilde{F}} \in \mathbb{C}^m[z,w]$ such that
\begin{align}
& (1-|z|^2) |\mathbf{E}(z,w)|^2 + (1-|w|^2) |\mathbf{F}(z,w)|^2
  \nonumber \\
&=  (1-|z|^2) |\mathbf{\tilde{E}}(z,w)|^2 +
(1-|w|^2)|\mathbf{\tilde{F}}(z,w)|^2 \label{EEtilde}
\end{align}

Then, there
exists an $n\times n$ unitary $U_1$ and an $m\times m$ unitary $U_2$
such that 
\[
\mathbf{E}(z,w) = U_1 \mathbf{\tilde{E}}(z,w) \qquad \mathbf{F}(z,w) =
U_2 \mathbf{\tilde{F}}(z,w).
\]
\end{lemma}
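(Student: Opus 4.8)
The plan is to pin down $\mathbf{E}$ up to a constant unitary using Lemma~\ref{hardunique}, and then to read off the statement for $\mathbf{F}$ by polarization. The first move is to restrict \eqref{EEtilde} to the slice where $|w|=1$: there the two $(1-|w|^2)$ terms drop out, leaving $(1-|z|^2)|\mathbf{E}(z,w)|^2=(1-|z|^2)|\mathbf{\tilde{E}}(z,w)|^2$ for all $z\in\mathbb{C}$ and $w\in\mathbb{T}$; dividing by $1-|z|^2$ when $|z|\neq1$ and then invoking continuity in $z$ gives $|\mathbf{E}(z,w)|^2=|\mathbf{\tilde{E}}(z,w)|^2$ for all $(z,w)\in\mathbb{C}\times\mathbb{T}$. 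This is exactly the hypothesis of Lemma~\ref{hardunique}, which I would apply \emph{in both directions}: once with $\mathbf{E}$ in its stated role and $\mathbf{A}=\mathbf{\tilde{E}}$, and once with the roles of $\mathbf{E}$ and $\mathbf{\tilde{E}}$ interchanged (legitimate since $E(w)$ and $\tilde{E}(w)$ are \emph{both} invertible on $\mathbb{D}$). Since $\mathbf{E}$ and $\mathbf{\tilde{E}}$ both take values in $\mathbb{C}^n$, the dimension bound coming out of the lemma is an equality each time, so the inner functions produced are square $n\times n$, i.e.\ unitary-valued on $\mathbb{T}$. We obtain rational matrix inner functions $\Psi,\Psi'\colon\mathbb{D}\to\mathbb{C}^{n\times n}$, holomorphic and (by the maximum principle, as in the proof of Lemma~\ref{hardunique}) contraction-valued on $\mathbb{D}$, with $\mathbf{\tilde{E}}(z,w)=\Psi(w)\mathbf{E}(z,w)$ and $\mathbf{E}(z,w)=\Psi'(w)\mathbf{\tilde{E}}(z,w)$.

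Next I would show $\Psi$ is a constant unitary. Composing the two intertwinings gives $\mathbf{\tilde{E}}(z,w)=\Psi(w)\Psi'(w)\mathbf{\tilde{E}}(z,w)$; writing $\mathbf{\tilde{E}}(z,w)=\tilde{E}(w)\bLam_n(z)$ and letting $z$ range over $n$ points at which $\bLam_n$ is a basis of $\mathbb{C}^n$, while $\tilde{E}(w)$ is invertible for $w\in\mathbb{D}$, we get $\Psi(w)\Psi'(w)=I_n$ on $\mathbb{D}$, so $\Psi'=\Psi^{-1}$. Hence $\Psi(w)$ and $\Psi(w)^{-1}$ are both contractions for every $w\in\mathbb{D}$, which forces $\Psi(w)$ to be unitary (from $\|\xi\|=\|\Psi(w)^{-1}\Psi(w)\xi\|\le\|\Psi(w)\xi\|\le\|\xi\|$ for every $\xi$). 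A holomorphic matrix function that is unitary-valued on a domain is constant, e.g.\ because $\Psi(w)^{-1}=\Psi(w)^{*}$ is at once holomorphic and anti-holomorphic in $w$. Thus $\Psi\equiv U_1^{-1}$ for a fixed $n\times n$ unitary $U_1$, and $\mathbf{E}(z,w)=U_1\mathbf{\tilde{E}}(z,w)$.

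Finally, for $\mathbf{F}$: since $U_1$ is a constant unitary we now have $|\mathbf{E}(z,w)|^2=|\mathbf{\tilde{E}}(z,w)|^2$ for \emph{all} $(z,w)\in\mathbb{C}^2$, so substituting into \eqref{EEtilde} and cancelling leaves $(1-|w|^2)|\mathbf{F}(z,w)|^2=(1-|w|^2)|\mathbf{\tilde{F}}(z,w)|^2$; dividing for $|w|\neq1$ and using continuity gives $|\mathbf{F}(z,w)|^2=|\mathbf{\tilde{F}}(z,w)|^2$ on all of $\mathbb{C}^2$. By the polarization theorem for holomorphic functions, $\ip{\mathbf{F}(z,w)}{\mathbf{F}(Z,W)}=\ip{\mathbf{\tilde{F}}(z,w)}{\mathbf{\tilde{F}}(Z,W)}$, so (exactly as in the proof of Lemma~\ref{simpleunique}) the assignment $\mathbf{F}(z,w)\mapsto\mathbf{\tilde{F}}(z,w)$ extends to a linear isometry from $\operatorname{span}\{\mathbf{F}(z,w)\}$ onto $\operatorname{span}\{\mathbf{\tilde{F}}(z,w)\}$, two subspaces of $\mathbb{C}^m$ of equal dimension; extending it arbitrarily to a unitary on all of $\mathbb{C}^m$ produces the required $m\times m$ unitary $U_2$ with $\mathbf{F}(z,w)=U_2\mathbf{\tilde{F}}(z,w)$. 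The one delicate point is the two-sided use of Lemma~\ref{hardunique}: a single application only yields a one-sided inner intertwiner, and it is precisely the invertibility of \emph{both} $E$ and $\tilde{E}$ throughout $\mathbb{D}$ that lets us reverse roles and collapse the intertwiner to a constant unitary rather than a genuine inner function; everything else is bookkeeping with \eqref{EEtilde} on the slices $\{|z|=1\}$ and $\{|w|=1\}$ together with polarization.
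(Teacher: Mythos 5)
Your proposal is correct and follows essentially the same route as the paper: restrict \eqref{EEtilde} to $|w|=1$, apply Lemma \ref{hardunique} in both directions to get rational inner intertwiners $\Psi_1,\Psi_2$ with $\Psi_1\Psi_2=I$, conclude they are constant unitaries (giving $U_1$), and then deduce $|\mathbf{F}|^2=|\mathbf{\tilde{F}}|^2$ everywhere and obtain $U_2$ via polarization as in Lemma \ref{simpleunique}. The extra details you supply (continuity to handle $|z|=1$, the holomorphic-plus-antiholomorphic argument for constancy, and the equal-dimension extension to a unitary on $\mathbb{C}^m$) simply flesh out steps the paper leaves implicit.
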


\begin{proof} 
  Setting $|w|=1$ in \eqref{EEtilde} and canceling the factor
  $(1-|z|^2)$ we have
\[
|\mathbf{E}(z,w)|^2 = |\mathbf{\tilde{E}}(z,w)|^2 \text{ for } (z,w)
 \in \mathbb{C}\times \mathbb{T}.
\]
Both $\mathbf{E}$ and $\mathbf{\tilde{E}}$ satisfy the conditions of
Lemma \ref{hardunique}.  Therefore, there exist $n\times n$ matrix
valued rational inner functions $\Psi_1, \Psi_2: \mathbb{D} \to
\mathbb{C}^{n\times n}$ such that
\[
\begin{aligned}
\mathbf{\tilde{E}}(z,w) &= \Psi_1(w) \mathbf{E}(z,w) \text{ and}\\
\mathbf{E}(z,w) &= \Psi_2(w) \mathbf{\tilde{E}}(z,w).
\end{aligned}
\]
This implies $\Psi_1(w) \Psi_2(w) = I$ and since $\Psi_1, \Psi_2$ are
contractive valued, this can only occur when both $\Psi_1$ and
$\Psi_2$ are constant and equal to unitary matrices.  Hence, there
exists an $n\times n$ unitary matrix $U_1$ such that 
\[
\mathbf{E}(z,w) = U_1 \mathbf{\tilde{E}}(z,w).
\]
This implies 
\[
|\mathbf{E}(z,w)|^2 = |\mathbf{\tilde{E}}(z,w)|^2
\]
for all $(z,w) \in \mathbb{C}^2$.  

In turn, by \eqref{EEtilde} we have
\[
|\mathbf{F}(z,w)|^2 = |\mathbf{\tilde{F}}(z,w)|^2
\]
for all $(z,w) \in \mathbb{C}^2$.  By Lemma \ref{simpleunique}, there
exists an $m\times m$ unitary matrix $U_2$ such that
\[
\mathbf{F}(z,w) = U_2 \mathbf{\tilde{F}}(z,w).
\]
\end{proof}

\section{Preliminaries} \label{prelim}
As in Knese \cite{gK08a}, our approach will be to study two variable
orthogonal polynomials with respect to a positive Borel measure $\mu$
on the two-torus.  The difference is that here we allow measures with
infinite mass. In particular, we study ``Bernstein-Szeg\H{o}''
measures on $\mathbb{T}^2$
\[
\frac{1}{|q(z, w)|^2} d\sigma 
\]
where $d\sigma$ is normalized Lebesgue measure on the torus:
\begin{equation} \label{Lebesgue}
d\sigma = d\sigma(z,w) = \frac{dz}{2\pi iz} \frac{dw}{2\pi iw}
\end{equation}
and $q \in \mathbb{C}[z,w]$ has finitely many zeros on $\mathbb{T}^2$
(and hence this measure can have infinite mass).  On one hand, this
causes a number of certain superficial (but still interesting) changes
in the theory. For instance, we have to deal with the ideal
$\mathbb{C}[z,w]\cap L^2(\mu)$ of polynomials in $L^2(\mu)$ as opposed
to all of $\mathbb{C}[z,w]$ when studying orthogonal polynomials.  (In
particular, studying moment matrices will not be an option, because
our measures may not have finite moments.) On the other hand, this
change forces us to take greater care in certain situations.  For
instance, if $q\in \mathbb{C}[z,w]$ has no zeros on the bidisk and
finitely many zeros on the two-torus, we \emph{cannot} say (as we
would in the case with no zeros on $\mathbb{T}^2$) that
\[
\int_{\mathbb{T}^2} \frac{1}{q(z, w)} d\sigma(z,w) = \frac{1}{q(0,0)}
\]
since $1/q$ will not be integrable.  Perhaps this integral could be
understood in a principal value sense, however we confront this issue
in our own way in Proposition \ref{qrelations}.

Let us begin to provide some details.  We shall make the following
standing assumptions
\begin{itemize}
\item $\mu$ is a positive Borel measure on $\mathbb{T}^2$,

\item the ideal 
\begin{equation} \label{def:ideal}
\mathcal{I}_\mu := L^2(\mu) \cap \mathbb{C}[z,w]
\end{equation}
 is nonempty, where elements of $\mathbb{C}[z,w]$ here are thought of
as measurable functions on $\mathbb{T}^2$,

\item the support of $\mu$ is not contained in the intersection of the
  zero set of a nonzero polynomial with the two-torus $\mathbb{T}^2$.
  This ensures that $||q||_{L^2(\mu)} \ne 0$ if $q \ne 0$.
\end{itemize}

\begin{definition} \label{def:degree}
  If $j,k$ are nonnegative integers, we say $q \in \mathbb{C}[z,w]$
  has \emph{degree $(j,k)$} and we write
\[
\deg (q) = (j,k)
\]
if $q$ has degree $j$ in $z$ and $k$ in $w$.  Also, $q$ has
\emph{degree at most $(j,k)$} if $q$ has degree at most $j$ in $z$ and
at most $k$ in $w$, in which case we write
\[
\deg (q) \leq (j,k).
\]
\end{definition}

Given $q \in \mathbb{C}[z,w]$ we use 
\begin{equation} \label{def:hat}
\hat{q}(j,k)
\end{equation}
 to denote the coefficient of $z^jw^k$ in the Fourier series of $q$.

\begin{remark} \label{fixnm} 
Throughout the article, we fix positive integers $n$ and $m$. The
notations below depend on this.
\end{remark}

We use the following notations as in Knese \cite{gK08a} which define
subspaces of polynomials based on what frequencies may appear in their
Fourier series (or in other language, we define subspaces based on the
\emph{carrier} of the polynomials).  The symbols should be thought of
a lying in the grid $\mathbb{Z}^2$ with the lower left corners
representing the origin.

\begin{notation} \label{boxnotation}
\[
\begin{aligned}
  \gkbox &:= \{q \in \mathbb{C}[z,w] : \deg (q) \leq (n,m)\} \\
  \gkboxr &:= \{ q\in \mathbb{C}[z,w] : \deg (q) \leq (n-1,m)\}\\
  \gkboxu &:= \{ q \in \mathbb{C}[z,w] : \deg (q) \leq (n,m-1) \} \\
  \gkboxsm &:= \{ q \in \mathbb{C}[z,w] : \deg (q) \leq (n-1, m-1) \} \\
  \gkboxll &:= \{ q \in \gkbox : q(0,0) = 0\}\\
  \gkboxur &:= \{ q \in \gkbox : \hat{q}(n,m) = 0 \}
\end{aligned}
\]

For any of the above subspaces (and similar variations) we shall use a
subscript $\mu$ to denote the intersection with $L^2(\mu)$.  Namely,
\[
\begin{aligned}
\gkbox_\mu &:= \gkbox \cap L^2(\mu) \\
\gkboxr_\mu &:= \gkboxr \cap L^2(\mu) \\
\gkboxu_\mu &:= \gkboxu \cap L^2(\mu), \text{ et cetera}\dots
\end{aligned}
\]
\end{notation}

We continue Example \ref{easyexample1} to make all of the above
definitions concrete.
\begin{example} \label{easyexample2} Let $q(z,w) = 2-z-w$. Let
\[
d\mu = \frac{1}{|2-z-w|^2} d\sigma(z,w) = \frac{1}{(2\pi i)^2|2-z-w|^2}
\frac{dz}{z} \frac{dw}{w}.
\]

It turns out that $\mathcal{I}_\mu = L^2(\mu) \cap \mathbb{C}[z,w]$
equals the maximal ideal $(z-1, w-1) \subset \mathbb{C}[z,w]$.  We do
not think this is obvious since
\[
\frac{z-1}{2-z-w}
\]
is unbounded in the bidisk (set $(z,w) = (t+i\sqrt{1-t},
t-i\sqrt{1-t})$ and see what happens when $t\nearrow 1$). Let us
provide some details.

\begin{claim} 
\[
1 \notin L^2(\mu) \text{ and } z-1, w-1 \in L^2(\mu)
\]
\end{claim}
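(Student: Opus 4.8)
The plan is to analyze the local behavior of the weight $|2-z-w|^{-2}$ near its only singularity on $\mathbb{T}^2$, the point $(1,1)$, since away from $(1,1)$ the weight is bounded and the torus has finite $\sigma$-mass. First I would parametrize near the singularity by writing $z = e^{i\theta}$, $w = e^{i\phi}$ with $\theta,\phi$ small, so that $2 - z - w = (1-e^{i\theta}) + (1-e^{i\phi})$, and use the estimate $1 - e^{i\theta} = -i\theta + O(\theta^2)$, hence $|2-z-w| \asymp |\theta + \phi| + O(\theta^2 + \phi^2)$ for $(\theta,\phi)$ in a small neighborhood of the origin. Actually, care is needed: $|2-z-w|$ can be small not only when $\theta+\phi$ is small but the imaginary parts matter; more precisely $2 - z - w \approx -i(\theta+\phi) + \tfrac12(\theta^2+\phi^2)$, so $|2-z-w|^2 \asymp (\theta+\phi)^2 + (\theta^2+\phi^2)^2$. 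The key point is the linear vanishing of the real part combined with the quadratic vanishing of the imaginary part transverse to the line $\theta + \phi = 0$.

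For the divergence $1\notin L^2(\mu)$: I would rotate coordinates to $u = \theta + \phi$, $v = \theta - \phi$, so that $|2-z-w|^2 \asymp u^2 + (u^2 + v^2)^2 \asymp u^2 + v^4$ on the relevant region, and then estimate
\[
\int\int_{|u|,|v| \le \delta} \frac{du\,dv}{u^2 + v^4}.
\]
Performing the $u$-integral first gives $\int_{-\delta}^{\delta} \frac{du}{u^2 + v^4} \asymp v^{-2}$ (for $|v|$ small, since the integral is $\asymp \frac{1}{v^2}\arctan(\delta/v^2)$ and $\arctan \to \pi/2$), and then $\int_{-\delta}^\delta v^{-2}\,dv = \infty$. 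This shows $\|1\|_{L^2(\mu)}^2 = \infty$. For the convergence $z-1, w-1 \in L^2(\mu)$: near $(1,1)$ we have $|z-1| \asymp |\theta| \le |u| + |v|$ and similarly $|w-1| \asymp |\phi| \le |u|+|v|$, so it suffices to show $\int\int_{|u|,|v|\le\delta} \frac{u^2 + v^2}{u^2 + v^4}\,du\,dv < \infty$. The $u^2$ numerator contributes $\int\int \frac{u^2}{u^2+v^4}\,du\,dv \le \int\int 1\,du\,dv < \infty$, and the $v^2$ numerator contributes $\int\int \frac{v^2}{u^2+v^4}\,du\,dv$; doing the $u$-integral gives $\asymp v^2 \cdot v^{-2} = O(1)$, which is integrable in $v$. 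So both $z-1$ and $w-1$ lie in $L^2(\mu)$.

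The main obstacle I anticipate is getting the two-sided comparison $|2-z-w|^2 \asymp u^2 + v^4$ rigorously on a genuine neighborhood of $(1,1)$ on the torus (as opposed to in the tangent-space coordinates), and checking the error terms in the expansion $1 - e^{i\theta} = -i\theta + \tfrac{\theta^2}{2} + O(\theta^3)$ don't destroy either the lower bound (for divergence) or the upper bound (for convergence). One clean way to handle this is to work with the exact identity $|1 - e^{i\theta}|^2 = 2 - 2\cos\theta = 4\sin^2(\theta/2)$ and expand $2 - z - w$ into real and imaginary parts exactly: $\operatorname{Re}(2-z-w) = (1-\cos\theta)+(1-\cos\phi) \asymp \theta^2 + \phi^2$ and $\operatorname{Im}(2-z-w) = -(\sin\theta + \sin\phi) \asymp \theta + \phi$ (for small angles), giving $|2-z-w|^2 \asymp (\theta+\phi)^2 + (\theta^2+\phi^2)^2$ with honest two-sided constants on $\{|\theta|,|\phi| \le \delta\}$ for $\delta$ small; then the rotation to $(u,v)$ and the scaling analysis above goes through. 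Once the local analysis is done, the global statement follows by splitting $\mathbb{T}^2$ into the small neighborhood of $(1,1)$ and its (compact) complement, on which $|2-z-w|^{-2}$ is bounded.
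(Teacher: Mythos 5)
Your argument is correct, but it proceeds quite differently from the paper. You estimate the boundary integral directly by a local real-variable analysis at the single torus zero $(1,1)$: the two-sided comparison $|2-z-w|^2 \asymp (\theta+\phi)^2 + (\theta^2+\phi^2)^2$ (which your exact real/imaginary-part expansion does justify on a small square, since $\sin\theta+\sin\phi = 2\sin\tfrac{\theta+\phi}{2}\cos\tfrac{\theta-\phi}{2}$), the rotation to $u=\theta+\phi$, $v=\theta-\phi$ reducing the weight to $\asymp (u^2+v^4)^{-1}$, and then the elementary integrals $\int\!\!\int (u^2+v^4)^{-1}\,du\,dv = \infty$ versus $\int\!\!\int (u^2+v^2)(u^2+v^4)^{-1}\,du\,dv < \infty$, with the compact complement of the neighborhood handled by boundedness of the weight. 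The paper instead computes the radial integral means over $r\mathbb{T}^2$ \emph{exactly}, using the one-variable Poisson kernel identity and a Fej\'er--Riesz-type factorization of $|2-re^{i\theta}|^2 - r^2$, obtaining the explicit values $\tfrac{2-\sqrt{1-r^2}}{4}\to\tfrac12$ and $\tfrac{1}{4\sqrt{1-r^2}}\to\infty$; this shows $\tfrac{z-1}{2-z-w}\in H^2(\mathbb{T}^2)$ and hence $z-1\in L^2(\mu)$. What the paper's route buys is exact constants and the slightly stronger $H^2$ membership of the analytic quotient (which fits the Smirnov-class arguments used later, e.g.\ in Proposition \ref{qrelations}); what your route buys is robustness and directness: it works immediately on the boundary measure (no passage from radial means to boundary integrability is needed for the divergence statement), uses no special closed-form factorization, and generalizes to other polynomials with an isolated, nondegenerate torus zero of this type. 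The only points requiring care — the honest two-sided constants in the local comparison and the bookkeeping of the rotated region and Jacobian — are exactly the ones you flag, and they check out: the rotated image of the $(\theta,\phi)$ square contains $\{|u|,|v|\le\delta\}$ (for the lower bound) and is contained in $\{|u|,|v|\le 2\delta\}$ (for the upper bound), and $|\theta|^2,|\phi|^2\lesssim u^2+v^2$ gives the needed domination of $|z-1|^2$ and $|w-1|^2$.
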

\begin{proof}
  It is easiest to compute the radial integral means:
\begin{equation} \label{integrals}
\frac{1}{(2\pi i)^2} \int_{r\mathbb{T}^2} \frac{|z-1|^2}{|2- z- w|^2}
  \frac{dw}{w}\frac{dz}{z} = \frac{1}{(2\pi)^2} \int_{0}^{2\pi} \int_{0}^{2\pi}
  \frac{|r e^{i\theta} -1|^2}{|2- re^{i\theta} - re^{i\phi}|^2} \,
  d\phi d\theta
\end{equation}
where $r\mathbb{T}^2 = (r\mathbb{T})\times(r\mathbb{T})$.  
Recall
\[
\int_{0}^{2\pi} \frac{1}{|1-Z e^{i\phi}|^2} \frac{d\phi}{2\pi} =
\frac{1}{1-|Z|^2} 
\]
for any $Z \in \mathbb{D}$.
So, the inner integral of \eqref{integrals} equals
\[
\begin{aligned}
  \frac{|re^{i\theta}-1|^2}{2\pi |2-re^{i\theta}|^2} &\int_{0}^{2\pi}
  \frac{1}{|1- \frac{r}{2-re^{i\theta}} e^{i\phi}|^2} d\phi =
  \frac{|re^{i\theta}-1|^2}{|2-r
    e^{i\theta}|^2} \frac{1}{1 - \frac{r^2}{|2-re^{i\theta}|^2}} \\
  & = \frac{|re^{i\theta}-1|^2}{|2-re^{i\theta}|^2 - r^2}\\
  &= \frac{|re^{i\theta}-1|^2}{|(\sqrt{1+r}+\sqrt{1-r}) -
    (\sqrt{1+r}-\sqrt{1-r}) e^{i\theta}|^2}
\end{aligned}
\]
where this last expression comes from a Fej\'{e}r-Riesz type of
factorization of the denominator (but can also just be verified
directly). Our integral reduces to
\[
\begin{aligned}
& \frac{1}{2\pi} \int_{0}^{2\pi}
    \frac{|1-re^{i\theta}|^2}{|(\sqrt{1+r}+\sqrt{1-r}) -
    (\sqrt{1+r}-\sqrt{1-r}) e^{i\theta}|^2} d\theta\\ & = \frac{1+r^2
    - 2r(\frac{\sqrt{1+r} - \sqrt{1-r}}{\sqrt{1+r} +
    \sqrt{1-r}})}{4\sqrt{1-r^2}} \\
& = \frac{2-\sqrt{1-r^2}}{4} \to \frac{1}{2} \qquad \text{ as } r
\nearrow 1.
\end{aligned}
\]
Hence, $\frac{z-1}{2-z-w} \in H^2(\mathbb{T}^2)$ and therefore $z-1
\in L^2(\mu)$.  
Similarly, $w-1 \in L^2(\mu)$.

To prove $1 \notin L^2(\mu)$ we can use some of the above computations to
prove
\[
\frac{1}{(2\pi i)^2} \int_{r\mathbb{T}^2} \frac{1}{|2-z-w|^2} \gkmeas
= \frac{1}{4\sqrt{1-r^2}} \to \infty \text{ as } r\nearrow 1.
\]
\end{proof}

If we set $n=1$ and $m=1$, then
\[
\begin{aligned}
\gkboxsm_\mu &= \{0\} \\
\gkboxr_\mu &= (w-1)\mathbb{C}\\
\gkboxu_\mu &= (z-1)\mathbb{C}\\
\gkbox_\mu &= \text{span}\{z-1, w-1, z+w-2zw\}
\end{aligned}
\]

 We now return to the general situation.
\EOEx
\end{example}

The inner product on $L^2(\mu)$ will be denoted by
\begin{equation} \label{innerprod}
\ip{f}{g}_\mu = \int_{\mathbb{T}^2} f\bar{g} d\mu.
\end{equation}

We shall make use of the machinery of reproducing kernel Hilbert
spaces. 

\begin{notation} \label{kernelnotation} 
Given a finite dimensional subspace $V \subset L^2(\mu)\cap
  \mathbb{C}[z,w]$, we shall use $KV$ to denote the reproducing kernel
  of $V$.  Namely, for each $(Z,W) \in \mathbb{C}^2$, $KV_{(Z,W)}$ is
  the unique element of $V$ satisfying
\[
f(Z,W) = \ip{f}{KV_{(Z,W)}}_\mu
\]
for all $f \in V$ and we define $KV:\mathbb{C}^2 \times \mathbb{C}^2
\to \mathbb{C}$ by
\[
KV((z,w), (Z,W)) := KV_{(Z,W)} (z,w).
\]
\end{notation} 

It is not hard to show $KV$ is conjugate symmetric:
\[
KV((z,w),(Z,W)) = \overline{KV((Z,W), (z,w))},
\]
and if $\{e_1,\dots, e_N\}$ is an orthonormal basis of $V$, then
\[
KV((z,w),(Z,W)) = \sum_{j=1}^{N} e_j(z,w) \overline{e_j(Z,W)}.
\]

We use the following notations for shifts and certain
orthogonal complements using the inner product on $L^2(\mu)$.
\begin{notation} \label{complementnotation}
\[
\begin{aligned}
  w \gkboxsm_\mu &:= \{w p: p \in \gkboxsm_\mu\} \\
  z \gkboxsm_\mu &:= \{z p : p \in \gkboxsm_\mu\} \\
  \gkboxrperpdn_\mu &:= \gkboxr_\mu \ominus \gkboxsm_\mu \\
  \gkboxrperpup_\mu &:= \gkboxr_\mu \ominus (w\gkboxsm_\mu) \\
  \gkboxuperplt_\mu &:= \gkboxu_\mu \ominus \gkboxsm_\mu\\
  \gkboxuperprt_\mu &:= \gkboxu_\mu \ominus (z\gkboxsm_\mu) \\
  \gkboxurperpdn_\mu &:= \gkboxur_\mu \ominus \gkboxu_\mu \\
  \gkboxllperpup_\mu &:= \gkboxll_\mu \ominus (w\gkboxu_\mu) \\ 
  \gkboxllperp_\mu &:= \gkbox_\mu \ominus \gkboxll_\mu \\
  \gkboxurperp_\mu &:= \gkbox_\mu \ominus \gkboxur_\mu 
\end{aligned}
\]
\end{notation}

These last two subspaces are especially important.  They are either
one dimensional or trivial and will provide the connection between
reproducing kernels and polynomials with no zeros on the bidisk.

Frequent use will be made of the following notion of polynomial
``reflection.''

\begin{definition} \label{def:reflection} 
If $p \in \mathbb{C}[z,w]$ is a polynomial of degree at most $(j,k)$
  we define the \emph{reflection} (at the $(j,k)$ degree) to be
\[
\refl{p}(z,w) := z^j w^k \overline{p(1/\bar{z}, 1/\bar{w})}.
\]
\end{definition}

We conclude this section with a lemma about the presence of zeros on
the ``undistinguished'' portion of the boundary of $\mathbb{D}^2$,
namely $(\mathbb{D}\times \mathbb{T})\cup(\mathbb{T} \times
\mathbb{D})$.

\begin{lemma} \label{zeroslemma} Suppose $q \in \mathbb{C}[z,w]$ has
  no zeros on $\mathbb{D}^2$.  If $q(z_0,w_0)=0$ for some $(z_0,w_0)
  \in \mathbb{T}\times \mathbb{D}$, then $q(z_0, w) = 0$ for all $w
  \in \mathbb{C}$; i.e. $(z-z_0)$ divides $q$.  In particular, there
  can only be finitely many $z_0 \in \mathbb{T}$ such that $q(z_0,
  \cdot)$ has a zero in $\mathbb{D}$.
\end{lemma}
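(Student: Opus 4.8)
The claim is that if $q$ has no zeros on $\mathbb{D}^2$ and $q(z_0,w_0)=0$ with $z_0\in\mathbb{T}$ and $w_0\in\mathbb{D}$, then $q(z_0,\cdot)\equiv 0$. The natural approach is to reduce to a one-variable statement about the polynomial $w\mapsto q(z_0,w)$ and argue by a perturbation/continuity argument: a zero of this polynomial at an interior point $w_0$ cannot be ``pushed'' into $\mathbb{D}^2$ unless it was forced to be there by $(z-z_0)\mid q$. First I would set $r(w) := q(z_0,w)$; if $r\not\equiv 0$, I want to derive a contradiction with the hypothesis $q|_{\mathbb{D}^2}\ne 0$.

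The cleanest route is an application of Hurwitz's theorem (continuity of zeros). Fix $w_0\in\mathbb{D}$ with $r(w_0)=0$. Since $r\not\equiv 0$, the zero $w_0$ is isolated, so I can choose a small circle $|w-w_0|=\varepsilon$ contained in $\mathbb{D}$ on which $r$ does not vanish, and let $\delta = \min_{|w-w_0|=\varepsilon}|q(z_0,w)| > 0$. By (uniform) continuity of $q$ on a compact neighborhood, there is $\rho>0$ so that for all $z$ with $|z-z_0|<\rho$ we have $|q(z,w)-q(z_0,w)|<\delta$ on the circle $|w-w_0|=\varepsilon$. By Rouch\'e's theorem, for each such $z$ the polynomial $w\mapsto q(z,w)$ has the same number of zeros (counted with multiplicity) inside $|w-w_0|<\varepsilon$ as $r$ does — in particular at least one. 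Now I pick $z\in\mathbb{D}$ with $|z-z_0|<\rho$ (possible since $z_0\in\mathbb{T}=\partial\mathbb{D}$, so such $z$ exist arbitrarily close to $z_0$); for that $z$ there is $w$ with $|w-w_0|<\varepsilon$, hence $w\in\mathbb{D}$, and $q(z,w)=0$. This contradicts $q$ having no zeros on $\mathbb{D}^2$. Hence $r\equiv 0$, i.e. $q(z_0,w)=0$ for all $w\in\mathbb{C}$, which says exactly that $(z-z_0)$ divides $q$ in $\mathbb{C}[z,w]$ (divide $q$ by $z-z_0$ with remainder in $\mathbb{C}[z,w]$; the remainder is a polynomial in $w$ alone vanishing identically, hence zero).

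The potential subtlety — and the only place one must be slightly careful — is the degenerate case where $r := q(z_0,\cdot)$ could be a \emph{nonzero constant} or have no zeros in $\mathbb{D}$ at all; but that case is vacuous here because we have \emph{assumed} $r(w_0)=0$ for some $w_0\in\mathbb{D}$, so $r$ is not a nonzero constant, and the Rouch\'e argument applies with a genuine interior zero to track. One also needs $q$ itself to be not identically zero and to have positive degree in $w$ near $z_0$, but again $r(w_0)=0$ with $r\not\equiv 0$ forces $\deg_w r\ge 1$, so this is automatic. The final sentence of the lemma is then immediate: if infinitely many $z_0\in\mathbb{T}$ had $q(z_0,\cdot)$ vanishing somewhere in $\mathbb{D}$, then by the above $(z-z_0)\mid q$ for infinitely many distinct $z_0$, which is impossible for a nonzero polynomial of finite degree in $z$. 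I expect essentially no obstacle here; the main thing to get right is the quantitative Rouch\'e estimate and the observation that $z_0\in\mathbb{T}$ guarantees points of $\mathbb{D}$ arbitrarily close to $z_0$.
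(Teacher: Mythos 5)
Your argument is correct, but it proceeds by a genuinely different route than the paper. You fix the interior zero $w_0$ of $r(w)=q(z_0,w)$, assume $r\not\equiv 0$, and use a Rouch\'e (continuity-of-zeros) estimate on a small circle $|w-w_0|=\varepsilon\subset\mathbb{D}$ to show that perturbing $z_0\in\mathbb{T}$ slightly into $\mathbb{D}$ still produces a $w$-zero inside that circle, giving a zero of $q$ in $\mathbb{D}^2$ and a contradiction; the divisibility and finiteness statements then follow exactly as you say (with $q\not\equiv 0$ guaranteed by the no-zero hypothesis). The paper instead reduces to $q$ irreducible and applies the Weierstrass preparation theorem in the $z$-variable near $(z_0,w_0)$, writing $q=u\cdot(z^k+a_1(w)z^{k-1}+\cdots+a_k(w))$ on a small bidisk $D_1\times D_2$ with $D_2\subset\mathbb{D}$; since the local $z$-roots $z_1(w),\dots,z_k(w)$ must satisfy $|z_j(w)|\ge 1$ (else $q$ would vanish in $\mathbb{D}^2$), the product $a_k(w)$ has modulus $\ge 1$ with equality at $w_0$, so the maximum principle forces $a_k$ to be a unimodular constant and hence the roots to be constantly equal to $z_0$, i.e. $(z-z_0)\mid q$. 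Both are ``zeros move continuously'' arguments, but yours is shorter and needs only one-variable Rouch\'e plus uniform continuity, while the paper's version exhibits the local branch structure of $Z_q$ near the boundary point and pins the branches at $z_0$ via a rigidity (minimum-modulus) argument; for the lemma as stated your more elementary approach fully suffices.
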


\begin{proof} There is no harm in assuming $q$ is irreducible. Suppose
  $q(z_0,w)$ is not identically zero as a function of $w$.  Then, we
  can apply the Weierstrass preparation theorem to $q$ and write
\[
q(z,w) = u(z,w)(z^k + a_1(w) z^{k-1} + \cdots + a_k(w))
\]
on some bidisk $D_1\times D_2$ containing $(z_0,w_0)$ where $u$ is
holomorphic and nonvanishing on $D_1\times D_2$ and each $a_j$ is
holomorphic on $D_2$.  We also assume $D_2 \subset \mathbb{D}$.
Furthermore, for $w \in D_2\setminus\{w_0\}$, each $a_j(w)$ is a
symmetric function of the $k$ (necessarily) distinct roots (by
irreducibility) $z_1(w), z_2,(w), \dots, z_k(w) \in D_1$ of $q(\cdot,
w)$ for $w \in D_2\setminus \{w_0\}$. Note $a_k(w) = (-1)^kz_1(w)
\cdots z_k(w)$ for $w\ne w_0$ and $a_k(w_0) = (-z_0)^k$.  Since $q$
has no zeros in $\mathbb{D}^2$, $|z_j(w)| \geq 1$ for all $j$ and $w
\in D_2$, and hence $|a_k(w)|\geq 1$ for all $w \in D_2$. Since
$|a_k(w_0)|=1$ the maximum principle implies $a_k$ is a unimodular
constant, which in turn implies the roots $z_1(w), \dots, z_k(w)$ are
all unimodular valued.  This can only be the case if they are constant
and equal to $z_0$; i.e. $q(z,w)$ can be divided by $z-z_0$.
\end{proof}

\section{General properties of orthogonal polynomials on
  $\mathbb{T}^2$} \label{genprop}

This section is about orthogonal polynomials on $\mathbb{T}^2$ with
respect to a (not necessarily finite) positive Borel measure on
$\mathbb{T}^2$. We use reproducing kernels to study entire subspaces
of polynomials all at once, so the ``orthogonal polynomials'' are in
some sense disguised.  The following theorem expresses certain
rearrangements of the subspaces described in the previous section
using reproducing kernels.

\begin{theorem} \label{rearrangethm} Let $\mu$ be a positive Borel
  measure on $\mathbb{T}^2$ for which $\mathbb{C}[z,w] \cap L^2(\mu)
  \ne \varnothing$ and for which $\gkboxllperp_\mu$ is one
  dimensional.  Let
\[
\epsilon := (K \gkboxurperpdn_\mu - K \gkboxrperpdn_\mu) -
(K\gkboxllperpup_\mu - K \gkboxlperpup_\mu ).
\]

If $q$ is any unit norm polynomial in
  $\gkboxllperp_\mu$, then writing $q\bar{q} = q(z,w)
  \overline{q(Z,W)}$ and omitting the expressions ``$((z,w),(Z,W))$'' 
\[
\begin{aligned}
  q\bar{q} &- \refl{q} \overline{\refl{q}} \\
  =& (1-z \bar{Z})(1-w \bar{W}) K\gkboxsm_\mu  \\
  & + (1-z \bar{Z})K\gkboxrperpdn_\mu + (1-w\bar{W}) K
  \gkboxuperplt_\mu +\epsilon \\
  =& (1-z \bar{Z}) K\gkboxrperpdn_\mu + (1-w \bar{W})
  K\gkboxuperprt_\mu + \epsilon\\
=& (1-z \bar{Z}) K\gkboxrperpup_\mu + (1-w \bar{W})
  K\gkboxuperplt_\mu + \epsilon\\
%
\end{aligned}
\]

\end{theorem}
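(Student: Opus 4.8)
The plan is to build the identity up from reproducing kernel manipulations on nested subspaces, using the fact that a reproducing kernel of an orthogonal direct sum is the sum of the reproducing kernels, and that multiplication by $z$ or $w$ is an isometry on $L^2(\mu)$ (since $|z|=|w|=1$ on $\mathbb{T}^2$). First I would fix a unit-norm $q \in \gkboxllperp_\mu$, so that $q\bar q = K\gkboxllperp_\mu$ (a one-dimensional kernel), and dually $\refl q\,\overline{\refl q} = K\gkboxurperp_\mu$ after checking that $\refl q$ is the corresponding unit vector in $\gkboxurperp_\mu$ (this uses that reflection at degree $(n,m)$ is, on the torus, a conjugate-linear isometry swapping the roles of the "$(0,0)$-free" and "$(n,m)$-free" subspaces — concretely $\ip{\refl p}{\refl r}_\mu = \overline{\ip{p}{r}_\mu}$ on $\mathbb{T}^2$, and it carries $\gkbox_\mu \ominus \gkboxll_\mu$ onto $\gkbox_\mu \ominus \gkboxur_\mu$). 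Hence the left side $q\bar q - \refl q\,\overline{\refl q}$ equals $K\gkboxllperp_\mu - K\gkboxurperp_\mu = K\gkbox_\mu - K\gkboxll_\mu - (K\gkbox_\mu - K\gkboxur_\mu) = K\gkboxur_\mu - K\gkboxll_\mu$.

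Next I would compute $K\gkboxur_\mu - K\gkboxll_\mu$ by decomposing both $\gkboxur_\mu$ and $\gkboxll_\mu$ along two different "towers" of orthogonal complements. On one hand, $\gkboxur_\mu = \gkboxu_\mu \oplus \gkboxurperpdn_\mu$ and $\gkboxu_\mu = \gkboxsm_\mu \oplus \gkboxuperplt_\mu$ (or $= z\gkboxsm_\mu \oplus \gkboxuperprt_\mu$ — this is where the three different right-hand expressions come from: each line is a different valid splitting of $\gkboxu_\mu$ and correspondingly of $\gkboxr_\mu$). On the other hand, $\gkboxll_\mu = w\gkboxu_\mu \oplus \gkboxllperpup_\mu$ — one needs to verify $w\gkboxu_\mu \subseteq \gkboxll_\mu$ (clear: multiplying a polynomial of degree $\le(n,m-1)$ by $w$ kills the constant term and keeps degree $\le(n,m)$) and that the quotient is exactly $\gkboxllperpup_\mu$. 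Using the isometry $p \mapsto wp$, we get $K(w\gkboxu_\mu)((z,w),(Z,W)) = w\bar W\, K\gkboxu_\mu$, and similarly $K(w\gkboxsm_\mu) = w\bar W\,K\gkboxsm_\mu$, $K(z\gkboxsm_\mu)=z\bar Z\,K\gkboxsm_\mu$. Assembling: $K\gkboxur_\mu = K\gkboxsm_\mu + K\gkboxuperplt_\mu + K\gkboxurperpdn_\mu$ and $K\gkboxll_\mu = w\bar W(K\gkboxsm_\mu + K\gkboxuperplt_\mu) + K\gkboxllperpup_\mu$ (after also splitting $\gkboxu_\mu$ inside $w\gkboxu_\mu$). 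Subtracting and grouping the $K\gkboxsm_\mu$ and $K\gkboxuperplt_\mu$ terms gives $(1-w\bar W)(K\gkboxsm_\mu + K\gkboxuperplt_\mu) + K\gkboxurperpdn_\mu - K\gkboxllperpup_\mu$. To reach the stated first line I would then further split $K\gkboxrperpdn_\mu$-type terms: note $\gkboxr_\mu = \gkboxsm_\mu \oplus \gkboxrperpdn_\mu$, so $K\gkboxsm_\mu$ alone is not yet enough; I expand $K\gkboxuperplt_\mu$ via the analogous $z$-shifted decomposition to introduce the $(1-z\bar Z)K\gkboxrperpdn_\mu$ piece, and collect everything left over into the error term $\epsilon = (K\gkboxurperpdn_\mu - K\gkboxrperpdn_\mu) - (K\gkboxllperpup_\mu - K\gkboxlperpup_\mu)$. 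The three displayed right-hand sides are then seen to be equal by the pigeonhole identity $K\gkboxrperpdn_\mu + (1-w\bar W)\,(\text{stuff}) = K\gkboxrperpup_\mu + (\text{other stuff})$, i.e. all three are just regroupings of the same decomposition of $\gkbox_\mu$ obtained by peeling off the $w$-low / $w$-high and $z$-low / $z$-high strips in different orders.

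The main obstacle will be bookkeeping: keeping straight the many nested subspaces in Notation \ref{complementnotation} and confirming that each claimed orthogonal decomposition (e.g. $\gkboxu_\mu = \gkboxsm_\mu \oplus \gkboxuperplt_\mu$ as an \emph{internal orthogonal} direct sum inside $L^2(\mu)$, and likewise $\gkboxur_\mu = \gkboxu_\mu \oplus \gkboxurperpdn_\mu$, $\gkboxll_\mu = w\gkboxu_\mu \oplus \gkboxllperpup_\mu$) is valid — in particular that the larger space really does contain the shifted smaller one and that the definitions of $\ominus$ line up so that kernels add. A secondary subtlety is the behavior of reflection: I must check that $\refl q$ has unit norm in $L^2(\mu)$ and that it spans $\gkboxurperp_\mu$, which requires knowing $\gkboxurperp_\mu$ is one-dimensional — this should follow from the hypothesis that $\gkboxllperp_\mu$ is one-dimensional together with the reflection isometry argument. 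Once these structural facts are in hand, the identity is a finite sequence of substitutions of $z\bar Z$ and $w\bar W$ for the shift-by-$z$ and shift-by-$w$ isometries, with no analysis beyond the reproducing-kernel formalism.
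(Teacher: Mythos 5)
Your proposal is correct and follows essentially the same route as the paper, which omits the proof precisely because it is this routine reproducing-kernel bookkeeping (reflection and multiplication by $z$ or $w$ are isometries of $L^2(\mu)$, kernels add over orthogonal decompositions, and shifting a subspace multiplies its kernel by $z\bar Z$ or $w\bar W$), identical to the probability-measure case in Knese's earlier paper. The only detail left implicit in your ``collect the leftovers into $\epsilon$'' step is the identification $K\gkboxlperpup_\mu = z\bar Z\, K\gkboxrperpup_\mu$ (the $z$-shift isometry once more), after which the stated error term and the mutual equality of the three displayed right-hand sides come out exactly as you describe.
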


The proof of this theorem is identical to the proof of Theorem 4.5 in
Knese \cite{gK08a}, which is for probability measures, so we omit
it. All that is needed for the proof to work is the fact that
reflection and multiplication by a coordinate function are both
isometric operations in $L^2(\mu)$ and that these operations behave
nicely with respect to reproducing kernels (i.e. reflecting a subspace
reflects the reproducing kernels, the reproducing kernel of an
orthogonal direct sum of two subspaces is the sum of the reproducing
kernels of the two subspaces, and multiplying a subspace by $z$
multiplies the reproducing kernel by $z\bar{Z}$).

The above formula may appear complicated but when
\[
\gkboxurperpdn_\mu = \gkboxrperpdn_\mu
\]
we have
\[
\gkboxllperpup_\mu = \gkboxlperpup_\mu
\]
by reflecting these subspaces and this implies that the $\epsilon$
above disappears.  Several nice things occur because of this. We
devote Section \ref{sec:ao} to studying what happens when
$\gkboxurperpdn_\mu = \gkboxrperpdn_\mu$, culminating in the fact in
Section \ref{sec:BernSzeg} that on $\gkbox_\mu$, $\mu$ behaves like a
\emph{Bernstein-Szeg\H{o} measure}:
\[
\frac{1}{|q(z,w)|^2} d\sigma(z,w).
\]
For the moment, we study properties that hold in general. Recall
$\mathcal{I}_\mu = \mathbb{C}[z,w] \cap L^2(\mu)$.

\begin{definition}
  We say an element $p$ of $\mathbb{C}[z,w]$ is a \emph{divisor of the
    ideal} $\mathcal{I}_\mu$ if whenever $p q \in \mathcal{I}_\mu$,
  then $q \in \mathcal{I}_\mu$.
\end{definition}

Polynomials with no zeros on $\mathbb{T}^2$ are always divisors of
$\mathcal{I}_\mu$.  The following proposition presents some
restrictions on the factors of certain subspaces of polynomials
defined by $\mu$.

\begin{prop}\ 

\begin{enumerate}
\item   
\begin{enumerate}
\item If $p$ is a nonzero element of $\gkboxuperprt_\mu$ or
  $\gkboxperprt_\mu$, then $p$ is not divisible by a polynomial of the
  form $L(z,w) = z - z_0$ for $z_0 \in \mathbb{D}$. 
\item  If $p$ is a
  nonzero element of $\gkboxuperplt_\mu$ or $\gkboxperplt_\mu$ then
  $p$ is not divisible by any $L(z,w) = z-z_0$ when $z_0 \in
  \mathbb{C}\setminus \overline{\mathbb{D}}$. 
 \item In addition, if $z_0
  \in \mathbb{T}$, and $L(z,w) = z-z_0$ happens to be a divisor in
  $\mathcal{I}_\mu$, then nonzero elements of $\gkboxuperprt_\mu,
  \gkboxuperplt_\mu, \gkboxperprt_\mu, \gkboxperplt_\mu$ cannot have
  $L$ as a factor.
\end{enumerate}

\item 
\begin{enumerate}
\item If $p$ is a nonzero element of $\gkboxrperpup_\mu$ or
$\gkboxperpup_\mu$, then $p$ cannot have a factor of the form $J(z,w) =
w-w_0$ when $w_0 \in \mathbb{D}$.  

\item If $p$ is a nonzero element
of $\gkboxrperpdn_\mu$ or $\gkboxperpdn_\mu$, then $p$ cannot have a
factor of the form $J(z,w) = w - w_0$ when $w_0 \in \mathbb{C}
\setminus \overline{\mathbb{D}}$. 

\item In addition, if $w_0 \in \mathbb{T}$, and $J(z,w) = w-w_0$ happens to
be a divisor in $\mathcal{I}_\mu$, then nonzero elements of
$\gkboxrperpup_\mu, \gkboxrperpdn_\mu, \gkboxperpup_\mu,
\gkboxperpdn_\mu$ cannot have $J$ as a factor.
\end{enumerate}
\end{enumerate}
\end{prop}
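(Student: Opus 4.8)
The plan is to treat all the statements uniformly: each says that a polynomial $L$ (or $J$) of the form $z-z_0$ (resp. $w-w_0$) cannot divide a nonzero element of a certain orthogonal-complement subspace, and the proof in every case is the same short argument, namely that if $p = L\cdot g$ lay in the complement, then $g$ would itself be a competing element of the \emph{subspace} that $p$'s complement is taken against, contradicting orthogonality (and using that $L$ is a divisor of $\mathcal{I}_\mu$ to guarantee $g \in L^2(\mu)$ when $z_0 \in \mathbb{T}$, or automatically when $|z_0|\ne 1$ since then $1/L$ is bounded on $\mathbb{T}^2$). Concretely, take item (1)(a): suppose $0 \ne p \in \gkboxuperprt_\mu = \gkboxu_\mu \ominus z\gkboxsm_\mu$ and $p = (z-z_0)g$ with $z_0 \in \mathbb{D}$. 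Since $|z-z_0|$ is bounded above and below on $\mathbb{T}^2$, $g \in L^2(\mu)$, and $\deg g \le (n-1,m-1)$ so $g \in \gkboxsm_\mu$, whence $zg \in z\gkboxsm_\mu$. I would then show $p$ is \emph{not} orthogonal to $z\gkboxsm_\mu$ by computing $\ip{p}{zg}_\mu$: writing $p = (z-z_0)g$, we get $\ip{p}{zg}_\mu = \ip{zg}{zg}_\mu - z_0\ip{g}{zg}_\mu = \|g\|_\mu^2 - z_0 \ip{g}{zg}_\mu$. Multiplication by $z$ is isometric on $L^2(\mathbb{T}^2)$, hence $|\ip{g}{zg}_\mu| \le \|g\|_\mu \|zg\|_\mu = \|g\|_\mu^2$ by Cauchy–Schwarz, and since $|z_0| < 1$ we get $\mathrm{Re}\,\ip{p}{zg}_\mu \ge \|g\|_\mu^2 - |z_0|\,\|g\|_\mu^2 > 0$ (note $g \ne 0$ because $p \ne 0$). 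This contradicts $p \perp z\gkboxsm_\mu$.

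For item (1)(b), the complement is against $\gkboxsm_\mu$ itself rather than $z\gkboxsm_\mu$, so I would instead factor out from the top degree: if $z_0 \notin \overline{\mathbb{D}}$, write $L = z-z_0 = -z_0(1 - z/z_0)$ and note that $p = (z-z_0)g$ with $\deg p \le (n-1,m-1)$ would force... no — here $p$ has degree $\le(n-1,m-1)$ already (if $p \in \gkboxuperplt_\mu \subset \gkboxu_\mu$, wait $\gkboxu_\mu$ has degree $\le(n,m-1)$). Let me restate: for $p \in \gkboxuperplt_\mu = \gkboxu_\mu \ominus \gkboxsm_\mu$ with $\deg p \le (n,m-1)$, if $p = (z-z_0)g$ then $\deg g \le (n-1,m-1)$, so $g \in \gkboxsm_\mu$, and I compute $\ip{p}{g}_\mu = \ip{zg}{g}_\mu - z_0\|g\|_\mu^2$; since $|\ip{zg}{g}_\mu| \le \|g\|_\mu^2$ and $|z_0| > 1$, we get $\mathrm{Re}(-z_0\ip{p}{g}_\mu)$... rather, $|\ip{p}{g}_\mu| \ge |z_0|\|g\|_\mu^2 - |\ip{zg}{g}_\mu| \ge (|z_0|-1)\|g\|_\mu^2 > 0$, again contradicting $p \perp g$. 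Item (1)(c) with $z_0 \in \mathbb{T}$: here $1/L$ need not be bounded, but $L$ is assumed a divisor of $\mathcal{I}_\mu$, so $p = Lg \in \mathcal{I}_\mu$ forces $g \in \mathcal{I}_\mu$, hence $g \in \gkboxsm_\mu$ (degree bound) and also $zg \in z\gkboxsm_\mu$; then the same Cauchy–Schwarz estimate gives $|\ip{p}{g}_\mu| = |\ip{zg}{g}_\mu - z_0\|g\|_\mu^2|$ and $|\ip{p}{zg}_\mu| = |\|g\|_\mu^2 - z_0\ip{g}{zg}_\mu|$; since equality in Cauchy–Schwarz would require $zg = \lambda g$ pointwise $\mu$-a.e. with $|\lambda|=1$, which is impossible for a nonzero polynomial $g$ (the support of $\mu$ is not contained in a proper algebraic subset of $\mathbb{T}^2$, so $zg=\lambda g$ a.e. would force $g\equiv 0$), at least one of these two inner products is nonzero, contradicting orthogonality to both $\gkboxsm_\mu$ and $z\gkboxsm_\mu$ (one of which is the relevant complement). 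Part (2) is identical with the roles of $z,w$ and the corresponding subspaces swapped.

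The only genuinely delicate point — and the one I'd flag as the main obstacle — is the $z_0 \in \mathbb{T}$ case (items (1)(c), (2)(c)): one must be careful that $\ip{zg}{g}_\mu$ is \emph{strictly} less than $\|g\|_\mu^2$ in modulus, i.e. that Cauchy–Schwarz is strict. This uses the third standing assumption on $\mu$ (support not contained in the zero set of a nonzero polynomial on $\mathbb{T}^2$), which guarantees $zg$ and $g$ are not $\mu$-a.e. proportional unless $g = 0$; I would spell this out as a small lemma or inline remark. Everything else is the routine one-line Cauchy–Schwarz-plus-triangle-inequality computation applied to each of the (many) listed subspaces, and the cases for $\gkboxperprt_\mu, \gkboxperplt_\mu, \gkboxperpup_\mu, \gkboxperpdn_\mu$ (defined analogously as complements inside $\gkbox_\mu$) go through verbatim, differing only in the ambient degree bounds.
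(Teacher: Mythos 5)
Your proof is correct and is essentially the paper's argument: factor $p = Lg$, note $g\in L^2(\mu)$ (automatically when $|z_0|\ne 1$, via the divisor hypothesis when $z_0\in\mathbb{T}$) so that $g \in \gkboxsm_\mu$ and $zg$ (or $g$) lies in the subspace against which $p$ is orthocomplemented, and use that multiplication by a coordinate is isometric in $L^2(\mu)$ to contradict the orthogonality. The only cosmetic difference is that the paper runs a Pythagorean identity on $z_0 g = zg - p$ (which makes the unimodular case immediate, yielding $\|p\|_{L^2(\mu)}=0$ and hence $p=0$ by the support assumption), whereas you bound $\ip{p}{zg}_\mu$ directly and must supply strict Cauchy--Schwarz for $z_0\in\mathbb{T}$ --- which you do correctly from the same support assumption, and your estimate in fact shows \emph{both} $\ip{p}{g}_\mu$ and $\ip{p}{zg}_\mu$ are nonzero, so the ``at least one'' phrasing is harmlessly weaker than what you actually prove.
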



\begin{proof} We prove item (1a).  Let $p \in \gkboxuperprt_\mu$ and
  suppose $p = gL$ for some $g \in \gkboxsm$ where $L(z,w) = z-z_0$
  with $|z_0|< 1$.  Since $L$ has no zeros on $\mathbb{T}^2$, $g = p/L
  \in L^2(\mu)$.  Then, $z_0 g(z,w) = z g(z,w)-p(z,w)$ and
\[
|z_0|^2 ||g||^2_{L^2(\mu)} = ||-p+z g||^2_{L^2(\mu)} =
||p||_{L^2(\mu)}^2 + ||z g||_{L^2(\mu)}^2 = ||p||_{L^2(\mu)}^2 +
||g||_{L^2(\mu)}^2.
\]
since $p \perp_\mu z g$.  Rearranging we arrive at
\[
||p||_{L^2(\mu)}^2 = (|z_0|^2-1) ||g||_{L^2(\mu)}^2 < 0,
\] 
a contradiction.
The proofs of the other statements are variations on the above idea.
\end{proof}

Curiously, slightly more complicated factors can be ruled out by a
similar argument.  For instance, if $|a| < 1$, then $P(z,w) = z^2 - a
w^3$ cannot be a factor of any polynomial in $\gkboxperprt_\mu$.  If
$|a|=1$ and $P$ is a divisor of $\mathcal{I}_\mu$ then the same
conclusion holds.

\begin{prop} \label{fullrankprop} Let $\{e_1, \dots, e_N \} \subset
  \mathbb{C}[z,w]$ be an orthonormal basis for $\gkboxrperpup_\mu$
  which we write vectorially as $\vec{E}(z,w) = (e_1(z,w), \dots,
  e_N(z,w))^t$ which we in turn write as
\[
\vec{E}(z,w) = E(w) \mathbf{\Lambda}_n(z)
\]
where $E(w)$ is an $(N\times n)$-matrix valued polynomial in $w$ of
degree at most $m$.  Then, $E(w_0)$ has rank $N$ for all $w_0 \in
\mathbb{D}$ and for all $w_0 \in \mathbb{T}$ with the property that
$L(z,w) = w - w_0$ is a divisor of $\mathcal{I}_\mu$.  The same
results hold for $\gkboxuperprt_\mu$ with the roles of $z$ and $w$
switched.
\end{prop}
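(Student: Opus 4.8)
The plan is to argue by contradiction: suppose $E(w_0)$ has rank $< N$ for some admissible $w_0$ (i.e. $w_0 \in \mathbb{D}$, or $w_0 \in \mathbb{T}$ with $w - w_0$ a divisor of $\mathcal{I}_\mu$). Then there is a nonzero vector $c \in \mathbb{C}^N$ with $c^* E(w_0) = 0$, which means the polynomial $p(z,w) := c^* \vec{E}(z,w) = \sum_{j=1}^N \bar{c}_j e_j(z,w)$ satisfies $p(z,w_0) = c^* E(w_0) \mathbf{\Lambda}_n(z) = 0$ identically in $z$. Since $p$ is a nonzero element of $\gkboxrperpup_\mu$ (it is a nontrivial linear combination of an orthonormal basis), it has degree at most $(n-1,m)$, and vanishing on $z \times \{w_0\}$ forces $(w - w_0)$ to divide $p$. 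So write $p = (w-w_0) g$ with $\deg g \leq (n-1, m-1)$, i.e. $g \in \gkboxsm$.

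The next step is to show $g \in L^2(\mu)$, so that $g \in \gkboxsm_\mu$. If $w_0 \in \mathbb{D}$ this is immediate since $w - w_0$ has no zeros on $\mathbb{T}^2$, so $g = p/(w-w_0)$ is bounded times $p$ on $\mathbb{T}^2$, hence square integrable. If $w_0 \in \mathbb{T}$, this is exactly where the hypothesis that $w - w_0$ is a divisor of $\mathcal{I}_\mu$ enters: $p = (w-w_0)g \in \mathcal{I}_\mu$ and $(w-w_0)$ a divisor gives $g \in \mathcal{I}_\mu$ by definition. Now invoke item (2a) of the preceding Proposition: a nonzero element of $\gkboxrperpup_\mu$ cannot have a factor $w - w_0$ with $w_0 \in \mathbb{D}$; and in the $w_0 \in \mathbb{T}$ case, item (2c) says a nonzero element of $\gkboxrperpup_\mu$ cannot have $w - w_0$ as a factor when it is a divisor of $\mathcal{I}_\mu$. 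Either way $p$, being a nonzero element of $\gkboxrperpup_\mu$ divisible by $w - w_0$, contradicts that Proposition. Hence $E(w_0)$ has full rank $N$.

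Actually, one can also argue self-containedly without citing item (2) by reproducing the norm computation: from $p = (w-w_0)g$ we get $\|p\|_{L^2(\mu)}^2 = \|wg - w_0 g\|_{L^2(\mu)}^2$, and since $p \perp_\mu (w\gkboxsm_\mu) \ni wg$ (this is the defining property of $\gkboxrperpup_\mu = \gkboxr_\mu \ominus (w\gkboxsm_\mu)$), expanding gives $|w_0|^2 \|g\|^2 = \|p\|^2 + \|wg\|^2 = \|p\|^2 + \|g\|^2$, so $\|p\|^2 = (|w_0|^2 - 1)\|g\|^2 \leq 0$, forcing $p = 0$ when $|w_0| \leq 1$ (and strictly $< 0$, an outright contradiction, when $|w_0| < 1$; when $|w_0| = 1$ it forces $g = 0$ hence $p = 0$). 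The main obstacle — really the only subtle point — is the $w_0 \in \mathbb{T}$ case, where square-integrability of the quotient $g$ is not automatic and genuinely requires the divisor hypothesis; everything else is the short orthogonality computation already used in the proof of the previous Proposition. The statement for $\gkboxuperprt_\mu$ follows by the symmetric argument, interchanging the roles of $z$ and $w$ and using $\gkboxuperprt_\mu = \gkboxu_\mu \ominus (z\gkboxsm_\mu)$ together with item (1a)/(1c).
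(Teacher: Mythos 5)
Your proposal is correct and follows essentially the same route as the paper: both proofs extract, from a rank deficiency of $E(w_0)$, a nonzero element of $\gkboxrperpup_\mu$ divisible by $w-w_0$ and rule it out via items (2a)/(2c) of the preceding Proposition (your inlined orthogonality computation is just that Proposition's proof, with the divisor hypothesis supplying square-integrability of the quotient exactly as you identify). The only cosmetic difference is that the paper first records $\dim \gkboxrperpup_\mu = N \le n$, which your argument does not need in advance and in fact recovers as a by-product of full rank at any $w_0 \in \mathbb{D}$.
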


\begin{proof} First, we claim $\dim \gkboxrperpup_\mu := N \leq n$.
  Given $n+1$ polynomials in $\gkboxrperpup_\mu$, some linear
  combination of them will be a multiple of $w$ (since the degree in
  $z$ is at most $n-1$); such a combination would be orthogonal to
  itself (by definition of $\gkboxrperpup_\mu$) and therefore zero;
  and hence any $n+1$ polynomials in $\gkboxrperpup_\mu$ are
  dependent.  So, $\dim \gkboxrperpup_\mu \leq n$.

  Next, suppose $E(w_0)$ has rank less than $N$ at some point $w_0 \in
  \mathbb{C}$.  Since $E(w_0)$ is $N\times n$ and $N\leq n$ there must
  be a nonzero vector $\mathbf{v} \in \mathbb{C}^N$ such that $\mathbf{v}^t
  E(w_0) = \mathbf{0}^t$; i.e. the following (necessarily nonzero)
  polynomial
\[ 
q(z,w) = \mathbf{v}^t E(w) \mathbf{\Lambda}_n(z) = \mathbf{v}^t \vec{E}(z,w)
\]
is in $\gkboxrperpup_\mu$ and vanishes on the set $\{w=w_0\}$. By the
previous proposition this can only happen if $w_0 \notin \mathbb{D}$
and if it happens that $w_0\in \mathbb{T}$, $w-w_0$ cannot be a
divisor of $\mathcal{I}_\mu$.  So, $E(w_0)$ has full rank $N$ everywhere
in $\mathbb{D}$ and at all points $w_0\in \mathbb{T}$ for which
$w-w_0$ is a divisor of $\mathcal{I}_\mu$.
\end{proof}

Continuing our previous aside, we can also say that $\mathbf{E} \in
\mathbb{C}^{N}[z,w]$ as above when restricted to the variety $\{z^2 -
aw^3=0\}$ (here $|a|<1$) does not sit inside any proper subspace of
$\mathbb{C}^N$.  

\begin{remark} The main ideas of the previous two propositions
  appeared in the appendix of Knese \cite{gK08b} in a less detailed
  form.
\end{remark}

\begin{definition} \label{t2symmetric}
A polynomial $p \in \mathbb{C}[z,w]$ is $\mathbb{T}^2$-symmetric if it
equals a unimodular constant $\mu$ times its reflection:
\[
p(z,w) = \mu \refl{p}(z,w) = \mu z^j w^k \overline{p(1/\bar{z},
  1/\bar{w})};
\]
here $p$ has degree exactly $(j,k)$. 
\end{definition}

\begin{prop} \label{gcdprop} Let $P$ be the greatest common divisor of
  $\gkbox_\mu$.  Then, every factor of $P$ is $\mathbb{T}^2$-symmetric
  and the zero set of every factor of $P$ intersects $\mathbb{T}^2$.
\end{prop}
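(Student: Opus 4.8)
The plan is to exploit the translation-invariance (really, reflection-invariance) of the space $\gkbox_\mu$ under the reflection operation, together with the fact that reflection and multiplication by coordinate functions are isometries on $L^2(\mu)$. First I would observe that $\gkbox_\mu$ is invariant under reflection at the degree $(n,m)$: if $p \in \gkbox_\mu$ has degree at most $(n,m)$, then $\refl{p}(z,w) = z^n w^m \overline{p(1/\bar z, 1/\bar w)}$ also has degree at most $(n,m)$, and $|\refl{p}| = |p|$ pointwise on $\mathbb{T}^2$, so $\refl{p} \in L^2(\mu)$ as well; hence $\refl{p} \in \gkbox_\mu$. Now if $P = \gcd(\gkbox_\mu)$, then $P$ divides every $p \in \gkbox_\mu$, so (writing each such $p = P g_p$) the polynomial $\refl{P}$ — which is a divisor of each $\refl{p}$, being a product of the reflections of the irreducible factors of $P$, up to the subtlety of tracking degrees carefully — should divide every element of $\gkbox_\mu$ as well. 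Thus $\refl{P}$ and $P$ generate the same (principal) divisor, forcing $P = c\,\refl{P}$ for a constant $c$, and comparing one nonzero coefficient shows $|c| = 1$; this handles $\mathbb{T}^2$-symmetry of $P$ itself. To get that \emph{every factor} of $P$ is $\mathbb{T}^2$-symmetric, I would factor $P$ into irreducibles and argue that the reflection operation permutes the irreducible factors of $P$ (matched with multiplicity); an irreducible factor $\pi$ whose reflection $\refl{\pi}$ is not an associate of $\pi$ would have to be paired with $\refl{\pi}$ as another factor of $P$, but then I can produce an element of $\gkbox_\mu$ not divisible by the full $P$ — this is where I expect to need an extra input.

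The extra input I would use is the preceding propositions on factors of the subspaces $\gkboxrperpdn_\mu$, $\gkboxuperplt_\mu$, etc., or more directly the elementary divisor argument in their proofs: a polynomial of the form $z - z_0$ with $|z_0| < 1$, or $w - w_0$ with $|w_0| < 1$, or their reciprocals with $|z_0|>1$, cannot divide \emph{every} element of $\gkbox_\mu$ unless it is balanced out. More concretely, to show the zero set of each factor of $P$ meets $\mathbb{T}^2$: suppose some irreducible factor $\pi$ of $P$ has no zeros on $\mathbb{T}^2$. Then $\pi$ has no zeros on $\mathbb{T}^2$, so it is a divisor of $\mathcal{I}_\mu$ and, crucially, $1/\pi$ is bounded on $\mathbb{T}^2$, so that $\gkbox_\mu / \pi := \{ p/\pi : p \in \gkbox_\mu\}$ still consists of polynomials lying in $L^2(\mu)$. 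But $\pi$ being a factor of the gcd means every $p/P$, hence every $p/\pi$, is a polynomial; combined with $P/\pi$ still being the gcd of this shifted space, we can iterate. The contradiction should come from a dimension/degree count: dividing out by $\pi$ lowers the degree box, but one can exhibit enough independent polynomials in $\gkbox_\mu$ (coming from the reproducing kernel description in Theorem \ref{rearrangethm}, or just from the standing assumption that the support of $\mu$ is not contained in a single algebraic curve) to force $\pi$ to be a unit. The cleanest route is probably: if $\pi \mid P$ and $\pi$ has no zeros on $\mathbb{T}^2$, then $\refl{\pi}$ also has no zeros on $\mathbb{T}^2$ and (by the $\mathbb{T}^2$-symmetry already established for $P$, applied factorwise) $\refl{\pi}$ also divides $P$; but then $\pi \refl{\pi}$ divides every element of $\gkbox_\mu$ while $\pi\refl{\pi}$ is a genuine polynomial of positive degree with no zeros on $\mathbb{T}^2$ — and such a factor can be "removed," reducing to a strictly smaller problem, eventually contradicting the nontriviality of $\gkbox_\mu$ guaranteed by the standing assumptions (the support condition ensures $\gkbox_\mu \ne \{0\}$ for $n,m$ large, and one reduces to that case).

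The main obstacle, I expect, is the bookkeeping around \emph{degrees} when reflecting: the reflection $\refl{p}$ is defined at the degree $(n,m)$ box, not at the actual degree of $p$, so $\refl{(Pg)}$ is not literally $\refl{P}\cdot(\text{something})$ unless one is careful about which degrees the reflections are taken at — the relation is $\refl{(Pg)}(z,w) = z^n w^m \overline{P(1/\bar z,1/\bar w)g(1/\bar z,1/\bar w)}$, and if $\deg P = (a,b)$ and $\deg g \le (n-a, m-b)$ this factors as $(z^a w^b \overline{P(1/\bar z,1/\bar w)})(z^{n-a}w^{m-b}\overline{g(1/\bar z,1/\bar w)}) = \refl{P}(z,w)\,\widetilde{g}(z,w)$ where $\widetilde g$ is the reflection of $g$ at degree $(n-a,m-b)$. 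So the divisibility $\refl{P} \mid \refl{p}$ does go through, but only after pinning down that every $p \in \gkbox_\mu$ with $P \mid p$ has $\deg(p/P) \le (n-a, m-b)$, which is immediate from $\deg p \le (n,m)$. Once this degree accounting is set up, the reflection-invariance argument and the removal-of-$\mathbb{T}^2$-free-factors argument are both fairly short, and the conclusion follows.
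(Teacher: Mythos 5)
The genuine gap is in the first and harder half of the proposition, that \emph{every factor} of $P$ is $\mathbb{T}^2$-symmetric. Your argument that $P$ itself is symmetric (reflection invariance of $\gkbox_\mu$ plus the degree bookkeeping in your last paragraph) is fine in spirit, but at the decisive step --- ruling out an irreducible factor $\pi$ of $P$ whose reflection $\refl{\pi}$ is not an associate of $\pi$ --- you only say you ``expect to need an extra input,'' and the input you point to (the propositions forbidding factors $z-z_0$ with $z_0\in\mathbb{D}$, etc., of elements of $\gkboxuperprt_\mu$, $\gkboxrperpdn_\mu$, \dots) cannot fill it: those results are about special linear factors of elements of particular orthogonal complements and say nothing about an arbitrary irreducible non-symmetric divisor of the gcd of $\gkbox_\mu$. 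The paper's proof closes this with a modulus-swap plus extremal-choice argument that is absent from your proposal: since $|\pi|=|\refl{\pi}|$ on $\mathbb{T}^2$, exchanging a factor $\refl{\pi}$ for $\pi$ inside any element of $\gkbox_\mu$ changes neither membership in $L^2(\mu)$ nor the degree bound $(n,m)$. If $\pi^j$ exactly divides $P$ and $\pi,\refl{\pi}$ are not associates, then $\refl{\pi}^{\,j}$ also divides $P$ (by symmetry of $P$); choosing $p\in\gkbox_\mu$ divisible by the maximal power $\pi^k$ and writing $p=\pi^k\refl{\pi}^{\,j}g$, the swapped polynomial $\pi^{k+j}g$ again lies in $\gkbox_\mu$, contradicting the maximality of $k$. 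Your alternative target, ``produce an element of $\gkbox_\mu$ not divisible by the full $P$,'' is a legitimate way to get a contradiction, but you never produce such an element, and nothing you cite does it for you. (The same swap trick, applied with $|z|=1$ on $\mathbb{T}^2$, also rules out monomial factors of $P$, which patches the small hole in concluding $P=c\,\refl{P}$: if $z$ or $w$ divided $P$, the reflection $\refl{P}$ drops degree and you only get $\refl{P}\mid P$, not equality up to a unimodular constant.)

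The second claim is essentially correct in your proposal and matches the paper: a factor $\pi$ of $P$ with no zeros on $\mathbb{T}^2$ is bounded below there, so $\pi g\in L^2(\mu)$ forces $g\in L^2(\mu)$; dividing any nonzero $p\in\gkbox_\mu$ by $\pi$ stays in $\gkbox_\mu$, hence is again divisible by $P$ and so by $\pi$, and iterating makes a fixed nonzero polynomial of bounded degree divisible by $\pi^t$ for every $t$, which is absurd unless $\pi$ is constant. Two small corrections to your write-up of this part: the detour through ``$\refl{\pi}$ also divides $P$ by factorwise symmetry'' is circular at that point (factorwise symmetry is exactly the unproved first claim) and is unnecessary, and the contradiction is the infinite divisibility just described, not a failure of ``nontriviality of $\gkbox_\mu$'' or a dimension count coming from Theorem \ref{rearrangethm}.
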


\begin{proof}
  The greatest common divisor $P$ is necessarily
  $\mathbb{T}^2$-symmetric (basically since the set $\gkbox_\mu$ is).
  Let $q$ be an irreducible factor of $P$ and let $j$ be the highest
  power such that $q^j$ divides $P$.  Suppose $q$ is not a multiple of
  $\refl{q}$. Then $q^j\refl{q}^j$ divides $P$. Let $p$ be an element
  of $\gkbox_\mu$ divisible by the maximal number of factors of $q$;
  i.e. $q^k$ divides $p$ and no nonzero element of $\gkbox_\mu$ is
  divisible by $q^{k+1}$.  Since $\refl{q}^j$ divides $p$ we may write
  $p = q^k \refl{q}^j g$ for some $g \in \mathbb{C}[z,w]$.  Since $|q|
  = |\refl{q}|$ on $\mathbb{T}^2$, it follows that $p$ being in
  $L^2(\mu)$ implies $q^{k+j} g \in L^2(\mu)$.  In particular,
  $q^{k+j} g \in \gkbox_\mu$ contradicting the maximality property of
  $p$ and $k$.  Hence, $q$ must be $\mathbb{T}^2$-symmetric.

The zero set of every factor $q$ of $P$ must intersect $\mathbb{T}^2$
since otherwise $qg \in L^2(\mu)$ implies $g \in L^2(\mu)$ for any
$g\in \mathbb{C}[z,w]$.   
\end{proof}

\begin{question} \label{gcdquestion} Is $P$ toral? i.e. does the zero
  set of every factor of $P$ intersect $\mathbb{T}^2$ on an infinite
  set?
\end{question}

This question is made more difficult by the fact that there exist
irreducible, atoral, $\mathbb{T}^2$-symmetric polynomials:
\[
p(z,w) = (3z+1)w^2 - (z+3)(3z+1)w + z(z+3)
\]
is such a polynomial taken from Agler-M\mcc Carthy-Stankus\cite{AMS08}.

\section{What happens when $\gkboxurperpdn_\mu = \gkboxrperpdn_\mu$?} \label{sec:ao}

In this section we explore the implications of the assumption
$\gkboxurperpdn_\mu = \gkboxrperpdn_\mu$.  The most immediate
consequence of this assumption is the formula
\[
\begin{aligned}
  q(z,w)& \overline{q(Z,W)} - \refl{q}(z,w) \overline{\refl{q}(Z,W)} \\
  =& (1-z \bar{Z}) K\gkboxrperpdn_\mu((z,w),(Z,W)) + (1-w \bar{W})
  K\gkboxuperprt_\mu((z,w),(Z,W))\\
\end{aligned}
\]
where $q$ is any unit norm polynomial in $\gkboxllperp_\mu$.  This is
just Theorem \ref{rearrangethm} with $\epsilon = 0$ (as mentioned
there, $\gkboxurperpdn_\mu = \gkboxrperpdn_\mu$ implies $\epsilon
=0$).  Evaluating on the diagonal $(z,w) = (Z,W)$ we have
\begin{align} \label{zerosineq}
|q(z,w)|^2 \geq& |q(z,w)|^2 - |\refl{q}(z,w)|^2 \\
 = &(1-|z|^2)K\gkboxrperpdn_\mu((z,w),(z,w)) \nonumber \\
 & + (1-|w|^2)K\gkboxuperprt_\mu ((z,w),(z,w)) \geq 0 \nonumber
\end{align}
for all $(z,w) \in \cbidisk$.  If we scrutinize this inequality, we
can prove something quite strong.

\begin{prop} \label{zerosprop} Suppose $\gkboxurperpdn_\mu =
  \gkboxrperpdn_\mu$ and let $q$ be any unit norm polynomial in
  $\gkboxllperp_\mu$.  If $q(z_0,w_0) = 0$ for some $(z_0,w_0) \in
  \cbidisk$, then every element of $\gkbox_\mu$ vanishes at
  $(z_0,w_0)$.
\end{prop}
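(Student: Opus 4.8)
The plan is to exploit the inequality \eqref{zerosineq} together with the invertibility/full-rank machinery already established. First I would observe that since $q(z_0,w_0)=0$ with $(z_0,w_0)\in\cbidisk$, evaluating \eqref{zerosineq} at $(z_0,w_0)$ forces
\[
0 = (1-|z_0|^2)K\gkboxrperpdn_\mu((z_0,w_0),(z_0,w_0)) + (1-|w_0|^2)K\gkboxuperprt_\mu((z_0,w_0),(z_0,w_0)),
\]
and since both kernel terms are nonnegative on $\cbidisk$ and both coefficients $1-|z_0|^2,1-|w_0|^2$ are nonnegative, each summand vanishes. A diagonal value of a reproducing kernel is $\sum_j|e_j(z_0,w_0)|^2$ in any orthonormal basis, so wherever the coefficient is strictly positive the corresponding subspace consists entirely of polynomials vanishing at $(z_0,w_0)$.

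Next I would split into cases according to where $(z_0,w_0)$ lies. If $(z_0,w_0)\in\mathbb{D}^2$ then both coefficients are positive, so every element of $\gkboxrperpdn_\mu$ and of $\gkboxuperprt_\mu$ vanishes at $(z_0,w_0)$; but then I need to propagate this to all of $\gkbox_\mu$. Here I would use the rearrangement identities: $\gkbox_\mu = \gkboxsm_\mu + \gkboxrperpdn_\mu + \big(\text{a }w\text{-multiple piece}\big)$, or more carefully build up $\gkbox_\mu$ from $\gkboxsm_\mu$, $\gkboxrperpdn_\mu$, $\gkboxuperprt_\mu$ and shifts by $z,w$, so it suffices to handle $\gkboxsm_\mu$. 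For that I would go down in degree: $\gkboxsm_\mu\subset\gkboxr_\mu$, and I'd want to run the same kernel argument (or a divisor-of-ideal argument) at lower bidegree, or alternatively note that $K\gkboxsm_\mu$ appears in the first displayed form of Theorem \ref{rearrangethm} (with $\epsilon=0$), so the vanishing of the left side together with vanishing of $K\gkboxrperpdn_\mu$ and $K\gkboxuperplt_\mu$ terms on the diagonal forces $(1-|z_0|^2)(1-|w_0|^2)K\gkboxsm_\mu((z_0,w_0),(z_0,w_0))=0$, hence $K\gkboxsm_\mu$ vanishes at $(z_0,w_0)$ too when $(z_0,w_0)\in\mathbb{D}^2$. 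Combining, every element of $\gkboxsm_\mu$, $\gkboxrperpdn_\mu$, $\gkboxuperplt_\mu$ vanishes at $(z_0,w_0)$, and since these, together with the coordinate shifts, span $\gkbox_\mu$, we are done in the interior case.

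The boundary cases are where I expect the real work. If $(z_0,w_0)\in\mathbb{D}\times\mathbb{T}$, only the coefficient $1-|z_0|^2$ is positive, so directly I only learn that every element of $\gkboxrperpdn_\mu$ vanishes at $(z_0,w_0)$; I need a separate argument for the rest. The key point should be Lemma \ref{zeroslemma}: since (as one shows) the unit-norm generator $q$ of $\gkboxllperp_\mu$ has no zeros on $\mathbb{D}^2$ (this needs to be extracted — it follows because \eqref{zerosineq} gives $|q|^2\ge|\refl q|^2$ on $\cbidisk$, combined with a degree/normalization argument, or it is part of the structure theory referenced), a zero at $(z_0,w_0)\in\mathbb{D}\times\mathbb{T}$ forces $(w-w_0)\mid q$. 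I would then argue that $w-w_0$ is a divisor of $\mathcal{I}_\mu$ (because $q\mid$ everything relevant, or because $q/(w-w_0)$ stays in $L^2(\mu)$ so $(w-w_0)$ is a divisor of the ideal), and invoke Proposition \ref{fullrankprop}: the matrix $E(w)$ representing a basis of $\gkboxrperpup_\mu$ has full rank at $w_0$, which will pin down that all of $\gkbox_\mu$ must be divisible by $w-w_0$ and hence vanish at $(z_0,w_0)$. Similarly $(z_0,w_0)\in\mathbb{T}\times\mathbb{D}$ uses the $z$-version, and $(z_0,w_0)\in\mathbb{T}^2$ combines both. The main obstacle is handling $\mathbb{T}^2$ and the mixed boundary correctly: there neither coefficient is strictly positive (or only one is), so the kernel inequality gives no information by itself, and the entire argument must be routed through "$q$ has no zeros on $\mathbb{D}^2$" $\Rightarrow$ "$(z-z_0)$ or $(w-w_0)$ divides $q$" $\Rightarrow$ "it is a divisor of $\mathcal{I}_\mu$" $\Rightarrow$ "it divides every element of $\gkbox_\mu$" via Propositions \ref{fullrankprop} and the factor-restriction proposition; getting that chain airtight, especially the passage from "$q$ generates $\gkboxllperp_\mu$" to the needed divisibility of the whole space, is where I'd spend the most care.
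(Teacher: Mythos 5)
Your interior case is essentially the paper's argument, but two things need attention. The smaller one: your spanning claim at the end of the interior case is not justified as stated. The orthogonal decomposition is $\gkbox_\mu = \gkboxsm_\mu\oplus\gkboxuperplt_\mu\oplus\gkboxurperpdn_\mu\oplus\gkboxurperp_\mu$, and the piece $\gkboxurperp_\mu$, spanned by $\refl{q}$, is not obviously in the span of $\gkboxsm_\mu$, $\gkboxrperpdn_\mu$, $\gkboxuperplt_\mu$ and coordinate shifts; you must separately record that $\refl{q}(z_0,w_0)=0$, which does follow from \eqref{zerosineq} since the left side is $-|\refl{q}(z_0,w_0)|^2\le 0$ while the right side is $\ge 0$. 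The paper handles this by working with the diagonal kernel identities \eqref{whatfollows1} and \eqref{whatfollows2}, the latter of which contains the term $\refl{q}\overline{\refl{q}}$ explicitly, and concludes $K\gkbox_\mu(v,v)=0$.

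The serious gap is your boundary case. Your route through Lemma \ref{zeroslemma} requires that $q$ have no zeros on $\mathbb{D}^2$, but that is not a hypothesis of the proposition and is in general false at this level of generality: when $q$ and $\refl{q}$ share a ($\mathbb{T}^2$-symmetric) factor, $q\in\gkboxllperp_\mu$ can vanish inside the bidisk, and the statement that $q$ has no interior zeros in the coprime case is proved later (Corollary \ref{factorcorollary} and the proposition following it) \emph{using} Proposition \ref{zerosprop}, so extracting it here is circular. Moreover, the subsequent links in your chain do not hold: $(w-w_0)\mid q$ does not make $w-w_0$ a divisor of $\mathcal{I}_\mu$ (Lemma \ref{BernSzegdivisors} concerns Bernstein--Szeg\H{o} measures, which are not available here, and it gives the divisor property precisely \emph{away} from $\pi_2(Z_q\cap\mathbb{T}^2)$, i.e.\ not at the boundary zero you care about), and full rank of $E(w_0)$ in Proposition \ref{fullrankprop} would say that the basis of $\gkboxrperpup_\mu$ does \emph{not} all vanish on $\{w=w_0\}$ --- it cannot yield "everything is divisible by $w-w_0$," which in any case is more than the proposition asserts (vanishing at the single point suffices). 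The correct and much shorter argument, which is the paper's, stays with \eqref{zerosineq}: at a boundary zero $v$ the left side $|q|^2$ vanishes to order at least two along any approach to $v$, while $1-|z|^2$ and $1-|w|^2$ vanish to order at most one; since both terms on the right are nonnegative on $\cbidisk$, this forces $\refl{q}(v)=K\gkboxrperpdn_\mu(v,v)=K\gkboxuperprt_\mu(v,v)=0$ (i.e.\ \eqref{somezeros}) exactly as in the interior case, and then \eqref{whatfollows1} and \eqref{whatfollows2} give $K\gkbox_\mu(v,v)=0$, so every element of $\gkbox_\mu$ vanishes at $v$.
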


\begin{proof}
Two formulas will be useful in what follows:
\begin{equation} \label{whatfollows1}
K\gkboxuperprt_\mu - K\gkboxuperplt_\mu = (1-|z|^2) K\gkboxsm_\mu
\end{equation}
and
\begin{equation} \label{whatfollows2}
K\gkbox_\mu = K\gkboxsm_\mu + K\gkboxuperplt_\mu + K\gkboxurperpdn_\mu
+ \refl{q}\overline{\refl{q}}
\end{equation}
where every reproducing kernel is evaluated on the diagonal $(z,w) =
(Z,W)$. The first formula follows from the fact that
\[
\gkboxu_\mu = \gkboxuperprt_\mu \oplus (z\gkboxsm_\mu) =
\gkboxuperplt_\mu \oplus \gkboxsm_\mu
\]
and the second follows from the fact that
\[
\gkbox_\mu = \gkboxsm_\mu\oplus \gkboxuperplt_\mu \oplus
\gkboxurperpdn_\mu \oplus \gkboxurperp_\mu.
\]

First, suppose $(z_0,w_0) \in \mathbb{D}^2$. We write $v =(z_0,w_0)$
for short. From \eqref{zerosineq}, it is immediate that $q(v) = 0$
implies
\begin{equation} \label{somezeros}
\refl{q}(v) = K\gkboxrperpdn_\mu(v,v) =
K\gkboxuperprt_\mu(v,v) = 0.
\end{equation}
This is enough to force $K\gkbox_\mu(v, v) = 0$ by formulas
\eqref{whatfollows1} and \eqref{whatfollows2}.  Indeed,
$K\gkboxuperprt_\mu(v,v) = 0$ implies $K\gkboxuperplt_\mu(v,v) =
K\gkboxsm_\mu(v,v) = 0$ by \eqref{whatfollows1} (using the fact that
reproducing kernels are non-negative on the diagonal).  Then,
\eqref{whatfollows2} implies $K\gkbox_\mu(v,v) = 0$ since
$K\gkboxurperpdn_\mu = K\gkboxrperpdn_\mu$ by assumption.  If
$K\gkbox_\mu (v,v) = 0$ then every element of $\gkbox_\mu$ must vanish
at $v$.

To prove the claim for $v = (z_0,w_0) \in \cbidisk\setminus \mathbb{D}^2$,
notice that the left hand side of \eqref{zerosineq} vanishes to order
at least two at $v$, and the terms $(1-|z|^2)$ and $(1-|w|^2)$
can vanish to order at most one.  This again implies \eqref{somezeros}
and by a similar argument $K\gkbox_\mu(v,v) = 0$.

This proves every element of $\gkbox_\mu$ vanishes at a zero of $q$ in
$\cbidisk$.  
\end{proof}

\begin{remark} \label{stableremark} 
If $\mu$ is a finite measure, then $1 \in \gkbox_\mu$ and this implies
$q$ has no zeros on the closed bidisk. Hence, this proves stability in
the case of probability measures, as in Geronimo-Woerdeman \cite{GW04}
and Knese \cite{gK08a}.
\end{remark}

\begin{corollary} \label{factorcorollary} Suppose $\gkboxurperpdn_\mu
  = \gkboxrperpdn_\mu$ and let $q$ be any unit norm polynomial in
  $\gkboxllperp_\mu$.  Then, $q$ can be factored into $q = q_1
  q_2$ where
\begin{itemize} 
\item $q_1$ divides every element of $\gkbox_\mu$;
\item every irreducible factor of $q_1$: is $\mathbb{T}^2$-symmetric,
  has infinitely many zeros in $\cbidisk$, and vanishes somewhere on
  $\mathbb{T}^2$; and
\item $q_2$ has no zeros in $\cbidisk \setminus \mathbb{T}^2$ and
  finitely many zeros in $\mathbb{T}^2$.
\end{itemize}
\end{corollary}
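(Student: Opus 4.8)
The plan is to leverage Proposition \ref{zerosprop} together with the structural results from Section \ref{genprop} (Propositions \ref{gcdprop} and the factor-constraint proposition, plus Lemma \ref{zeroslemma}) to peel off the ``bad'' part of $q$.

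First I would let $P$ be the greatest common divisor of the subspace $\gkbox_\mu$ (as in Proposition \ref{gcdprop}). By Proposition \ref{zerosprop}, every zero of $q$ in $\cbidisk$ is a common zero of all elements of $\gkbox_\mu$, hence a zero of $P$; moreover, counted with multiplicity, if $q$ vanishes to order $k$ at a point $v \in \cbidisk$ then (by the same argument applied to all of $\gkbox_\mu$, or by noting that the inequality \eqref{zerosineq} forces the relevant kernels to vanish to the appropriate order) $P$ vanishes there to at least that order. The cleanest route is actually to argue directly that $q$ divides... no — rather, that $\gcd(q,P)$ absorbs \emph{all} zeros of $q$ lying in $\cbidisk\setminus\mathbb T^2$ as well as all but finitely many of those on $\mathbb T^2$. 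So I would set $q_1 := \gcd(q, P)$ (up to unimodular constant) and $q_2 := q/q_1$, and then verify the three bulleted properties.

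The first bullet, that $q_1$ divides every element of $\gkbox_\mu$, is immediate since $q_1 \mid P$ and $P$ divides every element of $\gkbox_\mu$ by definition. The second bullet follows from Proposition \ref{gcdprop}: every irreducible factor of $P$ is $\mathbb{T}^2$-symmetric and has a zero on $\mathbb T^2$; and an irreducible $\mathbb T^2$-symmetric polynomial with a zero on $\mathbb T^2$ must have infinitely many zeros there — here I would invoke the fact (used implicitly around Question \ref{gcdquestion}) that the zero set is an algebraic curve, so if it meets $\mathbb T^2$ in a set that is not finite it is infinite, and $\mathbb T^2$-symmetry forces the intersection to be infinite unless... this is precisely the subtlety flagged in Question \ref{gcdquestion}, so I should be careful: the paper only claims each factor of $q_1$ has \emph{infinitely many zeros in $\cbidisk$}, not necessarily infinitely many on $\mathbb T^2$. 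For that weaker claim: since the factor is $\mathbb T^2$-symmetric and $q$ has no zeros in $\mathbb D^2$, the factor has no zeros in $\mathbb D^2$ either, but being $\mathbb T^2$-symmetric its zero set is invariant under $(z,w)\mapsto(1/\bar z,1/\bar w)$, so it also has no zeros outside $\cbidisk$ except... no, I must think again — a $\mathbb T^2$-symmetric polynomial dividing $q$ has all its zeros on $\cbidisk\cap\{$reflection-invariant set$\}$, and combined with Lemma \ref{zeroslemma} (no zeros on $\mathbb D\times\mathbb T$ or $\mathbb T\times\mathbb D$ unless a coordinate line divides it, which a $\mathbb T^2$-symmetric irreducible factor of an atoral-or-not $q$ generally isn't), the zeros must accumulate on $\mathbb T^2$; an irreducible curve meeting $\mathbb T^2$ in infinitely many points has infinitely many zeros in $\cbidisk$ trivially, and if it met $\mathbb T^2$ finitely it would be atoral but then Proposition \ref{gcdprop}'s proof shows it still divides $\gkbox_\mu$ — and I'd need the \emph{infinitely many zeros in $\cbidisk$} to come from the curve itself being unbounded or wrapping around, which for an irreducible $\mathbb T^2$-symmetric polynomial not equal to a unit is automatic since plane curves are infinite sets and the reflection-invariance plus no-zeros-in-$\mathbb D^2$ confines the infinitely many zeros to $\cbidisk$. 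That last point — confining infinitely many zeros to $\cbidisk$ — is where I'd spend the most care.

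The third bullet: $q_2 = q/q_1$ has, by construction of $q_1$ as the gcd of $q$ with $P$ and the fact that $P$ captures \emph{all} zeros of $q$ in $\cbidisk\setminus\mathbb T^2$ (with multiplicity, via Proposition \ref{zerosprop} applied carefully), no zeros in $\cbidisk\setminus\mathbb T^2$; and it has finitely many zeros on $\mathbb T^2$ because $q = q_1 q_2$ has no zeros in $\mathbb D^2$, so any irreducible factor of $q_2$ with infinitely many zeros on $\mathbb T^2$ would be toral and $\mathbb T^2$-symmetric (a toral factor of a polynomial with no zeros on $\mathbb D^2$ is $\mathbb T^2$-symmetric — this is the standard toral/atoral dichotomy, cf. Definition \ref{def:toral} and Remark \ref{finiteremark}), hence by Proposition \ref{zerosprop} it would divide all of $\gkbox_\mu$, hence divide $P$, hence divide $q_1$, contradicting that $q_2 = q/q_1$ is coprime to $q_1$ in that factor. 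So $q_2$ is atoral, i.e. has finitely many zeros on $\mathbb T^2$, and we are done. The main obstacle, to reiterate, is pinning down the multiplicity bookkeeping in the reduction ``$q$ vanishes at $v\in\cbidisk \Rightarrow$ $P$ vanishes at $v$ to at least the same order,'' and the topological claim that an irreducible $\mathbb T^2$-symmetric non-unit polynomial has infinitely many zeros \emph{inside} $\cbidisk$.
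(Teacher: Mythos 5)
Your plan has genuine gaps, and two of them are fatal as written. First, you repeatedly invoke the premise that ``$q$ has no zeros in $\mathbb{D}^2$'' (to argue that a $\mathbb{T}^2$-symmetric factor has no zeros in $\mathbb{D}^2$, and again in your third bullet to argue that a factor of $q_2$ with infinitely many torus zeros is toral and symmetric). The corollary does not assume this: zero-freeness on the bidisk is part of the \emph{conclusion}, and only for $q_2$. A unit-norm $q\in\gkboxllperp_\mu$ can perfectly well vanish inside $\mathbb{D}^2$ --- that happens exactly when $q_1$ is nontrivial (e.g.\ a factor such as $z-w$) --- so every step resting on that premise collapses. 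Second, taking $q_1:=\gcd(q,P)$ with $P$ the gcd of $\gkbox_\mu$ does not deliver the second bullet: Proposition \ref{gcdprop} only gives that each factor of $P$ is $\mathbb{T}^2$-symmetric and has \emph{some} zero on $\mathbb{T}^2$, not infinitely many zeros in $\cbidisk$; whether factors of $P$ must even be toral is precisely the open Question \ref{gcdquestion}, so nothing forces your $q_1$ to consist only of factors with infinitely many zeros in $\cbidisk$. Relatedly, your claim that ``$P$ captures all zeros of $q$ in $\cbidisk\setminus\mathbb{T}^2$ with multiplicity'' is not justified by Proposition \ref{zerosprop}: that proposition gives pointwise vanishing of every element of $\gkbox_\mu$ at each zero of $q$ in $\cbidisk$, and a gcd of a family need not vanish at a common zero of the family (compare $z$ and $w$ at the origin). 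To pass from common zeros to divisibility one needs an irreducible factor of $q$ sharing \emph{infinitely many} zeros with each element of $\gkbox_\mu$, and to get the right multiplicity one needs more still --- the very ``multiplicity bookkeeping'' you flag as the main obstacle and leave unresolved.

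The paper avoids all of this by defining the factorization the other way around: $q_1$ is the product, with multiplicity, of the irreducible factors of $q$ having infinitely many zeros in $\cbidisk$, so the ``infinitely many zeros'' bullet holds by construction. For such a factor $f$, Proposition \ref{zerosprop} shows every element of $\gkbox_\mu$ vanishes at infinitely many points of the irreducible curve $Z_f$, hence $f$ divides every element; multiplicity is handled by dividing $f$ out of both sides of \eqref{zerosineq} and repeating the argument, which gives that $q_1$ divides every element of $\gkbox_\mu$, and then Proposition \ref{gcdprop} supplies $\mathbb{T}^2$-symmetry and a torus zero for each factor. For the third bullet, $q_2$ has finitely many zeros in $\cbidisk$ by construction, hence no zeros in $\mathbb{D}^2$ (an interior zero lies on a whole complex curve), and then Lemma \ref{zeroslemma} rules out zeros on $\mathbb{D}\times\mathbb{T}$ and $\mathbb{T}\times\mathbb{D}$, since a zero there would force a factor $z-z_0$ or $w-w_0$ and thus infinitely many zeros in $\cbidisk$. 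If you want to keep your gcd-based definition you would have to prove both the multiplicity statement and the torality-type statement for factors of $\gcd(q,P)$, neither of which you have.
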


\begin{proof}
It is clear $q$ may be factored into the form $q = q_1 q_2$ where
every irreducible factor of $q_1$ has infinitely many zeros in
$\cbidisk$ and $q_2$ has finitely many zeros in $\cbidisk$ (we of
course allow for the case where $q_1$ or $q_2$ is a constant). 

Suppose $f$ is an irreducible factor of $q$ possessing infinitely many
zeros in $\cbidisk$; i.e. a factor of $q_1$.  By Proposition
\ref{zerosprop}, every element of $\gkbox_\mu$ has infinitely many
zeros in common with $f$ and hence $f$ divides every element of
$\gkbox_\mu$. This implies $f$ can be divided out of both sides of the
inequality \eqref{zerosineq} and using the resulting inequality one
can then show that if $f$ occurs in the factorization of $q$ with
multiplicity, it then divides every element of $\gkbox_\mu$ with the
same multiplicity.  This implies $q_1$ divides every element of
$\gkbox_\mu$.  By Proposition \ref{gcdprop}, any such $f$ necessarily
is $\mathbb{T}^2$-symmetric and vanishes somewhere on $\mathbb{T}^2$.
This proves the first two items in the statement of the corollary.

Finally, if $q_2$ has finitely many zeros in $\cbidisk$, $q_2$ can
have no zeros in the bidisk.  By Lemma \ref{zeroslemma}, $q_2$ can
have no zeros on the sides: $\mathbb{D}\times \mathbb{T}$ and
$\mathbb{T}\times\mathbb{D}$.  This proves the third item.
\end{proof}

Since the factor $q_1$ in the above corollary divides every element of
$\gkbox_\mu$, the study of $\mu$ and $\gkbox_\mu$ can be separated
into the study of $q_1$ and the study of $|q_1|^2d\mu$ and the set
$\gkbox_\mu/q_1$ (which is nothing more than all $p \in L^2(|q_1|^2
d\mu)$ of degree less than or equal to $(n-n_1,m-m_1)$, where
$(n_1,m_1)$ is the degree of $q_1$).  Indeed, the map sending
\[
f \in \gkbox_\mu \mapsto f/q_1 \in \gkbox_\mu/q_1
\]
is an isometry (using the inner product of $L^2(\mu)$ on the left and
the inner product of $L^2(|q_1|^2 d\mu)$ on the right).  Although this
is a somewhat trivial observation, we now feel justified in making the
assumption that $\gkboxurperp_\mu$ and $\gkboxllperp_\mu$ have no
common factor.  This is equivalent to saying $q$ and $\refl{q}$ have
no common factor, which is equivalent to saying $q_1$ is a constant.
In this case the following proposition is immediate, since the
assumption implies $q = q_2$ in Corollary \ref{factorcorollary}.

\begin{prop} If $\gkboxurperpdn_\mu = \gkboxrperpdn_\mu$ and if
  $\gkboxurperp_\mu$ and $\gkboxllperp_\mu$ are one-dimensional and
  have no factor in common, then any $q \in \gkboxllperp_\mu$ has no
  zeros on $\overline{\mathbb{D}}^2\setminus \mathbb{T}^2$ and
  finitely many zeros on $\mathbb{T}^2$.
\end{prop}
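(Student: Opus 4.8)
The plan is to read this off from Corollary \ref{factorcorollary}; the only real task is to translate the hypothesis that the one-dimensional spaces $\gkboxurperp_\mu$ and $\gkboxllperp_\mu$ ``have no factor in common'' into a statement about a single polynomial and its reflection, after which the corollary does all the work.

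First I would record that reflection at the $(n,m)$ degree, $p \mapsto \refl{p}$, is an isometric involution of $L^2(\mu)$ (reflection is $L^2(\mu)$-isometric, as noted after Theorem \ref{rearrangethm}, and $\refl{\refl{p}}=p$) which maps $\gkbox_\mu$ onto itself and interchanges $\gkboxll_\mu$ and $\gkboxur_\mu$, since it sends a polynomial of degree $\le (n,m)$ with vanishing constant term to one with vanishing $z^nw^m$-coefficient and vice versa. Hence it carries $\gkboxllperp_\mu = \gkbox_\mu \ominus \gkboxll_\mu$ onto $\gkboxurperp_\mu = \gkbox_\mu \ominus \gkboxur_\mu$. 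So if $q$ is a unit-norm spanning vector of the one-dimensional space $\gkboxllperp_\mu$, then $\refl{q}$ spans $\gkboxurperp_\mu$, and the hypothesis that these two spaces have no common factor says exactly that $q$ and $\refl{q}$ have no nonconstant common factor. Since zero sets are unaffected by scaling, it suffices to prove the conclusion for this particular $q$.

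Next I would invoke Corollary \ref{factorcorollary}, whose hypotheses are in force: it yields a factorization $q = q_1 q_2$ in which every irreducible factor of $q_1$ is $\mathbb{T}^2$-symmetric, while $q_2$ has no zeros in $\cbidisk \setminus \mathbb{T}^2$ and only finitely many on $\mathbb{T}^2$. A product of $\mathbb{T}^2$-symmetric polynomials is, at its own degree, a unimodular constant times its own reflection, so $q_1 = c\,\refl{q_1}$ at $\deg q_1$ for some unimodular $c$. Writing the reflection of $q$ at the ambient degree as a product of the reflections of $q_1$ and $q_2$ at complementary degrees — the routine monomial bookkeeping $z^nw^m\overline{q(1/\bar z,1/\bar w)} = \bigl(z^{a}w^{b}\overline{q_1(1/\bar z,1/\bar w)}\bigr)\bigl(z^{n-a}w^{m-b}\overline{q_2(1/\bar z,1/\bar w)}\bigr)$ with $a\ge\deg_z q_1$ and $b\ge\deg_w q_1$ — one reads off that $q_1 \mid \refl{q}$. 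Together with the trivial $q_1 \mid q$, the no-common-factor hypothesis forces $q_1$ to be a constant, so $q$ equals $q_2$ up to a nonzero scalar, and the desired conclusion is precisely what Corollary \ref{factorcorollary} asserts for $q_2$.

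I do not expect a genuine obstacle: the one point needing a little care is the interaction between the two notions of reflection — at the ambient degree $(n,m)$ versus at a factor's own degree — used to pass from $q_1 \mid q$ to $q_1 \mid \refl{q}$, but this is just the elementary multiplicativity $\refl{(fg)} = \refl{f}\,\refl{g}$ with degrees added appropriately. Everything substantive has already been carried out in Corollary \ref{factorcorollary}.
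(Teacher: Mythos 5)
Your proposal is correct and follows essentially the same route as the paper: the paper deduces the proposition from Corollary \ref{factorcorollary} after noting that the no-common-factor hypothesis on $\gkboxurperp_\mu$ and $\gkboxllperp_\mu$ is equivalent to $q$ and $\refl{q}$ having no common factor, which forces $q_1$ to be constant so that $q=q_2$. You have merely filled in the details the paper calls ``immediate'' (reflection interchanging the two one-dimensional spaces, and $\mathbb{T}^2$-symmetry of $q_1$ giving $q_1\mid\refl{q}$), and these details are handled correctly.
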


\begin{lemma} \label{dimensionlemma} Suppose $\gkboxurperp_\mu$ is one
  dimensional and has no factor in common with $\gkboxllperp_\mu$, and
  suppose $\gkboxurperpdn_\mu = \gkboxrperpdn_\mu$.
  Then,
\[
\dim \gkboxrperpup_\mu = n \text{ and } \dim \gkboxuperplt_\mu = m.
\]
\end{lemma}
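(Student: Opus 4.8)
The plan is to prove the two equalities $\dim\gkboxrperpdn_\mu=n$ and $\dim\gkboxuperprt_\mu=m$; this suffices, because $\gkboxr_\mu=\gkboxsm_\mu\oplus\gkboxrperpdn_\mu=(w\gkboxsm_\mu)\oplus\gkboxrperpup_\mu$ and multiplication by $w$ is isometric, so $\dim\gkboxrperpup_\mu=\dim\gkboxr_\mu-\dim\gkboxsm_\mu=\dim\gkboxrperpdn_\mu$, and likewise $\dim\gkboxuperplt_\mu=\dim\gkboxu_\mu-\dim\gkboxsm_\mu=\dim\gkboxuperprt_\mu$. Set $a:=\dim\gkboxrperpdn_\mu$ and $b:=\dim\gkboxuperprt_\mu$; by Proposition \ref{fullrankprop} (first paragraph of its proof) $a=\dim\gkboxrperpup_\mu\le n$ and $b\le m$, so only the reverse inequalities remain. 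Since reflection at degree $(n,m)$ is unitary on $\gkbox_\mu$ and carries $\gkboxur_\mu$ onto $\gkboxll_\mu$, the hypothesis forces $\gkboxllperp_\mu$ to be one dimensional as well, so the proposition immediately preceding this lemma applies: a unit vector $q\in\gkboxllperp_\mu$ has no zeros on $\cbidisk\setminus\mathbb{T}^2$ and only finitely many on $\mathbb{T}^2$. Finally, $\gkboxurperpdn_\mu=\gkboxrperpdn_\mu$ makes the error term vanish, so \eqref{zerosineq} holds on $\cbidisk$:
\[
|q(z,w)|^2-|\refl{q}(z,w)|^2=(1-|z|^2)K\gkboxrperpdn_\mu((z,w),(z,w))+(1-|w|^2)K\gkboxuperprt_\mu((z,w),(z,w)).
\]

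The key idea is to \emph{slice}. Choose $w_0\in\mathbb{T}$ so that the circle $\mathbb{T}\times\{w_0\}$ avoids the finitely many zeros of $q$ on $\mathbb{T}^2$. Then $p(z):=q(z,w_0)$ is not identically zero, has no zeros on $\mathbb{T}$ (by the choice of $w_0$) and none in $\mathbb{D}$ (because $q$ has no zeros on $\mathbb{D}\times\{w_0\}\subset\cbidisk\setminus\mathbb{T}^2$), hence $p$ is zero free on $\overline{\mathbb{D}}$. Setting $w=w_0$ in the displayed identity, the second term drops, and writing $\refl{p}(z):=z^n\overline{p(1/\bar z)}$ for the reflection of $p$ at degree $n$ one checks $|\refl{q}(z,w_0)|=|\refl{p}(z)|$ for all $z$ (the factor $w_0^m$ is unimodular), so
\[
\frac{|p(z)|^2-|\refl{p}(z)|^2}{1-|z|^2}=K\gkboxrperpdn_\mu((z,w_0),(z,w_0)),\qquad z\in\mathbb{D}.
\]
By the classical one‑variable Christoffel--Darboux (Bernstein--Szeg\H o) formula for the zero‑free polynomial $p$ of degree $\le n$, the left side is the reproducing kernel on the diagonal of the $n$‑dimensional space of polynomials of degree $\le n-1$ inside the \emph{finite} measure $\frac{d\theta}{2\pi|p(e^{i\theta})|^2}$; writing it as $\bLam_n(z)^{*}P\bLam_n(z)$, the Hermitian matrix $P$ is the Gram matrix of the coefficient vectors of a basis of that space, so $\operatorname{rank}P=n$. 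On the other hand, if $E(w)$ is the $a\times n$ matrix polynomial with $E(w)\bLam_n(z)$ an orthonormal basis of $\gkboxrperpdn_\mu$, the right side equals $\bLam_n(z)^{*}E(w_0)^{*}E(w_0)\bLam_n(z)$. Equating the two Hermitian forms on an open subset of $\mathbb{C}$ forces $E(w_0)^{*}E(w_0)=P$, so $n=\operatorname{rank}P=\operatorname{rank}E(w_0)^{*}E(w_0)\le a$; hence $a=n$ and $\dim\gkboxrperpup_\mu=n$.

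Running the mirror argument — slicing at a generic $z_0\in\mathbb{T}$ whose circle $\{z_0\}\times\mathbb{T}$ misses the zeros of $q$, and applying one‑variable Christoffel--Darboux in $w$ to the zero‑free polynomial $q(z_0,\cdot)$ of degree $\le m$ — gives $b=m$, hence $\dim\gkboxuperplt_\mu=m$, completing the proof. The only substantive point is the slicing step itself: the hypotheses are used precisely to guarantee (via the preceding proposition) the existence of slices along which $q$ restricts to a polynomial that is zero free on the \emph{closed} disk, so that the sliced identity is a genuine one‑variable Christoffel--Darboux formula for a finite Bernstein--Szeg\H o measure, whose reproducing kernel has full rank. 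The bookkeeping with the degree‑$n$ reflection (needed should $q$ drop degree along the slice) is routine, since the extra monomial factor is unimodular on $\mathbb{T}$.
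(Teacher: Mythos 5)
Your proof is correct, but it takes a genuinely different route from the paper's. The paper argues by a lurking isometry: taking $q=\refl{h}$ for a unit vector $h\in\gkboxurperp_\mu$, it assembles the identity $q\bar q-\refl q\overline{\refl q}=(1-z\bar Z)K\gkboxrperpup_\mu+(1-w\bar W)K\gkboxuperplt_\mu$ into a unitary colligation and realizes $\refl q/q$ as a transfer function $A+B\Delta(z,w)(I-D\Delta(z,w))^{-1}C$ with $\Delta$ diagonal of size $d_1+d_2$; the no-common-factor hypothesis enters only through the fact that $\refl q/q$ is in lowest terms, which forces $d_1\geq n$, $d_2\geq m$, and Proposition \ref{fullrankprop} gives the reverse inequalities. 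You instead use the structural results already proved (Corollary \ref{factorcorollary} via the proposition immediately preceding the lemma) to know that the generator $q$ of $\gkboxllperp_\mu$ is zero-free on $\cbidisk\setminus\mathbb{T}^2$ with finitely many torus zeros, then slice the two-variable identity along a generic circle $\mathbb{T}\times\{w_0\}$ and invoke the one-variable Christoffel--Darboux formula to see a Hermitian form of full rank $n$ in $\bLam_n(z)$, which bounds $\dim\gkboxrperpdn_\mu$ from below; the mirror slice handles $m$. Your reduction $\dim\gkboxrperpup_\mu=\dim\gkboxrperpdn_\mu$, the passage from one-dimensionality of $\gkboxurperp_\mu$ to that of $\gkboxllperp_\mu$ (note the reflection is conjugate-linear, an anti-unitary, which is all you need for dimension transfer), and the ``padded'' one-variable formula $\bigl(|p(z)|^2-|z^n\overline{p(1/\bar z)}|^2\bigr)/(1-|z|^2)=\sum_{k=0}^{n-1}|\varphi_k(z)|^2$ for $p$ of degree $\leq n$ zero-free on $\overline{\mathbb{D}}$ all check out (the degree-$n$ orthonormal polynomial for $|p|^{-2}d\theta/2\pi$ is a monomial multiple of the reflection of $p$, whose reverse is $p$ itself). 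What each approach buys: the paper's argument is self-contained at the level of the identity itself, needs no information about the location of the zeros of $q$, and is the prototype of the realization technique reused elsewhere (e.g.\ sufficiency in Agler's theorem); yours is more elementary in that it leans on classical OPUC and on the zero-set structure already established, makes visible exactly where the no-common-factor hypothesis is used (to guarantee good slices), and gives the concrete extra information that the evaluation matrices $G(w_0)$ have full rank at generic unimodular $w_0$, in the spirit of Proposition \ref{fullrankprop}.
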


\begin{proof}  
  Let $h$ be a unit norm polynomial in $\gkboxurperp_\mu$.  This
  polynomial $h$ necessarily has degree exactly $(n,m)$, otherwise it
  would be orthogonal to itself.  Set $q = \refl{h}$, where the
  reflection is performed at the $(n,m)$ level.  By Theorem
  \ref{rearrangethm} with $\epsilon =0$,
\[
\begin{aligned}
&q(z,w)\overline{q(Z,W)} - \refl{q}(z,w) \overline{\refl{q}(Z,W)}  \\
& = (1-z\bar{Z}) K\gkboxrperpup_\mu ((z,w),(Z,W)) + (1-w\bar{W})
K\gkboxuperplt_\mu ((z,w),(Z,W)).
\end{aligned}
\]

Let $d_1 = \dim \gkboxrperpup_\mu$ and $d_2 = \dim \gkboxuperplt_\mu$;
let $e_1, \dots, e_{d_1}$ be an orthonormal basis for
$\gkboxrperpup_\mu$ and $f_1, \dots, f_{d_2}$ be an orthonormal basis
for $\gkboxuperplt_\mu$.  We write these vectorially as
\[
\vec{E}(z,w) = \begin{pmatrix} e_1(z,w) \\ \vdots \\
  e_{d_1}(z,w) \end{pmatrix} \text{ and } \vec{F}(z,w) = \begin{pmatrix}
  f_1(z,w) \\ \vdots \\ f_{d_2} (z,w) \end{pmatrix}
\]
and then the formula above becomes
\[
\begin{aligned}
  &q(z,w)\overline{q(Z,W)} - \refl{q}(z,w) \overline{\refl{q}(Z,W)}  \\
  & = (1-z\bar{Z}) \ip{\mathbf{E}(z,w)}{\mathbf{E}(Z,W)} +
  (1-w\bar{W}) \ip{\mathbf{F}(z,w)}{\mathbf{F}(Z,W)}.
\end{aligned}
\]
Upon rearranging we have
\[
\begin{aligned}
q(z,w)\overline{q(Z,W)} &+ z\bar{Z}
\ip{\mathbf{E}(z,w)}{\mathbf{E}(Z,W)} + w\bar{W}
\ip{\mathbf{F}(z,w)}{\mathbf{F}(Z,W)} \\
=\refl{q}(z,w) \overline{\refl{q}(Z,W)}
&+\ip{\mathbf{E}(z,w)}{\mathbf{E}(Z,W)}
+\ip{\mathbf{F}(z,w)}{\mathbf{F}(Z,W)}
\end{aligned}
\]

The map which sends 
\[
\begin{pmatrix} q(z,w) \\ z\vec{E}(z,w) \\ w
  \vec{F}(z,w) \end{pmatrix} \mapsto \begin{pmatrix} \refl{q}(z,w) \\
  \vec{E}(z,w) \\ \vec{F}(z,w) \end{pmatrix}
\]
for each $(z,w) \in \mathbb{C}^2$ defines a unitary on the span of the
elements in $\mathbb{C}^{1+d_1+d_2}$ of the form on the left to the
span of the elements in $\mathbb{C}^{1+d_1+d_2}$ of the form on the
right, which can be extended to a $(1+d_1+d_2)\times (1+d_1+d_2)$
unitary matrix $U$. We write $U$ in block form as
\[
U = \begin{matrix} & \begin{matrix} \mathbb{C} &
    \mathbb{C}^{d_1+d_2} \end{matrix} \\
\begin{matrix} \mathbb{C} \\ \mathbb{C}^{d_1+d_2} \end{matrix}
& \begin{pmatrix} A & B \\ C & D \end{pmatrix} \end{matrix}
\]
We also define a $\mathbb{C}^{d_1+d_2}$-valued polynomial
$\vec{G}$ by
\[
\vec{G}(z,w) := \begin{pmatrix} \vec{E}(z,w) \\
  \vec{F}(z,w) \end{pmatrix}
\]
and define the $(d_1+d_2)\times (d_1+d_2)$ diagonal matrix
\[
\Delta(z,w) := \begin{pmatrix} zI_{d_1} & 0 \\ 0 & w
  I_{d_2} \end{pmatrix}.
\]

Then,
\[
\begin{aligned}
A q(z,w) +B \Delta (z,w) \vec{G}(z,w) &= \refl{q}(z,w) \\
C q(z,w) +D \Delta (z,w) \vec{G}(z,w) &= \vec{G}(z,w)
\end{aligned}
\]

The latter formula implies
\[
\vec{G}(z,w) = q(z,w) (I - D \Delta(z,w))^{-1} C 
\]
and in turn the former formula implies
\[
A+B \Delta(z,w) (I-D \Delta(z,w))^{-1} C =
\frac{\refl{q}(z,w)}{q(z,w)}.
\]

Since $\refl{q}/q$ is already in reduced terms we must have $d_1\geq
n$ and $d_2 \geq m$.  We already know $d_1 \leq n$ and $d_2 \leq m$
(see Proposition \ref{fullrankprop}). Therefore, $n = \dim
\gkboxrperpup_\mu$ and $m= \dim \gkboxuperprt_\mu$, and the result
follows.
\end{proof}

\begin{theorem}[``Spectral Matching''] \label{spectralmatching} Let $\mu$ and $\rho$ be two
  positive Borel measures satisfying
  \begin{equation} \label{spectralassumption} \gkboxurperpdn_{\mu} =
    \gkboxrperpdn_{\mu} \qquad \gkboxurperpdn_{\rho} =
    \gkboxrperpdn_{\rho}.
\end{equation}
Suppose $\gkboxllperp_{\mu} = \gkboxllperp_\rho \ne \{0\}$ and let $q
\in \gkboxllperp_\mu$.  Assume $q$ and $\refl{q}$ have no common
factor. Then, $\gkbox_\mu = \gkbox_\rho$ and the inner products
$\ip{}{}_\mu$ and $\ip{}{}_\rho$ agree up to a constant multiple on
$\gkbox_\mu$; i.e.
\[
\frac{1}{||q||^2_{L^2(\mu)}} \ip{f}{g}_\mu =
\frac{1}{||q||^2_{L^2(\rho)}} \ip{f}{g}_\rho
\]
for all $f,g \in \gkbox_\mu$.  In other words,
\[
\frac{1}{||q||^2_{L^2(\mu)}} K\gkbox_\mu =
\frac{1}{||q||^2_{L^2(\rho)}} K\gkbox_\rho.
\]
\end{theorem}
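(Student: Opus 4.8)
The plan is to exploit the key identity from Theorem \ref{rearrangethm}, which under the hypothesis \eqref{spectralassumption} (so that $\epsilon=0$) gives, for any unit norm $q \in \gkboxllperp_\mu = \gkboxllperp_\rho$, the same left hand side $q\bar q - \refl q \overline{\refl q}$ expressed two ways:
\[
(1-z\bar Z) K\gkboxrperpup_\mu + (1-w\bar W) K\gkboxuperplt_\mu
= (1-z\bar Z) K\gkboxrperpup_\rho + (1-w\bar W) K\gkboxuperplt_\rho,
\]
where I normalize $q$ to have unit norm in $L^2(\mu)$ on the left and observe that $q/\|q\|_{L^2(\rho)}$ plays the analogous role on the right; since $\gkboxllperp_\mu$ and $\gkboxllperp_\rho$ are the same one-dimensional space, the $q$ appearing is the same up to the normalizing constants, and the ratio $\|q\|^2_{L^2(\mu)}/\|q\|^2_{L^2(\rho)}$ is the constant that must come out in the end. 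First I would set up orthonormal bases $\mathbf E_\mu, \mathbf F_\mu$ (resp.\ $\mathbf E_\rho, \mathbf F_\rho$) for $\gkboxrperpup_\mu, \gkboxuperplt_\mu$ (resp.\ for $\rho$), writing the reproducing kernels as $\langle \mathbf E(z,w), \mathbf E(Z,W)\rangle$ etc., so the displayed identity becomes an equality of the ``$(1-z\bar Z)(\cdot) + (1-w\bar W)(\cdot)$'' form in the $(z,w),(Z,W)$ variables.

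Second, I would apply Lemma \ref{dimensionlemma}: under the stated hypotheses $\dim\gkboxrperpup_\mu = \dim\gkboxrperpup_\rho = n$ and $\dim\gkboxuperplt_\mu=\dim\gkboxuperplt_\rho=m$, so $\mathbf E_\mu,\mathbf E_\rho \in \mathbb C^n[z,w]$ both have degree at most $(n-1,m)$ and, when written as $\mathbf E_\mu = E_\mu(w)\bLam_n(z)$, the matrix $E_\mu(w)$ is invertible for $w\in\mathbb D$ by Proposition \ref{fullrankprop}; likewise for $\rho$. Now from the common identity, setting $|w|=1$ and cancelling $(1-|z|^2)$ — as in the proof of the Uniqueness Lemma \ref{uniquenesslemma} — gives $|\mathbf E_\mu(z,w)|^2 = |\mathbf E_\rho(z,w)|^2$ for $(z,w)\in\mathbb C\times\mathbb T$. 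Wait: I need these two quantities to actually be equal as functions, not merely to have the same ``$\mathbf E$-part''; this is where I must be careful, because the splitting of $q\bar q - \refl q\overline{\refl q}$ into its two pieces is not a priori unique. The clean way is: Lemma \ref{hardunique} applied symmetrically gives rational inner $\Psi_1,\Psi_2 : \mathbb D \to \mathbb C^{n\times n}$ with $\mathbf E_\rho = \Psi_1(w)\mathbf E_\mu$ and $\mathbf E_\mu = \Psi_2(w)\mathbf E_\rho$, hence $\Psi_1\Psi_2 = I$, forcing $\Psi_1,\Psi_2$ to be constant unitaries, so $|\mathbf E_\mu|^2 = |\mathbf E_\rho|^2$ on all of $\mathbb C^2$ and by polarization $\langle\mathbf E_\mu(z,w),\mathbf E_\mu(Z,W)\rangle = \langle\mathbf E_\rho(z,w),\mathbf E_\rho(Z,W)\rangle$, i.e.\ $K\gkboxrperpup_\mu = K\gkboxrperpup_\rho$; then subtracting from the common identity and cancelling $(1-w\bar W)$ (valid off $\mathbb T^2$, then everywhere by continuity/polarization) gives $K\gkboxuperplt_\mu = K\gkboxuperplt_\rho$ as well.

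Third, I would assemble these into equality of $K\gkbox_\mu$ and $K\gkbox_\rho$ using the decomposition from the proof of Proposition \ref{zerosprop}:
\[
K\gkbox_\mu = K\gkboxsm_\mu + K\gkboxuperplt_\mu + K\gkboxurperpdn_\mu + \refl q\,\overline{\refl q},
\]
together with $\gkboxurperpdn_\mu = \gkboxrperpdn_\mu$ and $K\gkboxuperprt_\mu - K\gkboxuperplt_\mu = (1-|z|^2)K\gkboxsm_\mu$ (equivalently the polarized version $(1-z\bar Z)K\gkboxsm_\mu$). Concretely: from $K\gkboxuperplt_\mu = K\gkboxuperplt_\rho$ and the relation $\gkboxuperprt_\mu = \gkboxuperplt_\mu\oplus\gkboxsm_\mu$ (and $= \gkboxuperplt_\mu$-plus-$z\gkboxsm_\mu$) I recover $K\gkboxsm_\mu = K\gkboxsm_\rho$ and $K\gkboxuperprt_\mu = K\gkboxuperprt_\rho$; then the first displayed line of Theorem \ref{rearrangethm} (with $\epsilon=0$) reads
\[
q\bar q - \refl q\,\overline{\refl q} = (1-z\bar Z)(1-w\bar W)K\gkboxsm_\mu + (1-z\bar Z)K\gkboxrperpdn_\mu + (1-w\bar W)K\gkboxuperplt_\mu,
\]
and since $\gkboxrperpdn_\mu\subset\gkboxrperpup_\mu$-type relations plus the already-established equalities pin down $K\gkboxrperpdn_\mu = K\gkboxrperpdn_\rho = K\gkboxurperpdn_\rho$, formula \eqref{whatfollows2} gives $K\gkbox_\mu = K\gkbox_\rho$ — all after normalizing $q$ to unit norm in each measure, which is exactly what produces the factor $1/\|q\|^2_{L^2(\mu)}$ versus $1/\|q\|^2_{L^2(\rho)}$. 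Finally, equality of reproducing kernels of finite-dimensional polynomial subspaces forces the subspaces themselves to be equal with the same inner product, giving $\gkbox_\mu = \gkbox_\rho$ and $\frac{1}{\|q\|^2_{L^2(\mu)}}\langle f,g\rangle_\mu = \frac{1}{\|q\|^2_{L^2(\rho)}}\langle f,g\rangle_\rho$ on $\gkbox_\mu$. The main obstacle I anticipate is precisely the non-uniqueness of the two-term splitting in Theorem \ref{rearrangethm}: getting from ``the sums agree'' to ``the individual $K$'s agree'' requires the invertibility/dimension input of Lemma \ref{dimensionlemma} and Proposition \ref{fullrankprop} fed through Lemma \ref{hardunique}, and bookkeeping which cancellation ($(1-z\bar Z)$ vs $(1-w\bar W)$ vs the product) is legitimate on which set.
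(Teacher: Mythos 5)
Your core argument is exactly the paper's: normalize $q$, apply Theorem \ref{rearrangethm} with $\epsilon=0$ to both measures to obtain a common identity
$(1-z\bar Z)K\gkboxrperpup_\mu+(1-w\bar W)K\gkboxuperplt_\mu=(1-z\bar Z)K\gkboxrperpup_\rho+(1-w\bar W)K\gkboxuperplt_\rho$,
then feed Lemma \ref{dimensionlemma} and Proposition \ref{fullrankprop} into the uniqueness machinery (your inline rerun of Lemma \ref{hardunique} with $\Psi_1\Psi_2=I$ forcing constant unitaries is precisely the proof of Lemma \ref{uniquenesslemma}, which the paper simply cites) to conclude $K\gkboxrperpup_\mu=K\gkboxrperpup_\rho$ and $K\gkboxuperplt_\mu=K\gkboxuperplt_\rho$. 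That is the heart of the theorem and you have it right, including the cancellation of $(1-|z|^2)$ on $\mathbb{C}\times\mathbb{T}$ that makes Lemma \ref{hardunique} applicable.

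The assembly step, however, contains a genuine misstep as written. The relation you invoke, ``$\gkboxuperprt_\mu=\gkboxuperplt_\mu\oplus\gkboxsm_\mu$,'' is not a correct subspace identity (the correct ones are $\gkboxu_\mu=\gkboxuperplt_\mu\oplus\gkboxsm_\mu=\gkboxuperprt_\mu\oplus z\gkboxsm_\mu$), and even from the correct relations the single equality $K\gkboxuperplt_\mu=K\gkboxuperplt_\rho$ only yields $K\gkboxu_\mu-K\gkboxsm_\mu=K\gkboxu_\rho-K\gkboxsm_\rho$, not the separate equalities of $K\gkboxsm$ and $K\gkboxuperprt$ you claim to ``recover''; likewise there is no inclusion of $\gkboxrperpdn_\mu$ in $\gkboxrperpup_\mu$ available to ``pin down'' $K\gkboxrperpdn$. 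The missing ingredient is the reflection argument: reflection is an $L^2$-isometry for each measure and carries $\gkboxuperplt_\mu$ onto $\gkboxuperprt_\mu$ and $\gkboxrperpup_\mu$ onto $\gkboxrperpdn_\mu=\gkboxurperpdn_\mu$ (reflecting a subspace reflects its reproducing kernel, as noted after Theorem \ref{rearrangethm}). Reflecting your two established kernel equalities gives $K\gkboxuperprt_\mu=K\gkboxuperprt_\rho$ and $K\gkboxurperpdn_\mu=K\gkboxurperpdn_\rho$; then $K\gkboxuperprt_\mu-K\gkboxuperplt_\mu=(1-|z|^2)K\gkboxsm_\mu$ yields $K\gkboxsm_\mu=K\gkboxsm_\rho$, and $K\gkbox_\mu=K\gkboxsm_\mu+K\gkboxuperplt_\mu+K\gkboxurperpdn_\mu+\refl{q}\,\overline{\refl{q}}$ gives $K\gkbox_\mu=K\gkbox_\rho$, hence $\gkbox_\mu=\gkbox_\rho$ with the normalized inner products agreeing. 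With that one repair (which is exactly the paper's ``by reflection'' step), your proof coincides with the paper's.
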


\begin{proof}
  We may renormalize $\mu$ and $\rho$ so that $1 = ||q||_{L^2(\mu)} =
  ||q||_{L^2(\rho)}$.

  By choosing orthonormal bases for the $n$-dimensional subspaces (by
  Lemma \ref{dimensionlemma}) $\gkboxrperpup_\mu$ and
  $\gkboxrperpup_\rho$ we may write
\[
K\gkboxrperpup_\mu ((z,w),(Z,W)) =
\ip{\mathbf{E}_\mu(z,w)}{\mathbf{E}_\mu(Z,W)}
\]
\[
K\gkboxrperpup_\rho ((z,w),(Z,W)) =
\ip{\mathbf{E}_\rho(z,w)}{\mathbf{E}_\rho(Z,W)}
\]
for $\mathbf{E}_\mu, \mathbf{E}_\rho \in \mathbb{C}^n[z,w]$. 

Likewise, we may write the $m$-dimensional subspaces
$\gkboxuperplt_\mu$ and $\gkboxuperplt_\rho$ as
\[
K\gkboxuperplt_\mu ((z,w),(Z,W)) =
\ip{\mathbf{F}_\mu(z,w)}{\mathbf{F}_\mu(Z,W)} 
\]
\[
K\gkboxuperplt_\rho ((z,w),(Z,W)) =
\ip{\mathbf{F}_\rho(z,w)}{\mathbf{F}_\rho(Z,W)}
\]
where $\mathbf{F}_\mu, \mathbf{F}_\rho \in \mathbb{C}^m[z,w]$.

By Proposition \ref{fullrankprop}, both $\mathbf{E}_\mu,
\mathbf{F}_\mu$ and $\mathbf{E}_\rho, \mathbf{F}_\rho$ satisfy the
hypotheses of Lemma \ref{uniquenesslemma} (in place of $\mathbf{E},
\mathbf{F}$ and $\mathbf{\tilde{E}}, \mathbf{\tilde{F}}$), since by
Theorem \ref{rearrangethm}, we have
\[
\begin{aligned}
& (1-z \bar{Z})K\gkboxrperpup_\mu ((z,w),(Z,W)) + (1-w\bar{W}) K
\gkboxuperplt_\mu ((z,w),(Z,W)) \\
&= (1-z \bar{Z})K\gkboxrperpup_\rho ((z,w),(Z,W)) + (1-w\bar{W}) K
  \gkboxuperplt_\rho ((z,w),(Z,W))
\end{aligned}
\]

Therefore, $\mathbf{E}_\mu$ is a unitary multiple of $\mathbf{E}_\rho$
and $\mathbf{F}_\mu$ is a unitary multiple of $\mathbf{F}_\rho$.  In
other words,
\[
K\gkboxuperplt_\mu ((z,w),(Z,W)) = K\gkboxuperplt_\rho ((z,w),(Z,W))
\]
\begin{equation} \label{spectraleqn1}
K\gkboxrperpup_\mu ((z,w),(Z,W)) = K\gkboxrperpup_\rho ((z,w),(Z,W))
\end{equation}

Now we will see that this is all that is needed to reassemble the two
inner products on $\gkbox_\mu$ or $\gkbox_\rho$.

By reflection
\[
K\gkboxuperprt_\mu ((z,w),(Z,W)) = K\gkboxuperprt_\rho ((z,w),(Z,W))
\]
and by the formulas (which hold for both $\mu$ and $\rho$)
\[
K\gkboxuperprt_\mu - K\gkboxuperplt_\mu = (1-|z|^2) K\gkboxsm_\mu
\]
and
\[
K\gkbox_\mu = K\gkboxsm_\mu + K\gkboxuperplt_\mu + K\gkboxurperpdn_\mu
+ \refl{q}\overline{\refl{q}}
\]
where every reproducing kernel is evaluated on the diagonal $(z,w) =
(Z,W)$, we see that
\[
K\gkbox_\mu = K\gkbox_\rho.
\]
(This is similar to the argument in the proof of Proposition
\ref{zerosprop}.)
\end{proof}

\section{Bernstein-Szeg\H{o} measures} \label{sec:BernSzeg}

Converse to the previous section, we now study Bernstein-Szeg\H{o}
measures, which will be shown to satisfy $\gkboxurperpdn_\mu =
\gkboxrperpdn_\mu$.  Bernstein-Szeg\H{o} measures are measures on
$\mathbb{T}^2$ of the form
\[
d\mu = \frac{1}{|q(z,w)|^2} d\sigma(z,w)
\]
where $q \in \mathbb{C}[z,w]$ has no zeros on $\mathbb{D}^2$. (Recall
$d\sigma$ is normalized Lebesgue measure on $\mathbb{T}^2$.)

The following proposition looks innocuous, but it addresses the main
technical difficulty \emph{not present} in the case of polynomials
with no zeros on the entire \emph{closed} bidisk.  Note this
proposition does not require the polynomial to have finitely many
zeros on $\mathbb{T}^2$.

\begin{prop} \label{qrelations} Let $q \in \mathbb{C}[z,w]$ have
  degree at most $(n,m)$ and no zeros on $\mathbb{D}^2$.  Define a
  measure on $\mathbb{T}^2$ by
\[
d\mu = \frac{1}{|q(z,w)|^2} d\sigma(z,w)
\]
Then, $q \in \gkboxllperp_\mu$ and more generally
\[
q \perp_\mu \{f \in L^2(\mu) : \hat{f}(j,k) =0 \text{ for } k < 0
\text{ and for } k=0 \text{ and } j \leq 0 \}.
\]  
\end{prop}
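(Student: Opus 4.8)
The plan is to show $q \perp_\mu f$ for $f$ in the indicated set by a direct computation with Fourier coefficients, exploiting that $d\mu = d\sigma/|q|^2$ so that $\langle q, f\rangle_\mu = \int_{\mathbb{T}^2} q \bar f \, |q|^{-2} d\sigma = \int_{\mathbb{T}^2} \bar f / \bar q \, d\sigma = \overline{\int_{\mathbb{T}^2} f/q \, d\sigma}$. The first order of business is to make sense of this: $f/q$ need not be integrable if $q$ has zeros on $\mathbb{T}^2$, so I would work with radial approximations, replacing the integral over $\mathbb{T}^2$ by integrals over $r\mathbb{T}^2$ for $r < 1$ and letting $r \nearrow 1$, as is done in Example \ref{easyexample2}. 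Concretely, I would show $\int_{r\mathbb{T}^2} f/q \, d\sigma$ has a limit as $r \nearrow 1$ equal to $\langle f, q\rangle_\mu$ (which exists since $f, q \in L^2(\mu)$, by dominated convergence once we know $1/|q|^2$ is integrable against the relevant pieces), and separately compute the left-hand side directly.

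The key point is that $1/q$ is holomorphic on $\mathbb{D}^2$ (since $q$ has no zeros there) and bounded on $r\mathbb{D}^2$ for each fixed $r<1$; hence on $r\mathbb{T}^2$ it has an absolutely convergent Fourier/Taylor expansion $1/q(z,w) = \sum_{a,b \geq 0} c_{a,b} z^a w^b$ with $c_{0,0} = 1/q(0,0) \neq 0$. For $f$ with $\hat f(j,k) = 0$ whenever $k<0$, and whenever $k = 0$ and $j \leq 0$ — note $f$ is a polynomial, so this just says $f \in w\mathbb{C}[z,w] + z\mathbb{C}[z]$, i.e. $f$ is a linear combination of monomials $z^j w^k$ with either $k \geq 1$, or $k = 0$ and $j \geq 1$ — the integral $\int_{r\mathbb{T}^2} f(z,w)/q(z,w)\, d\sigma(z,w)$ picks out, by orthogonality of monomials on $r\mathbb{T}^2$, a sum of terms $\hat f(j,k) c_{a,b} r^{\text{(something)}}$ with $j + a = 0$, $k + b = 0$; since $a, b \geq 0$ this forces $j, k \leq 0$, and combined with the support condition on $f$ this forces the term to vanish (if $k < 0$ it's excluded outright; if $k = 0$ then $b = 0$ and we'd need $j \leq 0$, also excluded). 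So every radial integral is exactly $0$, hence the limit is $0$, hence $\langle f, q\rangle_\mu = 0$.

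The remaining obstacle — and I expect this to be the only real subtlety — is justifying the interchange of limit and integral, i.e. that $\lim_{r\nearrow 1}\int_{r\mathbb{T}^2} f\bar q /|q|^2\, d\sigma = \int_{\mathbb{T}^2} f \bar q/|q|^2\, d\sigma = \overline{\langle q,f\rangle_\mu}$. One clean way: since $f, q \in L^2(\mu)$, the function $f/q \cdot \overline{(q/q)} = f\bar q/|q|^2 = \overline{\bar f/\bar q}$... more straightforwardly, note $f \bar q / |q|^2 = f/q$ as a function on $\mathbb{T}^2$ (a.e.), and $f/q \in L^1(\mathbb{T}^2)$ because $f/q = (f/\|q\|_\mu) \cdot (\|q\|_\mu \bar q^{-1})$... actually the cleanest argument is that $f/q = (f/q)$ with $|f/q| = |f| \cdot |q|^{-1} \leq \tfrac12(|f|^2|q|^{-2} + 1) \in L^1$ using $f \in L^2(\mu)$ and $\sigma$ finite; then the radial limit of the harmonic (indeed holomorphic, as $f/q$ is holomorphic on $\mathbb{D}^2$) function $f/q$ converges in $L^1(\mathbb{T}^2)$ to its boundary values by standard Hardy space theory for $H^1(\mathbb{D}^2)$, or by dominated convergence after passing to a subsequence using Fatou's theorem. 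I would phrase it via: $f/q \in H^1(\mathbb{D}^2)$ (since it is holomorphic on $\mathbb{D}^2$ with $L^1$ boundary values by the above bound together with the fact that $\sup_r \int_{r\mathbb{T}^2}|f/q| < \infty$), and for $H^1$ functions $\int_{r\mathbb{T}^2} g \, d\sigma \to \hat g(0,0) = g(0,0)$; but here each $\int_{r\mathbb{T}^2} f/q\, d\sigma = 0$ by the monomial computation above, giving $\langle q,f\rangle_\mu = \overline{\int_{\mathbb{T}^2} f/q\, d\sigma} = 0$. The special case $f = q \cdot (\text{const})$... rather, taking $f$ a general element of $\gkboxll$, $f \in L^2(\mu)$, puts $q$ in $\gkboxllperp_\mu = \gkbox_\mu \ominus \gkboxll_\mu$, giving the first assertion.
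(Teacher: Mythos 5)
There are two genuine gaps here. First, a scope issue: the set in the proposition is $\{f\in L^2(\mu):\hat f(j,k)=0 \text{ for } k<0,\text{ and for } k=0,\ j\le 0\}$, i.e.\ arbitrary $L^2(\mu)$ functions with that spectral support, not polynomials. Your parenthetical ``note $f$ is a polynomial'' restricts to the polynomial case, which yields only the first assertion $q\in\gkboxllperp_\mu$; the ``more generally'' clause is exactly what gets used later (in Corollary \ref{perpcorollary} the proposition is applied to $\bar f g z^n w^m$ with $g\in H^\infty$, which is not a polynomial). Worse, for a general $f$ in this set the admissible frequencies include $k\ge 1$ with $j$ arbitrary (negative $j$ allowed), so $f$ has no holomorphic extension in $z$, $f/q$ is not a holomorphic function on $\mathbb{D}^2$, and your entire radial mean-value/monomial computation has no meaning in that generality.

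Second, even in the polynomial case the limiting step is not justified: you need $\lim_{r\nearrow 1}\int_{r\mathbb{T}^2} f/q\, d\sigma=\int_{\mathbb{T}^2} f/q\, d\sigma$, and for this you invoke $f/q\in H^1(\mathbb{D}^2)$ ``together with the fact that $\sup_r\int_{r\mathbb{T}^2}|f/q|<\infty$'' --- but that uniform bound on the radial means \emph{is} the assertion $f/q\in H^1$, so the argument is circular. Integrability of the boundary function alone does not put a holomorphic function in $H^1$ (in one variable $\exp\bigl((1+z)/(1-z)\bigr)$ has bounded boundary values but lies in no $H^p$); ruling out such behavior for $f/q$ is precisely where the outerness of $q$ must enter, and the proposed ``dominated convergence after Fatou'' has no dominating function. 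The paper avoids both problems by slicing: for a.e.\ $z\in\mathbb{T}$ the slice $f(z,\cdot)$ lies in $H^2(\mathbb{T})$ and $q(z,\cdot)$ is outer (no zeros in $\mathbb{D}$ for all but finitely many $z\in\mathbb{T}$, by Lemma \ref{zeroslemma}), so $f(z,\cdot)/q(z,\cdot)\in N^+\cap L^2=H^2$; integrating in $w$ first reduces the problem to the one-variable function $h(z)=f(z,0)/q(z,0)$, which is again in $N^+\cap L^2=H^2$ since $q(\cdot,0)$ is outer, giving $\int_{\mathbb{T}^2} f/q\, d\sigma=h(0)=\hat f(0,0)/q(0,0)=0$. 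If you want to salvage your outline, you would have to replace the $H^1(\mathbb{D}^2)$ claim by this kind of slice-by-slice Smirnov argument (or a carefully justified polydisc Smirnov theorem), and formulate the computation so that it applies to non-polynomial $f$ with negative $z$-frequencies.
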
 

\begin{proof} Let $f \in L^2(\mu)$ satisfy 
\[
\hat{f}(j,k) =0 \text{ for } k < 0 \text{ and for } k=0 \text{ and } j
\leq 0.
\]

It is necessarily true that $f\in L^2(\mathbb{T}^2)$.  For almost
every $z \in \mathbb{T}$, the function $f_{(z)} (w) = f(z,w)$ is in
$L^2(\mathbb{T})$ and since $\hat{f}(j,k) = 0$ for $k<0$, $f_{(z)}$ is
actually in $H^2(\mathbb{T})$ for almost every $z \in \mathbb{T}$.

This implies the function (of $w$)
\[
g_{(z)} (w) := \frac{f(z,w)}{q(z,w)}
\]
is in the Smirnov class $N^+$ (which consists of all ratios of bounded
analytic functions with outer denominator; see Duren \cite{pD70},
section 2.5) for almost every $z \in \mathbb{T}$: $q(z,\cdot)$ has no
zeros in the disk for all but finitely many $z \in \mathbb{T}$ (by
Lemma \ref{zeroslemma}) and is therefore \emph{outer} for almost every
$z \in \mathbb{T}$.  Since $f \in L^2(\mu)$, Fubini's theorem says that
for almost every $z \in \mathbb{T}$, we have $g_{(z)} \in
L^2(\mathbb{T})$. By Theorem 2.11 in Duren \cite{pD70}, $N^+ \cap
L^2(\mathbb{T}) = H^2(\mathbb{T})$, and therefore $g_{(z)} \in
H^2(\mathbb{T})$ for almost every $z \in \mathbb{T}$.

Owing to the fact that $g_{(z)}$ is orthogonal to $w^j$ for $j<0$,
\[
\begin{aligned}
  f(z,0) = \int_{\mathbb{T}} f(z,w) \frac{dw}{2\pi i w} &=
  \int_{\mathbb{T}} \frac{f(z,w)}{q(z,w)} q(z,w) \frac{dw}{2\pi i w} \\
  &= \int_{\mathbb{T}} \frac{f(z,w)}{q(z,w)} q(z,0) \frac{dw}{2\pi i
    w}
\end{aligned}
\]
for almost every $z \in \mathbb{T}$, and so
\[
\int_{\mathbb{T}^2} \frac{f(z,w)}{q(z,w)} \frac{dw}{2\pi i
    w} \frac{dz}{2 \pi i z} = \int_{\mathbb{T}} \frac{f(z,0)}{q(z,0)}
  \frac{dz}{2 \pi i z}. 
\]
Now, the function defined by $h(z) = f(z,0)/q(z,0)$ is in
$L^2(\mathbb{T})$ by Fubini's theorem.  Also, $h$ is in the Smirnov
class $N^+$ because $f(\cdot,0)$ is in $H^2(\mathbb{T})$ (by the
assumption that $\hat{f}(j,0) = 0$ for $j \leq 0$) and $q(\cdot, 0)$
is outer since $q(z,0)$ has no zeros in the disk.  Therefore, $h$ is
in $H^2(\mathbb{T})$. Thus, we may conclude
\[
\int_{\mathbb{T}^2} \frac{f(z,w)}{q(z,w)} \frac{dw}{2\pi i
    w} \frac{dz}{2 \pi i z} = \int_{\mathbb{T}} \frac{f(z,0)}{q(z,0)}
  \frac{dz}{2 \pi i z} = \frac{f(0,0)}{q(0,0)} = 0
\]
since $\hat{f}(0,0) = 0$.

Since 
\[
\ip{f}{q}_\mu = \int_{\mathbb{T}^2} \frac{f(z,w)
  \overline{q(z,w)}}{|q(z,w)|^2} d\sigma(z,w) = 
\int_{\mathbb{T}^2} \frac{f(z,w)}{q(z,w)} d\sigma(z,w)
\]
we have shown $\ip{f}{q}_\mu = 0$, or in other words $f \perp_\mu q$.

\end{proof}

From here, the proofs follow the stable case, as in Knese
\cite{gK08a}, with some minor changes.

\begin{corollary} \label{perpcorollary} If $f \in L^2(\mu) \cap
  H^2(\mathbb{T}^2)$ and
\[
\hat{f}(j,k) = 0 \text{ for } k > m \text{ and for } k=m \text{ and }
j\geq n ,
\]
then $\ip{f}{\refl{q} g}_\mu = 0$ for any $g \in
H^{\infty}(\mathbb{T}^2)$.
\end{corollary}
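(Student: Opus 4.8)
The plan is to deduce this from Proposition \ref{qrelations} by ``reflecting'' everything through the involution $(z,w)\mapsto(1/\bar z,1/\bar w)$. The key elementary observation is that on $\mathbb{T}^2$ one has $1/\bar z = z$ and $1/\bar w = w$, so that $\refl{q}(z,w) = z^n w^m\overline{q(z,w)}$ there and in particular $|\refl{q}| = |q|$ on $\mathbb{T}^2$; consequently multiplication by $\refl{q}$ is an isometry from $L^2(\mathbb{T}^2)$ into $L^2(\mu)$, and, more to the point, $\refl{q}$ can be traded for $z^nw^m\bar q$ inside integrals against $d\mu = |q|^{-2}d\sigma$.

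First I would rewrite the inner product. Using $\refl{q} = z^n w^m\bar q$ on $\mathbb{T}^2$ and cancelling one factor of $\bar q$ against $|q|^2 = q\bar q$,
\[
\overline{\ip{f}{\refl{q}\,g}_\mu} = \ip{\refl{q}\,g}{f}_\mu
= \int_{\mathbb{T}^2}\frac{z^n w^m\,\overline q\, g\,\overline f}{|q|^2}\,d\sigma
= \int_{\mathbb{T}^2}\frac{h}{q}\,d\sigma = \ip{h}{q}_\mu,
\qquad h := z^n w^m\,\overline f\, g.
\]
Note $h\in L^2(\mu)$, since $|h/q| = |g|\,|f|/|q|\le \|g\|_\infty\,|f/q|$ a.e.\ and $f/q\in L^2(\mathbb{T}^2)$ because $f\in L^2(\mu)$. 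So it remains only to check that $h$ lies in the subspace to which $q$ is orthogonal by Proposition \ref{qrelations}, that is, that $\hat h(j,k)=0$ for $k<0$ and for $k=0,\ j\le 0$.

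The remaining step is a Fourier-support computation. Because $f\in H^2(\mathbb{T}^2)$ with $\hat f(j,k)=0$ for $k>m$ and for $k=m,\ j\ge n$, the spectrum of $\overline f$ is contained in $\{(j,k): j\le 0,\ 1-m\le k\le 0\}\cup\{(j,-m): 1-n\le j\le 0\}$; translating by $(n,m)$ shows the spectrum of $z^nw^m\overline f$ lies in $\{(j,k): j\le n,\ 1\le k\le m\}\cup\{(j,0): 1\le j\le n\}$, which already sits in the half-plane $k\ge 0$ and whose $k=0$ slice is contained in $j\ge 1$. Since $g\in H^\infty$ has spectrum in $\{(a,b): a,b\ge 0\}$, and the spectrum of a product of a function in $L^2$ and a function in $L^\infty$ is contained in the (Minkowski) sum of the two spectra — the Fourier coefficients convolve — $h$ inherits $\hat h(j,k)=0$ for $k<0$, while its $k=0$ slice stays in $j\ge 1$, giving $\hat h(j,0)=0$ for $j\le 0$. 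Proposition \ref{qrelations} then yields $\ip{h}{q}_\mu = 0$, and hence $\ip{f}{\refl{q}\,g}_\mu = 0$.

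The only delicate point is the Fourier-support bookkeeping together with the remark that multiplying the $L^2$ function $z^nw^m\overline f$ by the bounded function $g$ adds only nonnegative frequencies. If one wishes to avoid even that, one can first reduce to the case of $g$ a polynomial (indeed a monomial $z^aw^b$): multiplication by $\refl{q}$ is an $L^2(\mathbb{T}^2)\to L^2(\mu)$ isometry and the Fourier truncations $S_Ng$ of $g\in H^\infty$ are such polynomials with $S_Ng\to g$ in $L^2(\mathbb{T}^2)$, so $\refl{q}\,S_Ng\to\refl{q}\,g$ in $L^2(\mu)$ and $\ip{f}{\refl{q}\,g}_\mu = \lim_N\ip{f}{\refl{q}\,S_Ng}_\mu$; for a monomial $g$ the convolution above is a finite sum and $h$ is visibly in $L^2(\mu)$.
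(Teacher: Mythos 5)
Your proposal is correct and follows essentially the same route as the paper: rewrite $\ip{f}{\refl{q}g}_\mu$ as $\ip{z^n w^m \bar{f} g}{q}_\mu$ and check that $z^n w^m \bar{f} g$ satisfies the hypotheses of Proposition \ref{qrelations}. The paper leaves the frequency-support bookkeeping and the membership $z^n w^m \bar f g \in L^2(\mu)$ as remarks ("draw a picture"), which you have simply carried out explicitly.
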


\begin{proof}
Notice that $\ip{\refl{q}g}{f}_\mu = \ip{\bar{f} g z^n w^m}{q}_\mu$.
Also, notice that $\bar{f} g z^n w^m$ satisfies the hypotheses of the
previous proposition (it helps to draw a picture of the frequency
support of $f$ and $\bar{f} g z^n w^m$).  Therefore, $\ip{f}{\refl{q}
g}_\mu = 0$.
\end{proof}

\begin{lemma} \label{Llemma}
Define
\begin{align}
L_{(Z,W)} (z,w) &= L((z,w),(Z,W)) \nonumber \\
&= (z\bar{Z})^n
\frac{q(z,w)\overline{q(1/\bar{z}, W)} - \refl{q}(z,w)
  \overline{\refl{q}(1/\bar{z}, W)}}{(1-z\bar{Z})(1-w\bar{W})}
\label{def:L} 
\end{align}

Suppose $f \in L^2(\mu)\cap H^2(\mathbb{T}^2)$ with
\[
\hat{f}(j,k) = 0 \text{ for } k > m \text{ and for } k=m \text{ and }
j \geq n.
\]
Then, for $(Z,W)\in \mathbb{D}^2$
\[
\sum_{k=0}^{m-1} \sum_{j=n}^{\infty} \hat{f}(j,k) Z^j W^k =
\ip{f}{L_{(Z,W)}}_\mu
\]
\end{lemma}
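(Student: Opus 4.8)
The plan is to interpret the kernel $L_{(Z,W)}$ as a reproducing kernel for a suitable projection and then verify the claimed identity by a direct computation using Proposition \ref{qrelations} and Corollary \ref{perpcorollary}. First I would unpack the definition \eqref{def:L}: using the sums-of-squares-type identity
\[
q(z,w)\overline{q(1/\bar z,W)} - \refl{q}(z,w)\overline{\refl{q}(1/\bar z,W)}
\]
and expanding the geometric series $\frac{1}{1-z\bar Z} = \sum_{a\geq 0}(z\bar Z)^a$ and $\frac{1}{1-w\bar W}=\sum_{b\geq 0}(w\bar W)^b$, one sees that $L_{(Z,W)}(z,w)$, as a function of $(z,w)$, is a polynomial of degree at most $(n-1,m-1)$ in $(z,w)$ (the factor $(z\bar Z)^n$ together with the numerator's structure kills the high- and negative-frequency terms in $z$, and the $\refl q$ term cancels the frequencies above $w^{m-1}$). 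So $L_{(Z,W)} \in \gkboxsm \subset L^2(\mu)$ for $(Z,W)\in\mathbb{D}^2$, and the inner product $\ip{f}{L_{(Z,W)}}_\mu$ makes sense.

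Next I would compute $\ip{f}{L_{(Z,W)}}_\mu$ directly. Writing $\mu = \frac{1}{|q|^2}d\sigma$, we have
\[
\ip{f}{L_{(Z,W)}}_\mu = \int_{\mathbb{T}^2} \frac{f(z,w)\,\overline{L_{(Z,W)}(z,w)}}{|q(z,w)|^2}\,d\sigma(z,w).
\]
Using conjugate symmetry of the expression, $\overline{L_{(Z,W)}(z,w)}$ involves $\overline{q(z,w)}$ and $q(1/\bar z,W)$-type terms; the key is that dividing by $|q(z,w)|^2$ leaves $\frac{1}{q(z,w)}$ times something holomorphic. I expect that after substituting the series expansion for $L$, the problematic high-frequency and $\refl{q}$-related pieces integrate to zero against $f$ by Proposition \ref{qrelations} (for the $q$ piece) and Corollary \ref{perpcorollary} (for the $\refl q$ piece), exactly the way the ``error terms'' were shown to vanish in the stable case in Knese \cite{gK08a}. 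What survives is precisely $\sum_{k=0}^{m-1}\sum_{j\geq n}\hat f(j,k)Z^jW^k$: the condition $\hat f(j,k)=0$ for $k>m$ and for $k=m,\ j\geq n$ limits which Fourier coefficients of $f$ appear, and the factor $(z\bar Z)^n$ shifts so that only $j\geq n$ contributes, while the $(1-w\bar W)^{-1}$ expansion picks out $0\leq k\leq m-1$.

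The main obstacle I anticipate is the bookkeeping of frequency supports: one must carefully track, for each term in the double geometric expansion, which monomial $z^a w^b$ it contributes and confirm that after multiplying by $1/q$ (inside the integral against $f/|q|^2 = (f/q)\cdot(1/\bar q)$) the orthogonality hypotheses of Proposition \ref{qrelations} and Corollary \ref{perpcorollary} genuinely apply — i.e., that the frequency support of the relevant integrand lies in the half-lattice where $q$ (resp.\ $\refl q$) is orthogonal. Drawing the frequency-support picture (as the excerpt repeatedly suggests) is the honest way to see this; the analytic input is entirely contained in the two cited results, so no new estimates are needed. A secondary, minor point is justifying the interchange of the infinite sum (geometric series) with the integral, which follows from $f\in L^2(\mu)$ and $(Z,W)\in\mathbb{D}^2$ giving absolute convergence, or alternatively by first proving the identity for the truncated (polynomial) kernel and passing to the limit.
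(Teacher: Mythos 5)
Your second paragraph is aimed in the right direction --- the paper's proof does exactly two things: it kills the $\refl{q}$-part of $L_{(Z,W)}$ using Corollary \ref{perpcorollary}, and then it \emph{evaluates} (not annihilates) the remaining $q$-part by iterated Cauchy integrals. But your first paragraph contains a genuine error. It is false that $L_{(Z,W)}$ is a polynomial of degree at most $(n-1,m-1)$, and false that it lies in $\gkboxsm$. The numerator of $L_{(Z,W)}$ is divisible by $(1-w\bar{W})$ (it vanishes at $w=1/\bar{W}$), so $L_{(Z,W)}$ is a polynomial of degree at most $m-1$ in $w$; but it is \emph{not} divisible by $(1-z\bar{Z})$, so in $z$ it is only a rational function with a pole at $1/\bar{Z}$ outside $\overline{\mathbb{D}}$. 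Concretely, for $q(z,w)=2-z-w$ ($n=m=1$) one computes $L_{(Z,W)}(z,w) = -2\bar{Z}(z-1)^2/(1-z\bar{Z})$, which is not a polynomial in $z$ at all, let alone of degree $0$. Moreover, even the inclusion ``$\gkboxsm\subset L^2(\mu)$'' that you use to conclude $L_{(Z,W)}\in L^2(\mu)$ is false for these Bernstein--Szeg\H{o} measures: $\mu$ may be infinite and, e.g., $1\notin L^2(\mu)$ in Example \ref{easyexample2}. The correct reason $\ip{f}{L_{(Z,W)}}_\mu$ makes sense is that $|\refl{q}|=|q|$ on $\mathbb{T}^2$, so the numerator of $L_{(Z,W)}$ is bounded by a constant (depending on $(Z,W)\in\mathbb{D}^2$) times $|q|$ on $\mathbb{T}^2$, i.e.\ $L_{(Z,W)}/q$ is bounded there; in the paper's later use (Theorem \ref{BernSzegthm}) the relevant statement is $L_{(Z,W)}\in\mathbf{HS}$, a half-strip of $H^2$ functions, not $\gkboxsm$.

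The second gap is in how you treat the surviving $q$-piece. Proposition \ref{qrelations} is an orthogonality statement ($q\perp_\mu$ a certain frequency half-lattice); it cannot by itself produce the \emph{nonzero} answer $\sum_{k=0}^{m-1}\sum_{j\geq n}\hat{f}(j,k)Z^jW^k$, and the $q$-part does not split into ``problematic pieces that vanish'' plus an obvious remainder. What is actually needed is the computation: after Corollary \ref{perpcorollary} removes the $\refl{q}$-term, the remaining integral is
\[
\int_{\mathbb{T}}\int_{\mathbb{T}} \frac{f(z,w)}{q(z,w)}\,q(z,W)\,\frac{(\bar{z}Z)^n}{(1-\bar{z}Z)(w-W)}\,\frac{dw}{2\pi i}\frac{dz}{2\pi i z},
\]
and one applies the Cauchy integral formula in $w$ --- which is legitimate because $f(z,\cdot)/q(z,\cdot)\in N^+\cap L^2(\mathbb{T})=H^2(\mathbb{T})$ for a.e.\ $z\in\mathbb{T}$ (the Smirnov-class argument that appears \emph{inside} the proof of Proposition \ref{qrelations}, using Lemma \ref{zeroslemma} to see $q(z,\cdot)$ is outer for a.e.\ $z$) --- giving $f(z,W)/q(z,W)$, after which the factor $q(z,W)$ cancels and a second coefficient extraction in $z$ against $(\bar{z}Z)^n/(1-\bar{z}Z)=\sum_{j\geq n}\bar{z}^jZ^j$ yields the stated sum, the $k=m$, $j\geq n$ terms dropping out by hypothesis. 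So the citation you need is not the statement of Proposition \ref{qrelations} but its Smirnov/Hardy-space mechanism, invoked as an explicit Cauchy-formula evaluation; without that step your proposal does not actually compute $\ip{f}{L_{(Z,W)}}_\mu$.
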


\begin{proof}  By Corollary \ref{perpcorollary}, $f$ is orthogonal to
  the function
\[
G_{(Z,W)} (z,w) = \frac{\refl{q}(z,w) z^n
  \overline{\refl{q}(1/\bar{z}, W)}}{(1-z\bar{Z})(1-w\bar{W})}
\]
for each $(Z,W) \in \mathbb{D}^2$.

Therefore, 
\begin{align}
  \ip{f}{L_{(Z,W)}}_{\mu} &= \int_{\mathbb{T}^2} \frac{f(z,w)
    \overline{q(z,w)} q(z,W) (\bar{z}Z)^n}{(1-\bar{z}Z)(1-\bar{w}W)
    |q(z,w)|^2} \frac{dw dz}{(2\pi i)^2 zw} \nonumber \\
  &= \int_{\mathbb{T}} \int_{\mathbb{T}} \frac{f(z,w) q(z,W)
    (\bar{z}Z)^n}{(1-\bar{z} Z)(w-W)
    q(z,w)} \frac{dw}{2\pi i} \frac{dz}{2 \pi i z} \label{therefore1} \\
  &= \int_{\mathbb{T}} \frac{f(z,W)}{q(z,W)} q(z,W)
  \frac{(\bar{z}Z)^n}{(1-\bar{z}Z)} \frac{dz}{2\pi i z}
    \label{therefore2} \\
&= \sum_{j=n}^{\infty} \sum_{k=0}^{m-1} \hat{f}(j,k) Z^j
    W^k. \label{therefore3} 
\end{align}
Going from \eqref{therefore1} to \eqref{therefore2} is an application
of the Cauchy integral formula and going from \eqref{therefore2} to
\eqref{therefore3} involves cancellation and another application of
the Cauchy integral formula.
\end{proof}

\begin{theorem} \label{BernSzegthm} Let $q$ be a nonzero polynomial of
  degree at most $(n,m)$ with no zeros on $\mathbb{D}^2$.  Define a
  measure on $\mathbb{T}^2$ by
\[
d\mu = \frac{1}{|q(z,w)|^2} d\sigma(z,w).
\]
Then, 
\[
\gkboxurperpdn_\mu = \gkboxrperpdn_\mu.
\]
\end{theorem}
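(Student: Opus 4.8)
The plan is to isolate a single analytic fact and then finish with linear algebra. Recall $\gkboxrperpdn_\mu = \gkboxr_\mu\ominus\gkboxsm_\mu$ and $\gkboxurperpdn_\mu = \gkboxur_\mu\ominus\gkboxu_\mu$. Every $f\in\gkboxur_\mu$, as well as every $f\in\gkboxu_\mu$, satisfies the frequency hypothesis of Lemma \ref{Llemma} (degree at most $(n,m)$ with $\hat f(n,m)=0$ is enough), so for $(Z,W)\in\mathbb{D}^2$
\[
\ip{f}{L_{(Z,W)}}_\mu = Z^n\sum_{k=0}^{m-1}\hat{f}(n,k)W^k .
\]
Write $P$ for the orthogonal projection of $L^2(\mu)$ onto $\gkboxur_\mu$ and put $\mathcal{L} := \operatorname{span}\{P L_{(Z,W)} : (Z,W)\in\mathbb{D}^2\}\subseteq\gkboxur_\mu$, a finite dimensional, hence closed, subspace. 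Since $P$ is self-adjoint and fixes $\gkboxur_\mu$, the identity above says that for $f\in\gkboxur_\mu$ one has $f\perp_\mu\mathcal{L}$ exactly when $\hat f(n,k)=0$ for $0\le k\le m-1$, i.e. (using $\hat f(n,m)=0$) exactly when $f\in\gkboxr_\mu$; likewise, for $f\in\gkboxu_\mu$, orthogonality to $\mathcal{L}$ is equivalent to $f\in\gkboxsm_\mu$. Hence $\gkboxur_\mu=\gkboxr_\mu\oplus\mathcal{L}$ orthogonally, while $\gkboxsm_\mu=\{h\in\gkboxu_\mu:h\perp_\mu\mathcal{L}\}$.

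The one thing left to prove is that $\mathcal{L}\subseteq\gkboxu_\mu$; granting it, the conclusion is formal. Indeed $\gkboxur_\mu=\gkboxr_\mu\oplus\mathcal{L}\subseteq\gkboxr_\mu+\gkboxu_\mu\subseteq\gkboxur_\mu$, so $\gkboxr_\mu+\gkboxu_\mu=\gkboxur_\mu$. Setting $\mathcal{M}:=\gkboxu_\mu\ominus\mathcal{L}$: since $\mathcal{M}\subseteq\gkboxu_\mu\subseteq\gkboxr_\mu\oplus\mathcal{L}$ and $\mathcal{M}\perp_\mu\mathcal{L}$, we get $\mathcal{M}\subseteq\gkboxr_\mu$, hence $\mathcal{M}\subseteq\gkboxr_\mu\cap\gkboxu_\mu=\gkboxsm_\mu$; conversely $\gkboxsm_\mu\subseteq\gkboxr_\mu$ is orthogonal to $\mathcal{L}$, so $\gkboxsm_\mu\subseteq\gkboxu_\mu\ominus\mathcal{L}=\mathcal{M}$. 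Thus $\gkboxu_\mu=\gkboxsm_\mu\oplus\mathcal{L}$, and since $\gkboxr_\mu\ominus\gkboxsm_\mu\subseteq\gkboxr_\mu$ is orthogonal to $\mathcal{L}$,
\[
\gkboxurperpdn_\mu=\gkboxur_\mu\ominus\gkboxu_\mu=(\gkboxr_\mu\oplus\mathcal{L})\ominus(\gkboxsm_\mu\oplus\mathcal{L})=\gkboxr_\mu\ominus\gkboxsm_\mu=\gkboxrperpdn_\mu .
\]

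So everything reduces to showing $P L_{(Z,W)}\in\gkboxu_\mu$ for $(Z,W)\in\mathbb{D}^2$, i.e. that this polynomial has $w$-degree at most $m-1$. I would read this off the defining formula \eqref{def:L}. Since $q$ has no zeros on $\mathbb{D}^2$, $q$ and $\refl{q}$ vanish at the same points of $\mathbb{T}^2$, so the numerator of \eqref{def:L} vanishes at those torus zeros and also where $(1-w\bar W)$ vanishes; dividing by $(1-w\bar W)$ therefore lowers the $w$-degree to $m-1$ and keeps the function in $L^2(\mu)$. Running the same Cauchy-integral/residue computation used to prove Lemma \ref{Llemma} — the step that exploits $q(z,\cdot)$ being outer in $w$, hence zero-free in $\mathbb{D}$, for a.e. $z\in\mathbb{T}$ — one checks that the $\gkboxur_\mu$-component of $L_{(Z,W)}$ is precisely such a polynomial. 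This residue computation, powered by Proposition \ref{qrelations} and the Smirnov-class facts behind Corollary \ref{perpcorollary}, is the heart of the matter and the main obstacle: it is exactly where the no-zeros-on-$\mathbb{D}^2$ hypothesis does its work. In the ``stable'' case ($q$ zero-free on $\cbidisk$) it disappears — then $1/q$ is bounded, $\gkbox_\mu$ is all polynomials of degree $\le(n,m)$, $L^2(\mu)$-membership is never in question, and $L_{(Z,W)}$ is visibly a polynomial of degree $(n,m-1)$ — which is why the argument here ``follows the stable case with minor changes.''
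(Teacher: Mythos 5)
Your linear algebra is fine, and the special case of Lemma \ref{Llemma} you quote is correct, but there is a genuine gap exactly at the point you label ``the one thing left to prove.'' The claim $\mathcal{L}\subseteq \gkboxu_\mu$ cannot be ``read off'' from \eqref{def:L}: what the divisibility of the numerator by $(1-w\bar W)$ gives is that $L_{(Z,W)}$ \emph{itself} lies in $L^2(\mu)$ with Fourier support in $\{j\ge 0,\ 0\le k\le m-1\}$; it says nothing about the Fourier support of its projection onto $\gkboxur_\mu$, because orthogonal projection in $L^2(\mu)$ does not respect Fourier support --- that is the whole difficulty of the theorem. In fact your claim is \emph{equivalent} to the hard inclusion: since $\gkboxur_\mu=\gkboxu_\mu\oplus\gkboxurperpdn_\mu$ and $Pf=f$ for $f\in\gkboxurperpdn_\mu$, one has $PL_{(Z,W)}\in\gkboxu_\mu$ for all $(Z,W)\in\mathbb{D}^2$ if and only if $\ip{f}{L_{(Z,W)}}_\mu=Z^n\sum_{k=0}^{m-1}\hat f(n,k)W^k$ vanishes for every $f\in\gkboxurperpdn_\mu$, i.e.\ if and only if every such $f$ lies in $\gkboxr_\mu$, which (as such $f$ are already orthogonal to $\gkboxsm_\mu$) is precisely $\gkboxurperpdn_\mu\subseteq\gkboxrperpdn_\mu$. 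So the ``residue computation'' you defer is the theorem itself, and no argument for it is given. (A side remark: $L_{(Z,W)}$ is never a polynomial in $z$, even when $q$ has no zeros on $\cbidisk$ --- the numerator of \eqref{def:L} does not vanish at $z=1/\bar Z$, so the factor $(1-z\bar Z)^{-1}$ contributes all nonnegative $z$-frequencies; that is why Lemma \ref{Llemma} has the sum $\sum_{j=n}^{\infty}$.)

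The paper circumvents exactly this obstruction by never projecting $L_{(Z,W)}$ onto a polynomial space. It introduces the infinite-dimensional ``half strip'' $\mathbf{HS}=\{f\in L^2(\mu)\cap H^2(\mathbb{T}^2):\hat f(j,k)=0 \text{ for } k\ge m\}$ and the ``notched half strip'' $\mathbf{NHS}$, notes that $L_{(Z,W)}$ belongs to $\mathbf{HS}$ outright (no projection needed), deduces from Lemma \ref{Llemma} that $\mathbf{NHS}\ominus_\mu\mathbf{HS}\subseteq\gkboxrperpdn_\mu$, and then uses the projection $P_{\mathbf{HS}}$ together with the elementary identities $\gkboxrperpdn_\mu\cap\mathbf{HS}=\{0\}$ and $\gkboxur_\mu\cap\mathbf{HS}=\gkboxu_\mu$ to obtain both inclusions $\gkboxrperpdn_\mu\subseteq\gkboxurperpdn_\mu$ and $\gkboxurperpdn_\mu\subseteq\gkboxrperpdn_\mu$. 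To salvage your finite-dimensional scheme you would have to prove $L_{(Z,W)}\perp_\mu\gkboxurperpdn_\mu$ directly, and the natural way to do that is to run the paper's $\mathbf{HS}$/$\mathbf{NHS}$ projection argument --- at which point the space $\mathcal{L}$ is no longer needed.
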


\begin{proof} Let
\[
\mathbf{HS} = \{f \in L^2(\mu)\cap H^2(\mathbb{T}^2) : \hat{f}(j,k) =
0 \text{ for } k \geq m \}
\]
($\mathbf{HS} = $ ``half strip'') and let
\begin{multline*}
\mathbf{NHS} = \\ \{f \in L^2(\mu)\cap H^2(\mathbb{T}^2) : \hat{f}(j,k) =
0 \text{ for } k > m \text{ and when } k=m \text{ and } j\geq n\}
\end{multline*}
($\mathbf{NHS} = $ ``notched half strip'').

We claim that $\mathbf{NHS} \ominus_\mu \mathbf{HS} =
\gkboxrperpdn_\mu$. To prove $\mathbf{NHS} \ominus_\mu \mathbf{HS}
\subset \gkboxrperpdn_\mu$, notice that $L_{(Z,W)}$ from Lemma
\ref{Llemma} is in $\mathbf{HS}$ since the numerator of $L_{(Z,W)}$
vanishes when $w = 1/\bar{W}$, and hence $L_{(Z,W)}$ is a polynomial
of degree at most $m-1$ in $w$.  So, if $f \in \mathbf{NHS}
\ominus_\mu \mathbf{HS}$, then
\[
0 = \ip{f}{L_{(Z,W)}}_\mu = \sum_{j=n}^{\infty} \sum_{k=0}^{m-1}
  \hat{f}(j,k) Z^j W^k
\]
which means $f \in \gkboxr_\mu$ and therefore $f \in
\gkboxrperpdn_\mu$.  This proves $\mathbf{NHS} \ominus_\mu \mathbf{HS}
\subset \gkboxrperpdn_\mu$.

To prove $\gkboxrperpdn_\mu \subset \mathbf{NHS} \ominus_\mu
\mathbf{HS}$, let $P_{\mathbf{HS}}:L^2(\mu) \to \mathbf{HS}$ denote
the orthogonal projection onto $\mathbf{HS}$, a necessarily closed
subspace of $L^2(\mu)$ (the topology on $L^2(\mu)$ is finer than the
topology on $L^2(\mathbb{T}^2)$).  If $f \in \gkboxrperpdn_\mu$ then
\[
f - P_{\mathbf{HS}}f \in \mathbf{NHS}\ominus_\mu \mathbf{HS} \subset
\gkboxrperpdn_\mu
\]
and this implies $P_{\mathbf{HS}} f \in \gkboxrperpdn_{\mu} \cap
\mathbf{HS} = \{0\}$.  Hence, $P_{\mathbf{HS}} f = 0$ which means $f
\perp_\mu \mathbf{HS}$.  In other words, $f \in
\mathbf{NHS}\ominus_\mu \mathbf{HS}$.  Hence, $\mathbf{NHS}\ominus_\mu
\mathbf{HS} = \gkboxrperpdn_\mu$.

Now, since $\gkboxrperpdn_\mu \subset \mathbf{NHS} \ominus_\mu
\mathbf{HS}$, it follows that $\gkboxrperpdn_\mu \subset
\gkboxurperpdn_\mu$.  A similar argument to the above (using the
projection $P_{\mathbf{HS}}$) proves $\gkboxurperpdn_\mu \subset
\mathbf{NHS} \ominus_\mu \mathbf{HS} = \gkboxrperpdn_\mu$.  This
implies $\gkboxrperpdn_\mu = \gkboxurperpdn_\mu$.
\end{proof}

\begin{corollary} \label{bernszegsos} Let $q$ be a nonzero polynomial
  of degree at most $(n,m)$ with no zeros on $\mathbb{D}^2$.  Define a
  measure on $\mathbb{T}^2$ by
\[
d\mu = \frac{1}{|q(z,w)|^2} d\sigma(z,w).
\]
Then, 
\[
\begin{aligned}
  q(z,w) &\overline{q(Z,W)} - \refl{q}(z,w) \overline{\refl{q}(Z,W)}
  \\
  =& (1-z\bar{Z}) K\gkboxrperpup_\mu ((z,w),(Z,W))
  + (1-w\bar{W}) K\gkboxuperplt_\mu ((z,w),(Z,W)). \\
\end{aligned}
\]
\end{corollary}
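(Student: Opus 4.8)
The plan is to combine Theorem~\ref{BernSzegthm}, which establishes $\gkboxurperpdn_\mu = \gkboxrperpdn_\mu$ for a Bernstein-Szeg\H{o} measure, with the general identity from Theorem~\ref{rearrangethm}. First I would check that the measure $d\mu = \frac{1}{|q(z,w)|^2}d\sigma$ satisfies the standing hypotheses needed to invoke Theorem~\ref{rearrangethm}: that $\mathbb{C}[z,w]\cap L^2(\mu) \ne \varnothing$ and that $\gkboxllperp_\mu$ is one-dimensional. The first is automatic since any polynomial multiple of $q$ lies in $L^2(\mu)$ (indeed $q\cdot\mathbb{C}[z,w]$ embeds into $L^2(\sigma)$); in particular $\gkbox_\mu \ne \{0\}$. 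For the one-dimensionality of $\gkboxllperp_\mu = \gkbox_\mu \ominus \gkboxll_\mu$, I would use Proposition~\ref{qrelations}, which gives $q \in \gkboxllperp_\mu$, so this space is at least one-dimensional; the upper bound comes from the standard argument that any two elements of $\gkbox_\mu$ with the same $(n,m)$-coefficient differ by an element of $\gkboxll_\mu$ together with the fact (again from Proposition~\ref{qrelations}, applied after noting $\gkboxllperp_\mu \perp \gkboxll_\mu$) that $\gkboxllperp_\mu$ is spanned by a single polynomial of full degree.

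Once the hypotheses are verified, I would apply Theorem~\ref{rearrangethm} to the unit-norm polynomial $q/\|q\|_{L^2(\mu)} \in \gkboxllperp_\mu$, using the form of the conclusion
\[
q\bar q - \refl{q}\,\overline{\refl{q}} = (1-z\bar Z)K\gkboxrperpup_\mu + (1-w\bar W)K\gkboxuperplt_\mu + \epsilon,
\]
where $\epsilon = (K\gkboxurperpdn_\mu - K\gkboxrperpdn_\mu) - (K\gkboxllperpup_\mu - K\gkboxlperpup_\mu)$. By Theorem~\ref{BernSzegthm} we have $\gkboxurperpdn_\mu = \gkboxrperpdn_\mu$, hence the first parenthesized difference of kernels vanishes. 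Reflecting the identity $\gkboxurperpdn_\mu = \gkboxrperpdn_\mu$ (reflection is an isometry on $L^2(\mu)$ since $|q|=|\refl q|$ on $\mathbb{T}^2$, so it preserves these subspace relations) yields $\gkboxllperpup_\mu = \gkboxlperpup_\mu$, so the second difference vanishes too. Therefore $\epsilon = 0$ and the stated formula follows immediately, after absorbing the normalization: since the displayed identity in Theorem~\ref{rearrangethm} is written for a \emph{unit norm} $q$, and both sides scale quadratically in $q$, using $q$ itself rather than $q/\|q\|$ multiplies the reproducing kernels by $\|q\|^2_{L^2(\mu)}$; but one checks this constant is~$1$ here because $\int_{\mathbb{T}^2}|q|^2 d\mu = \int_{\mathbb{T}^2} d\sigma = 1$.

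I expect the only genuine obstacle to be the verification that $\gkboxllperp_\mu$ is exactly one-dimensional, as opposed to merely nonzero — this is what licenses the use of Theorem~\ref{rearrangethm}. The key input is that $1/q \notin L^2(\mu)$ in general (as emphasized in the discussion before Proposition~\ref{qrelations}), so one cannot simply say $1 \in \gkbox_\mu$; instead the argument must route through $q \in \gkboxllperp_\mu$ and the orthogonality statement of Proposition~\ref{qrelations}, which shows $q$ is orthogonal to everything with the appropriate vanishing Fourier coefficients, in particular to all of $\gkboxll_\mu$. Combined with the dimension count $\dim(\gkbox_\mu/\gkboxll_\mu) \le 1$ (since the quotient is detected by the single coefficient $\hat{\,\cdot\,}(n,m)$), this pins down $\dim\gkboxllperp_\mu = 1$. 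Everything else is bookkeeping: matching the kernels $K\gkboxrperpup_\mu$, $K\gkboxuperplt_\mu$ to the vector polynomials $\mathbf{E}$, $\mathbf{F}$ is deferred to the later sections where the full Theorem~\ref{mainthm} is assembled, and is not needed for this corollary as stated.
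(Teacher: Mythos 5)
Your proposal is correct and follows essentially the same route as the paper's proof: Proposition \ref{qrelations} gives $q\in\gkboxllperp_\mu$, Theorem \ref{BernSzegthm} gives $\gkboxurperpdn_\mu=\gkboxrperpdn_\mu$ (whose reflection kills the other half of $\epsilon$), $\|q\|_{L^2(\mu)}=1$ is immediate from $|q|^2\,d\mu=d\sigma$, and Theorem \ref{rearrangethm} with $\epsilon=0$ finishes. One small slip in your extra verification that $\gkboxllperp_\mu$ is one dimensional: the quotient $\gkbox_\mu/\gkboxll_\mu$ is detected by the value at the origin, i.e.\ the coefficient $\hat{\,\cdot\,}(0,0)$ (this is how $\gkboxll$ is defined), not by $\hat{\,\cdot\,}(n,m)$, which instead detects $\gkbox_\mu/\gkboxur_\mu$; with that correction your codimension-one argument is fine, and likewise the reflection map is an $L^2(\mu)$-isometry simply because $|\refl{p}|=|p|$ on $\mathbb{T}^2$ for every polynomial $p$, not because of any special property of $q$.
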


\begin{proof} Proposition \ref{qrelations} says $q \in
  \gkboxllperp_\mu$ and Theorem \ref{BernSzegthm} says
  $\gkboxurperpdn_\mu = \gkboxrperpdn_\mu$.  Since $||q||_{L^2(\mu)} =
  1$, the conclusion follows from Theorem \ref{rearrangethm} since
  $\gkboxurperpdn_\mu = \gkboxrperpdn_\mu$ says $\epsilon = 0$.
\end{proof}

\begin{corollary}[``Bernstein-Szeg\H{o} approximation'']
  \label{BernSzegcor} Let $\rho$ be
  a positive Borel measure satisfying $\gkboxurperpdn_\rho =
  \gkboxrperpdn_\rho$.  Suppose $q \in \gkboxllperp_\rho$ has no
  factors in common with $\refl{q}$ and define
\[
d\mu = \frac{1}{|q(z,w)|^2} d\sigma(z,w).
\]
If we normalize $\rho$ so that $||q||_{L^2(\rho)} = 1$, then
$\gkbox_\rho = \gkbox_\mu$ and
\[
K\gkbox_\rho = K\gkbox_\mu,
\]
i.e. the inner products on $\gkbox_\mu$ and $\gkbox_\rho$ from
$L^2(\mu)$ and $L^2(\rho)$ agree.
\end{corollary}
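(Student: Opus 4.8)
The plan is to recognize the statement as the Spectral Matching theorem (Theorem~\ref{spectralmatching}) specialized to a Bernstein-Szeg\H{o} measure, the only real work being to check that all of its hypotheses hold and then to evaluate the two normalizing constants.

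First I would collect the elementary facts about $q$. Since $d\mu=\frac{1}{|q|^2}\,d\sigma$ is being defined, $q\not\equiv 0$, and since $q\in\gkboxllperp_\rho\subset\gkbox_\rho\subset\gkbox$ it has degree at most $(n,m)$. By the general remarks following Notation~\ref{complementnotation} the subspace $\gkboxllperp_\rho$ is at most one dimensional; as it contains the nonzero polynomial $q$ it equals $\mathbb{C}q$, and likewise $\gkboxurperp_\rho=\mathbb{C}\refl{q}$ (reflection at the $(n,m)$ level preserves moduli on $\mathbb{T}^2$, hence is isometric on $L^2(\rho)$, and it interchanges the conditions $\hat{p}(0,0)=0$ and $\hat{p}(n,m)=0$, so it carries $\gkboxllperp_\rho$ onto $\gkboxurperp_\rho$). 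Since $q$ and $\refl{q}$ have no common factor, the Proposition immediately preceding Lemma~\ref{dimensionlemma} applies to $\rho$ and shows that $q$ has no zeros on $\cbidisk\setminus\mathbb{T}^2$; in particular $q$ has no zeros on $\mathbb{D}^2$, so $\mu$ is a genuine Bernstein-Szeg\H{o} measure and the results of this section apply to it.

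Next I would feed in the two key facts about $\mu$: Proposition~\ref{qrelations} gives $q\in\gkboxllperp_\mu$, and Theorem~\ref{BernSzegthm} gives $\gkboxurperpdn_\mu=\gkboxrperpdn_\mu$. Together with the hypothesis $\gkboxurperpdn_\rho=\gkboxrperpdn_\rho$, both measures now satisfy the orthogonality condition of Theorem~\ref{spectralmatching}. Since $\gkboxllperp_\mu$ is also at most one dimensional and contains $q\ne 0$, it equals $\mathbb{C}q=\gkboxllperp_\rho\ne\{0\}$, and $q,\refl{q}$ are coprime by hypothesis. Thus Theorem~\ref{spectralmatching} applies and gives $\gkbox_\mu=\gkbox_\rho$ together with
\[
\frac{1}{||q||^2_{L^2(\mu)}}K\gkbox_\mu=\frac{1}{||q||^2_{L^2(\rho)}}K\gkbox_\rho .
\]

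Finally I would evaluate the constants. The measure $\rho$ has been normalized so that $||q||_{L^2(\rho)}=1$, while
\[
||q||^2_{L^2(\mu)}=\int_{\mathbb{T}^2}\frac{|q|^2}{|q|^2}\,d\sigma=\int_{\mathbb{T}^2}d\sigma=1,
\]
the integrand equaling $1$ off the $\sigma$-null zero set of $q$ on $\mathbb{T}^2$. Hence $K\gkbox_\mu=K\gkbox_\rho$, which says precisely that the inner products inherited from $L^2(\mu)$ and $L^2(\rho)$ coincide on $\gkbox_\mu=\gkbox_\rho$, as claimed. The only step needing any care is the first paragraph --- confirming that $\mu$ is honestly a Bernstein-Szeg\H{o} measure and that $\gkboxllperp_\mu=\gkboxllperp_\rho$ --- after which the corollary is immediate from Theorem~\ref{spectralmatching} with both normalizing constants equal to $1$.
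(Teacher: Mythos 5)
Your proposal is correct and follows essentially the same route as the paper: invoke Proposition \ref{qrelations} and Theorem \ref{BernSzegthm} to put $\mu$ in the setting of Theorem \ref{spectralmatching}, and conclude via spectral matching with both normalizing constants equal to $1$. The extra checks you supply (that $q$ has no zeros on $\mathbb{D}^2$ so $\mu$ is genuinely Bernstein--Szeg\H{o}, that $\gkboxllperp_\mu=\mathbb{C}q=\gkboxllperp_\rho$, and that $\|q\|_{L^2(\mu)}=1$) are exactly the details the paper leaves implicit, and they are carried out correctly.
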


\begin{proof} By Proposition \ref{qrelations} $q \in
  \gkboxllperp_\mu$ and by Theorem \ref{BernSzegthm}, 
  $\gkboxurperpdn_\mu = \gkboxrperpdn_\mu$.  We have assumed $q$ has
  no factors in common with $\refl{q}$ and this allows us to apply
  Theorem \ref{spectralmatching}, from which the conclusion follows
  immediately.
\end{proof}

One final lemma will make the proof of the main theorem a matter of
bookkeeping.  We use the following notations:
\begin{equation} \label{def:Zq}
Z_q = \{(z,w)\in \mathbb{C}^2: q(z,w)=0\},
\end{equation}
\begin{equation} \label{def:pis}
\pi_1(z,w) = z \text{ and } \pi_2(z,w) = w.
\end{equation}

\begin{lemma} \label{BernSzegdivisors} 
If $\mu$ is the Bernstein-Szeg\H{o} measure associated
  to $q\in \mathbb{C}[z,w]$:
\[
d\mu = \frac{1}{|q(z,w)|^2} d\sigma(z,w)
\]
then $J(z,w)= (w-w_0)$ and $L(z,w) = (z-z_0)$ will be divisors of the
ideal $\mathcal{I}_\mu$ whenever $w_0 \notin
\pi_2(Z_q\cap\mathbb{T}^2)$ and $z_0 \notin
\pi_1(Z_q\cap\mathbb{T}^2)$ respectively.
\end{lemma}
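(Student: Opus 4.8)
The plan is to verify the definition of ``divisor of the ideal'' directly. So I take $f\in\mathbb{C}[z,w]$ with $(w-w_0)f\in\mathcal{I}_\mu=L^2(\mu)\cap\mathbb{C}[z,w]$ and must show $f\in L^2(\mu)$, that is,
\[
\int_{\mathbb{T}^2}\frac{|f(z,w)|^2}{|q(z,w)|^2}\,d\sigma(z,w)<\infty
\quad\text{given}\quad
\int_{\mathbb{T}^2}\frac{|w-w_0|^2\,|f(z,w)|^2}{|q(z,w)|^2}\,d\sigma(z,w)<\infty .
\]
By the $z\leftrightarrow w$ symmetry of the assertion (replacing $q(z,w)$ by $q(w,z)$ carries the Bernstein-Szeg\H{o} measure to the one with the variables swapped and interchanges $\pi_1$ and $\pi_2$ on $Z_q\cap\mathbb{T}^2$) it is enough to treat the factor $J(z,w)=w-w_0$ under the hypothesis $w_0\notin\pi_2(Z_q\cap\mathbb{T}^2)$.

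First I would dispose of the easy case $w_0\notin\mathbb{T}$. Then $|w-w_0|\ge \bigl|\,|w_0|-1\,\bigr|=:d>0$ for all $w\in\mathbb{T}$, hence pointwise on $\mathbb{T}^2$ one has $\dfrac{|f|^2}{|q|^2}\le d^{-2}\,\dfrac{|w-w_0|^2|f|^2}{|q|^2}$, and $f\in L^2(\mu)$ follows at once from $(w-w_0)f\in L^2(\mu)$.

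The substance is the case $w_0\in\mathbb{T}$, where $|w-w_0|$ vanishes on the slice $\{w=w_0\}\subset\mathbb{T}^2$. Here the hypothesis $w_0\notin\pi_2(Z_q\cap\mathbb{T}^2)$ says exactly that $q(z,w_0)\ne 0$ for every $z\in\mathbb{T}$ (so in particular $q(\cdot,w_0)\not\equiv0$). By compactness of $\mathbb{T}$, $m_0:=\min_{z\in\mathbb{T}}|q(z,w_0)|>0$, and by uniform continuity of $q$ on $\mathbb{T}^2$ there is $\delta>0$ such that $|q(z,w)|\ge m_0/2$ whenever $(z,w)\in\mathbb{T}^2$ and $|w-w_0|<\delta$. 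I then split $\mathbb{T}^2$ into $A=\{(z,w)\in\mathbb{T}^2:|w-w_0|<\delta\}$ and its complement. On $A$ we have $\dfrac{|f|^2}{|q|^2}\le \dfrac{4}{m_0^2}\,|f|^2$, which is bounded because $f$ is a polynomial, hence integrable over the finite-measure set $A$. On $\mathbb{T}^2\setminus A$ we have $|w-w_0|\ge\delta$, so $\dfrac{|f|^2}{|q|^2}\le \delta^{-2}\dfrac{|w-w_0|^2|f|^2}{|q|^2}$, which is integrable since $(w-w_0)f\in L^2(\mu)$. Adding the two contributions gives $f\in L^2(\mu)$, hence $f\in\mathcal{I}_\mu$; thus $w-w_0$ is a divisor of $\mathcal{I}_\mu$, and the statement for $z-z_0$ follows by the symmetry noted above.

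I do not anticipate a genuine obstacle. The one point requiring care is that the zero set of $w-w_0$ on $\mathbb{T}^2$ (when $w_0\in\mathbb{T}$) sits exactly where, by hypothesis, $q$ is bounded away from $0$, so dividing by $|q|^2$ there is harmless; off that slice $|w-w_0|$ is bounded below and dividing by it is harmless. The whole argument is thus a routine compactness-and-splitting estimate rather than anything subtle.
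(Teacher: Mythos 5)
Your proof is correct and is essentially the paper's own argument: both split $\mathbb{T}^2$ into a neighborhood of the slice $\{w=w_0\}$ (where the hypothesis $w_0\notin\pi_2(Z_q\cap\mathbb{T}^2)$ and compactness give a lower bound on $|q|$, so the polynomial $f$ is trivially square-integrable there) and its complement (where $|w-w_0|$ is bounded below, so the hypothesis $(w-w_0)f\in L^2(\mu)$ controls the integral). Your explicit treatment of the easy case $w_0\notin\mathbb{T}$ and the symmetry remark are fine but do not change the substance.
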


\begin{proof} If $(z-z_0) f(z,w) \in L^2(\mu)$ for some $f \in
  \mathbb{C}[z,w]$ and $z_0 \notin \pi_1(Z_q\cap\mathbb{T}^2)$, then
let $U$ be a neighborhood of $Z_{z-z_0}\cap \mathbb{T}^2$ which does
not intersect $Z_q$.  Then, $|z-z_0|^2$ is bounded below on
$\mathbb{T}^2 \setminus U$ and $|q|^2$ is bounded below on $U$, say by
a constant $c$.  Then,
\[
\infty > \int_{\mathbb{T}^2} \frac{|z-z_0|^2|f(z,w)|^2}{|q(z,w)|^2}
d\sigma \geq \int_{\mathbb{T}^2\setminus U} \frac{c
|f(z,w)|^2}{|q(z,w)|^2} d\sigma
\]
and 
\[
\infty > \int_{U} |f(z,w)|^2 d\sigma \geq \int_{U} \frac{c
  |f(z,w)|^2}{|q(z,w)|^2} d\sigma
\]
together imply
\[
||f||^2_{L^2(\mu)} = \int_{U} \frac{ |f(z,w)|^2}{|q(z,w)|^2} d\sigma +
\int_{\mathbb{T}^2\setminus U} \frac{ |f(z,w)|^2}{|q(z,w)|^2} d\sigma
< \infty.
\]
This proves $L$ is a divisor of $\mathcal{I}_\mu$.  The proof for $J$
is similar.
\end{proof}

\section{Proof of the main theorem} \label{sec:mainthm}

We have all of the pieces in place to prove the theorem from the
introduction.  Here is the main theorem with extra details filled in.
When we use the inner product notation $\ip{\cdot}{\cdot}$ below with
no subscript, we are taking inner products in $\mathbb{C}^N$ (where
the $N$ is taken from context) and not taking any kind of Hilbert
function space inner product.

\begin{theorem} \label{mainthmfull} Let $q\in \mathbb{C}[z,w]$ have degree
  at most $(n,m)$ with no zeros on $\mathbb{D}^2$ and finitely many
  zeros on $\mathbb{T}^2$.  Then, there exist vector polynomials
  $\mathbf{E} \in \mathbb{C}^n[z,w]$ and $\mathbf{F} \in
  \mathbb{C}^m[z,w]$ of degree at most $(n-1,m)$ and $(n,m-1)$
  respectively (in each component) with the property that if we write
  them in matrix form as
\[
\mathbf{E}(z,w) = E(w) \mathbf{\Lambda}_n(z) 
\qquad
\mathbf{F}(z,w) = F(z) \mathbf{\Lambda}_m(w) 
\]
where $E(w)$ is an $n\times n$ matrix polynomial of degree at most $m$
and $F(z)$ is an $m\times m$ matrix polynomial of degree at most $n$,
then
\begin{enumerate}
\item $E(w)$ is invertible for all $w \in \overline{\mathbb{D}}$ with
  the possible exception of $w \in \mathbb{T}\cap\pi_2 (Z_q)$,
\item $z^n \overline{F(1/\bar{z})}$ is invertible for all $z \in
  \overline{\mathbb{D}}$ with the possible exception of $z \in
  \mathbb{T}\cap \pi_1 (Z_q)$,
\item the following formula holds
\begin{align} \label{mainsosfull}
  q(z,w) &\overline{q(Z,W)} - \refl{q}(z,w) \overline{\refl{q}(Z,W)}
  \\ \nonumber
&= (1-z\bar{Z}) \ip{\mathbf{E}(z,w)}{\mathbf{E}(Z,W)} + (1-w\bar{W})
\ip{\mathbf{F}(z,w)}{\mathbf{F}(Z,W)}, 
\end{align}

\item if $\tilde{\mathbf{E}} \in \mathbb{C}^n[z,w]$ and
  $\tilde{\mathbf{F}} \in \mathbb{C}^m[z,w]$ satisfy items (1) and (3)
  above in place of $\mathbf{E}$ and $\mathbf{F}$, then there exist
  unitary matrices $U_1$, $U_2$ such that
\[
\mathbf{E}(z,w) = U_1 \tilde{\mathbf{E}}(z,w) \qquad \mathbf{F}(z,w) =
U_2 \tilde{\mathbf{F}}(z,w),
\]
and
\item there exists $N \leq nm$ and $\mathbf{G} \in
  \mathbb{C}^{N}[z,w]$ such that
\[
\begin{aligned}
&\ip{\mathbf{G}(z,w)}{\mathbf{G}(Z,W)}  \\
&= \frac{\ip{\mathbf{E}(z,w)}{\mathbf{E}(Z,W)} -
  \ip{\mathbf{\refl{E}}(z,w)}{\mathbf{\refl{E}}(Z,W)} }{1-w\bar{W}}\\
& =
\frac{\ip{\mathbf{\refl{F}}(z,w)}{\mathbf{\refl{F}}(Z,W)} -
  \ip{\mathbf{F}(z,w)}{\mathbf{F}(Z,W)} }{1-z\bar{Z}}
\end{aligned}
\]
where 
\[
\mathbf{\refl{E}}(z,w) := z^{n-1} w^m
\overline{\mathbf{E}(1/\bar{z}, 1/\bar{w})} \text{ and }
\mathbf{\refl{F}}(z,w) := z^{n} w^{m-1}
\overline{\mathbf{F}(1/\bar{z}, 1/\bar{w})}.
\]

\end{enumerate}

\end{theorem}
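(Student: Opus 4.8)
The plan is to feed $q$ into the machinery of Sections~\ref{genprop}--\ref{sec:BernSzeg} through its Bernstein-Szeg\H{o} measure and then read items~(1)--(5) off the results already established. Set $d\mu = d\sigma/|q(z,w)|^2$. Since $(0,0)\in\mathbb{D}^2$ we have $q(0,0)\neq 0$, and since $|\refl q|=|q|$ on $\mathbb{T}^2$ both $q$ and $\refl q$ lie in $\gkbox_\mu$ with $\|q\|_{L^2(\mu)}^2=\|\refl q\|_{L^2(\mu)}^2=\int d\sigma=1$. Proposition~\ref{qrelations} gives $q\in\gkboxllperp_\mu$, and Corollary~\ref{perpcorollary} with $g\equiv 1$ gives $\refl q\in\gkboxurperp_\mu$; since $q(0,0)\neq 0$ and $\hat{\refl q}(n,m)=\overline{q(0,0)}\neq 0$, each of these complements is one dimensional, spanned respectively by $q$ and $\refl q$. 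Theorem~\ref{BernSzegthm} gives $\gkboxurperpdn_\mu=\gkboxrperpdn_\mu$. Finally $q$ has no zeros on $\mathbb{D}^2$ by hypothesis, no zeros on $(\mathbb{T}\times\mathbb{D})\cup(\mathbb{D}\times\mathbb{T})$ by Lemma~\ref{zeroslemma} (a zero there would produce a factor $z-z_0$ or $w-w_0$ with $z_0,w_0\in\mathbb{T}$, contradicting finiteness of $Z_q\cap\mathbb{T}^2$), and only finitely many zeros on $\mathbb{T}^2$; hence $q$ has only finitely many zeros in $\cbidisk$, so by Corollary~\ref{factorcorollary} the factor $q_1$ there is a constant, which (as explained after that corollary) means $q$ and $\refl q$ have no common factor. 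Lemma~\ref{dimensionlemma} then delivers the crucial count $\dim\gkboxrperpup_\mu=n$ and $\dim\gkboxuperplt_\mu=m$.

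With that in hand, choose orthonormal bases of $\gkboxrperpup_\mu$ and $\gkboxuperplt_\mu$ and let $\mathbf{E}\in\mathbb{C}^n[z,w]$, $\mathbf{F}\in\mathbb{C}^m[z,w]$ be the vectors of those bases, so that $\ip{\mathbf{E}(z,w)}{\mathbf{E}(Z,W)}=K\gkboxrperpup_\mu$ and $\ip{\mathbf{F}(z,w)}{\mathbf{F}(Z,W)}=K\gkboxuperplt_\mu$ (with the argument $((z,w),(Z,W))$ suppressed). The inclusions $\gkboxrperpup_\mu\subseteq\gkboxr_\mu$ and $\gkboxuperplt_\mu\subseteq\gkboxu_\mu$ give the stated degrees of $\mathbf{E}$ and $\mathbf{F}$, hence the shapes $E(w)\in\mathbb{C}^{n\times n}$ of degree $\le m$ and $F(z)\in\mathbb{C}^{m\times m}$ of degree $\le n$. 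Item~(3) is then exactly Corollary~\ref{bernszegsos} after substituting these kernels, and putting $(Z,W)=(z,w)$ recovers the diagonal form of Theorem~\ref{mainthm}. For item~(1) I apply Proposition~\ref{fullrankprop} to $\gkboxrperpup_\mu$: $E(w_0)$ has full rank $n$ for $w_0\in\mathbb{D}$ and for $w_0\in\mathbb{T}$ with $w-w_0$ a divisor of $\mathcal{I}_\mu$, and Lemma~\ref{BernSzegdivisors} guarantees the latter whenever $w_0\notin\pi_2(Z_q\cap\mathbb{T}^2)$; since $\pi_2(Z_q\cap\mathbb{T}^2)\subseteq\mathbb{T}\cap\pi_2(Z_q)$ this gives invertibility of $E$ on $\overline{\mathbb{D}}\setminus(\mathbb{T}\cap\pi_2(Z_q))$. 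Item~(4) is immediate from the Uniqueness Lemma~\ref{uniquenesslemma}, since writing a competing pair in the matrix form required by item~(1) already forces the degree hypotheses of that lemma while item~(3) supplies the polarized, hence diagonal, identity it uses.

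Items~(2) and~(5) need the effect of reflection on these subspaces; this is where most of the care goes. Reflecting $\gkboxrperpup_\mu=\gkboxr_\mu\ominus w\gkboxsm_\mu$ at the $(n-1,m)$ level carries $w\gkboxsm_\mu$ onto $\gkboxsm_\mu$, so $\mathbf{\refl{E}}$ (componentwise, as defined in item~(5)) is an orthonormal basis vector of $\gkboxrperpdn_\mu$ and $\ip{\mathbf{\refl{E}}(z,w)}{\mathbf{\refl{E}}(Z,W)}=K\gkboxrperpdn_\mu$; likewise reflecting $\gkboxuperplt_\mu=\gkboxu_\mu\ominus\gkboxsm_\mu$ at the $(n,m-1)$ level carries $\gkboxsm_\mu$ onto $z\gkboxsm_\mu$, so $\ip{\mathbf{\refl{F}}(z,w)}{\mathbf{\refl{F}}(Z,W)}=K\gkboxuperprt_\mu$. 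Comparing $\gkboxr_\mu=\gkboxrperpup_\mu\oplus w\gkboxsm_\mu=\gkboxrperpdn_\mu\oplus\gkboxsm_\mu$ gives $K\gkboxrperpup_\mu-K\gkboxrperpdn_\mu=(1-w\bar{W})K\gkboxsm_\mu$, and comparing $\gkboxu_\mu=\gkboxuperplt_\mu\oplus\gkboxsm_\mu=\gkboxuperprt_\mu\oplus z\gkboxsm_\mu$ gives $K\gkboxuperprt_\mu-K\gkboxuperplt_\mu=(1-z\bar{Z})K\gkboxsm_\mu$ (compare the identities in the proof of Proposition~\ref{zerosprop}). Hence both quotients in item~(5) equal $K\gkboxsm_\mu$, so I take $\mathbf{G}$ to be the vector of an orthonormal basis of $\gkboxsm_\mu$, with $N=\dim\gkboxsm_\mu\le\dim\gkboxsm=nm$. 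For item~(2), write $\mathbf{F}(z,w)=F(z)\bLam_m(w)$; a short computation gives $\mathbf{\refl{F}}(z,w)=z^n\overline{F(1/\bar z)}\,J_m\bLam_m(w)$ with $J_m$ the $m\times m$ exchange matrix, and since both $\mathbf{\refl{F}}$ and an orthonormal basis vector of the $m$-dimensional space $\gkboxuperprt_\mu$ written as $G(z)\bLam_m(w)$ span that space, $z^n\overline{F(1/\bar z)}\,J_m$ equals $G(z)$ times a constant unitary; thus $z^n\overline{F(1/\bar z)}$ is invertible precisely where $G(z_0)$ has full rank, which by Proposition~\ref{fullrankprop} (with the roles of $z$ and $w$ exchanged) together with Lemma~\ref{BernSzegdivisors} is all of $\overline{\mathbb{D}}\setminus(\mathbb{T}\cap\pi_1(Z_q))$.

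The main obstacle is not any single step but the assembly: pinning down that $\gkboxllperp_\mu$ and $\gkboxurperp_\mu$ are one dimensional and coprime (so Lemma~\ref{dimensionlemma} applies), and then tracking reflections carefully enough that the exceptional sets in items~(1) and~(2) come out exactly as $\mathbb{T}\cap\pi_2(Z_q)$ and $\mathbb{T}\cap\pi_1(Z_q)$. Beyond that, the proof is a substitution into Corollary~\ref{bernszegsos}, Proposition~\ref{fullrankprop}, Lemma~\ref{BernSzegdivisors}, and the Uniqueness Lemma.
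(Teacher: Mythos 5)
Your proposal is correct and follows essentially the same route as the paper: pass to the Bernstein-Szeg\H{o} measure, invoke Proposition \ref{qrelations}, Theorem \ref{BernSzegthm}, Lemma \ref{dimensionlemma}, and Corollary \ref{bernszegsos}, take orthonormal bases of $\gkboxrperpup_\mu$ and $\gkboxuperplt_\mu$ for $\mathbf{E}$ and $\mathbf{F}$, get items (1)--(2) from Proposition \ref{fullrankprop} plus Lemma \ref{BernSzegdivisors} (using the reflection $\refl{\mathbf{F}}$), item (4) from Lemma \ref{uniquenesslemma}, and item (5) from the kernel identities involving $K\gkboxsm_\mu$. In fact you supply some details the paper leaves implicit, such as the one-dimensionality of $\gkboxllperp_\mu$ and $\gkboxurperp_\mu$ and the coprimality of $q$ and $\refl{q}$ needed for Lemma \ref{dimensionlemma}.
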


\begin{proof}
  We use the setup (and conclusion) of Corollary \ref{bernszegsos}.
  By Lemma \ref{dimensionlemma}, $\gkboxrperpup_\mu$ has dimension $n$
  and $\gkboxuperplt_\mu$ has dimension $m$.  Let $\{e_1, \dots,
  e_n\}$ be an orthonormal basis of $\gkboxrperpup_\mu$ and $\{f_1,
  \dots, f_m\}$ an orthonormal basis of $\gkboxuperplt_\mu$.  Define
  $\mathbf{E} = (e_1, \dots, e_n)^t \in \mathbb{C}^n[z,w]$ and
  $\mathbf{F} = (f_1, \dots, f_m)^t \in \mathbb{C}^m[z,w]$. Corollary
  \ref{bernszegsos} now proves item (3).

 Write $\mathbf{E}(z,w) = E(w) \bLam_n(z)$ and $\mathbf{F}(z,w) = F(z)
  \bLam_m(w)$.  With these choices, Proposition \ref{fullrankprop}
  says $E(w)$ is invertible for all $w \in \overline{\mathbb{D}}$ with
  the exception of $w_0\in \mathbb{T}$ with the property that $w-w_0$
  is not a divisor of $\mathcal{I}_\mu$.  Lemma \ref{BernSzegdivisors}
  says $(w-w_0)$ is a divisor of $\mathcal{I}_\mu$ when $w_0 \notin
  \pi_2(Z_q\cap \mathbb{T}^2)$. So, $E(w)$ is invertible when $w \in
  \overline{\mathbb{D}}\setminus \pi_2(Z_q \cap \mathbb{T}^2)$.  The
  entries of
\[
\refl{\mathbf{F}}(z,w) = z^nw^{m-1}\overline{\mathbf{F}(1/\bar{z},
    1/\bar{w})} 
\]
form an orthonormal basis for $\gkboxuperprt_\mu$ and 
\[
\refl{\mathbf{F}}(z,w) = z^n\overline{F(1/\bar{z})} w^{m-1}
  \overline{\bLam_m(1/\bar{w})}  = z^n\overline{F(1/\bar{z})} X \bLam_m(w)
\]
where $X$ is the $m\times m$ matrix with ones on the anti-diagonal
(entries $(j,m-j)$) and zeros elsewhere.  By Proposition
\ref{fullrankprop} and Lemma \ref{BernSzegdivisors}
$z^n\overline{F(1/\bar{z})}X$ is invertible for $z \in
\overline{\mathbb{D}}\setminus \pi_1(Z_q \cap \mathbb{T}^2)$. Of
course, $X$ is invertible, so the same statement holds for
$z^n\overline{F(1/\bar{z})}$.  This proves items (1) and (2) of
Theorem \ref{mainthmfull}. 

Lemma
  \ref{uniquenesslemma} proves item (4).  Item (5) follows from the
  fact that
\[
K \gkboxsm_\mu = \frac{K \gkboxuperprt_\mu -
  K\gkboxuperplt_\mu}{1-z\bar{Z}} = \frac{K \gkboxrperpup_\mu - K
  \gkboxrperpdn_\mu}{ 1-w\bar{W}}
\]
and we can factor 
\[
K\gkboxsm_\mu ((z,w),(Z,W)) = \ip{\mathbf{G}(z,w)}{\mathbf{G}(Z,W)}
\]
using an orthonormal basis of $\gkboxsm_\mu$ (a subspace with
dimension at most $nm$).
\end{proof}

\section{Polynomials with unique decompositions}
\label{sec:uniquedecomp}

In this section we give a characterization of the polynomials with no
zeros on the bidisk that have a unique sums of squares
decomposition. 

\begin{proof}[Proof of Theorem \ref{uniquedecompthm}]
Suppose $q$ is a polynomial of degree $(n,m)$ with no zeros on
$\mathbb{D}^2$ and finitely many zeros on $\mathbb{T}^2$.  

To prove item (1) implies (2) in the theorem, suppose there are unique
$\Gamma_1$ and $\Gamma_2$, sums of squared moduli of two variable
polynomials, such that
\[
|q(z,w)|^2 - |\refl{q}(z,w)|^2 = (1-|z|^2)\Gamma_1(z,w) +
 (1-|w|^2)\Gamma_2(z,w).
\]
By Corollary \ref{bernszegsos}, if $\mu$ is the
Bernstein-Szeg\H{o} measure associated to $q$ then
\[
\begin{aligned}
  |q(z,w)|^2& - |\refl{q}(z,w)|^2
  \\
  =& (1-|z|^2) K\gkboxrperpup_\mu ((z,w),(z,w))
  + (1-|w|^2) K\gkboxuperplt_\mu ((z,w),(z,w)) \\
  =& (1-|z|^2) K\gkboxrperpdn_\mu ((z,w),(z,w))
  + (1-|w|^2) K\gkboxuperprt_\mu ((z,w),(z,w)).
\end{aligned}
\]
These reproducing kernels can be written as sums of squares of two
variable polynomials.  Since we are assuming such decompositions are
unique we have
\[
K\gkboxrperpup_\mu ((z,w),(z,w)) = K\gkboxrperpdn_\mu ((z,w),(z,w)).
\]
Because of the formula
\begin{equation} \label{equdthm}
K\gkboxrperpup_\mu ((z,w),(z,w)) - K\gkboxrperpdn_\mu ((z,w),(z,w))
=(1-|w|^2)K\gkboxsm_\mu((z,w),(z,w))
\end{equation}
we see that
\[
K\gkboxsm_\mu((z,w),(z,w)).
\]
This implies $\gkboxsm_\mu = \{0\}$.  In other words, there are no
nonzero $f\in \gkboxsm\cap L^2(\mu) = \gkboxsm \cap L^2(1/|q|^2
d\sigma)$ and this just says there are no nonzero $f \in \gkboxsm$
such that
\[
f/q \in L^2(\mathbb{T}^2).
\]
This proves that item (1) implies item (2) in Theorem
\ref{uniquedecompthm}.  

To prove item (2) implies (3) in the theorem, assume there are no
nonzero $f \in \gkboxsm$ such that
\[
f/q \in L^2(\mathbb{T}^2).
\]
This just says $\gkboxsm_\mu = \{0\}$ and again by \eqref{equdthm} we
have
\[
K\gkboxrperpup_\mu ((z,w),(z,w)) = K\gkboxrperpdn_\mu ((z,w),(z,w)).
\]
The two subspaces $\gkboxrperpup_\mu$ and $\gkboxrperpdn_\mu$ are
reflections of one another.  So, if we write 
\[
K\gkboxrperpup_\mu((z,w),(z,w)) = K\gkboxrperpdn_\mu((z,w),(z,w)) =
|\mathbf{E}(z,w)|^2
\]
where $\mathbf{E}(z,w) = (E_1(z,w),\dots, E_n(z,w))^t \in
\mathbb{C}^n[z,w]$ and $E_1,\dots, E_n$ are an orthonormal basis for
$\gkboxrperpup_\mu=\gkboxrperpdn_\mu$, then the entries of
\[
\refl{\mathbf{E}}(z,w) := z^{n-1}w^m
\overline{\mathbf{E}(1/\bar{z}, 1/\bar{w})}
\]
also form an orthonormal basis for
$\gkboxrperpup_\mu=\gkboxrperpdn_\mu$.  This implies
\[
|\mathbf{E}(z,w)|^2 = |\mathbf{\refl{E}}(z,w)|^2
\]
and by Lemma \ref{simpleunique} there is an $n\times n$ unitary matrix
$U$ such that
\[
U\mathbf{E}(z,w) = \refl{\mathbf{E}}(z,w).
\]
(As we commented there Lemma \ref{simpleunique} holds for two variable
polynomials just as well.)  If we reflect both sides of this equation
(take conjugates, replace $(z,w)$ with $(1/\bar{z},1/\bar{w})$, and
multiply through by $z^{n-1}w^m$) we see that
\[
\bar{U} \refl{\mathbf{E}}(z,w) = \mathbf{E}(z,w).
\]
Note that $\bar{U}$ is the matrix obtained by taking complex
conjugates of each entry of $U$ and is not the adjoint of $U$.  In
fact, $\bar{U}^{-1} = U^t$ and therefore
\[
U^t \mathbf{E}(z,w) = \refl{\mathbf{E}}(z,w) = U \mathbf{E}(z,w).
\]
This implies $U=U^t$ since the vectors $\mathbf{E}(z,w)$ span all of
$\mathbb{C}^n$ as $(z,w)$ varies over $\mathbb{C}^2$ (by Lemma
\ref{fullrankprop}).  This says $U$ is a symmetric unitary.  Symmetric
unitaries can be factored as $U=V^t V$ where $V$ is a unitary---this
is the so-called Takagi factorization.  The
vector polynomial
\[
V\mathbf{E}(z,w)
\]
is then symmetric since its reflection is
\[
\bar{V}\refl{\mathbf{E}}(z,w) = (V^t)^{-1} U \mathbf{E}(z,w) = V
\mathbf{E}(z,w)
\]
as $U = V^t V$.  So we replace $\mathbf{E}$ with $V\mathbf{E}$ and
this proves there exists a symmetric vector polynomial $\mathbf{E}$
such that
\[
K\gkboxrperpup_\mu((z,w),(z,w)) = K\gkboxrperpdn_\mu((z,w),(z,w))
=|\mathbf{E}(z,w)|^2.
\]
By Proposition \ref{fullrankprop}, if we write $\mathbf{E}(z,w) = E(w)
\bLam_n(z)$, then $E(w)$ is invertible on the disk $\mathbb{D}$ and
on $\mathbb{C}\setminus \overline{\mathbb{D}}$; i.e. 
\[
\det E(w)
\]
has all of its roots on the unit circle $\mathbb{T}$.  

Similar arguments show that when $\gkboxsm_\mu = \{0\}$, there exists
a symmetric vector polynomial $\mathbf{F} \in \mathbb{C}^m[z,w]$ of
degree $(n,m-1)$ with the property that when we write $\mathbf{F}$ as
$F(z)\bLam_m (w)$,
\[
\det F(z)
\]
has all of its roots on the unit circle $\mathbb{T}$ and
\[
K\gkboxuperprt_\mu((z,w),(z,w)) = K\gkboxuperplt_\mu((z,w),(z,w)) =
|\mathbf{F}(z,w)|^2.
\]
By Corollary \ref{bernszegsos}, we have that
\begin{equation} \label{3implies1eq}
|q(z,w)|^2 - |\refl{q}(z,w)|^2 = (1-|z|^2)|\mathbf{E}(z,w)|^2 +
 (1-|w|^2)|\mathbf{F}(z,w)|^2
\end{equation}
where $\mathbf{E}$ and $\mathbf{F}$ satisfy all of the desired
properties.  This proves item (2) implies item (3).

To prove item (3) implies (1) assume \eqref{3implies1eq} holds where
$\mathbf{E}(z,w) = E(w) \bLam_n(z)$, $\mathbf{F}(z,w) = F(z)
\bLam_m(w)$, and both $E(w)$ and $F(z)$ are invertible in the disk.
We must show this is the only sums of squares decomposition for $q$.

Suppose there are vector polynomials $\mathbf{A} \in
\mathbb{C}^N[z,w], \mathbf{B} \in \mathbb{C}^M[z,w]$ such that
\[
|q(z,w)|^2 - |\refl{q}(z,w)|^2 = (1-|z|^2)|\mathbf{A}(z,w)|^2 +
 (1-|w|^2)|\mathbf{B}(z,w)|^2.
\]
Setting $|w|=1$, equation \eqref{3implies1eq} implies
\[
|\mathbf{E}(z,w)|^2 = |\mathbf{A}(z,w)|^2
\]
for $(z,w) \in \mathbb{C}\times \mathbb{T}$.  Since $E(w)$ is
invertible in $\mathbb{D}$, Lemma \ref{hardunique} applies: $n\leq N$
and there exists a one variable $N\times n$ matrix valued valued
rational inner function $\Psi_1$ such that
\[
\mathbf{A}(z,w) = \Psi_1(w) \mathbf{E}(z,w) \text{ for } (z,w) \in
\mathbb{D}^2.
\]
By similar reasoning, $m \leq M$ and there exists an $M\times m$
matrix valued rational inner function $\Psi_2$ such that
\[
\mathbf{B}(z,w) = \Psi_2(z) \mathbf{F}(z,w).
\]
So,
\[
\begin{aligned}
|\mathbf{A}(z,w)|^2 &\leq |\mathbf{E}(z,w)|^2 \\
|\mathbf{B}(z,w)|^2 &\leq |\mathbf{F}(z,w)|^2
\end{aligned}
\]
for all $(z,w) \in \mathbb{D}^2$.  However, we must have equality at
every point in both of these inequalities because otherwise
\[
\begin{aligned}
& (1-|z|^2)|\mathbf{E}(z,w)|^2 + (1-|w|^2)|\mathbf{F}(z,w)|^2 \\
=& (1-|z|^2)|\mathbf{A}(z,w)|^2 + (1-|w|^2)|\mathbf{B}(z,w)|^2
\end{aligned}
\]
would be violated.  Hence, the sums of squares terms for $q$ are
unique:
\[
\begin{aligned}
|\mathbf{A}(z,w)|^2 &= |\mathbf{E}(z,w)|^2 \\
|\mathbf{B}(z,w)|^2 &= |\mathbf{F}(z,w)|^2
\end{aligned}
\]
for all $(z,w) \in \mathbb{C}^2$.  This proves (3) implies (1) and
concludes the proof.
\end{proof}

Corollary \ref{uniquedecompcor} says that the among polynomials with no
zeros on the closed bidisk, the only ones with a unique decomposition
are one variable polynomials.   We prove this now.

\begin{proof}[Proof of Corollary \ref{uniquedecompcor}]  
Suppose $p$ is a polynomial of degree $(n,m)$ with no zeros on the
closed bidisk.  It is implicit in most of this paper that $n,m>0$.  By
Theorem \ref{uniquedecompthm}, since $1/|p|^2$ is integrable, it
follows that $p$ does not have a unique sums of squares decomposition.
If $n=0$ or $m=0$ then $p$ is really just a one variable polynomial
with no zeros on closed disk.  It is well known that the decomposition
in the one variable Christoffel-Darboux formula is unique, since the
sums of squares term can just be solved for; it equals
\[
\frac{|p(z)|^2 - |\refl{p}(z)|^2}{1-|z|^2}
\]
in the case where $m=0$. 
\end{proof}

\section{Fejer-Riesz factorization} \label{sec:Fejer}
In this section we reprove Geronimo and Woerdeman's characterization
of the positive two variable trigonometric polynomials $t$ that have a
Fej\'er-Riesz factorization; i.e. which $t$ can be written as
$t=|p|^2$ where $p$ is a polynomial with no zeros on the closed
bidisk.  Our proof does not make use of a certain ``maximal entropy
result'' and is therefore self-contained.  We also use this as an
opportunity to extend this theorem to the certain cases of
\emph{non-negative} trigonometric polynomials.

We emphasize that the condition $\aocond$ below, can be rephrased as a
relation on the moments of $\mu$ in the case where $\mu$ is a finite
measure.

\begin{theorem}[Geronimo-Woerdeman \cite{GW04}]
Let $t:\mathbb{T}^2 \to \mathbb{C}$ be a positive trigonometric
polynomial of two variables with Fourier coefficients $\hat{t}(j,k)$
supported on the set $|j|\leq n, |k|\leq m$.  Then, there exists $p\in
\mathbb{C}[z,w]$ of degree at most $(n,m)$ with no zeros on the closed
bidisk satisfying $t(z,w) = |p(z,w)|^2$ for all $(z,w)\in
\mathbb{T}^2$ if and only if the measure $d\mu = \frac{1}{t}d\sigma$
satisfies
\[
\gkboxurperpdn_\mu = \gkboxrperpdn_\mu.
\]
\end{theorem}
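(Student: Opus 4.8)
The plan is to prove the two implications separately; the forward direction is immediate from what has already been established, and essentially all the work is in the converse.

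For the forward direction, if $t=|p|^2$ with $p\in\mathbb{C}[z,w]$ of degree at most $(n,m)$ and no zeros on $\cbidisk$, then $p$ in particular has no zeros on $\mathbb{D}^2$, and $d\mu=\frac1t\,d\sigma=\frac{1}{|p|^2}\,d\sigma$ is precisely the Bernstein--Szeg\H{o} measure attached to $p$. Theorem~\ref{BernSzegthm} then gives $\gkboxurperpdn_\mu=\gkboxrperpdn_\mu$ directly.

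For the converse, assume $d\mu=\frac1t\,d\sigma$ satisfies $\gkboxurperpdn_\mu=\gkboxrperpdn_\mu$. Since $t$ is strictly positive on $\mathbb{T}^2$, the density $1/t$ is bounded above and below, so $\mu$ is a finite measure whose $L^2$-norm is equivalent to the Lebesgue norm; in particular $\gkbox_\mu$ is the full space of polynomials of degree at most $(n,m)$ and $1\in\gkbox_\mu$. Hence $\gkboxllperp_\mu=\gkbox_\mu\ominus\gkboxll_\mu$ is one dimensional; fix a generator $q$. By inequality~\eqref{zerosineq} together with Proposition~\ref{zerosprop}, any zero of $q$ in $\cbidisk$ would be a common zero of every element of $\gkbox_\mu$, which is impossible because $1\in\gkbox_\mu$; so $q$ has no zeros on the closed bidisk. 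A short argument then shows $q$ and $\refl{q}$ have no common factor: a common irreducible factor $r$ would divide $q$, hence be zero-free on $\cbidisk$, and reflecting the divisibility $r\mid\refl{q}$ shows $\refl{r}$ also divides $q$ and is therefore zero-free on $\cbidisk$ as well; but a nonconstant $r$ and $\refl{r}$ cannot both be zero-free on $\cbidisk$, since for generic $z_0\in\mathbb{T}$ the roots of $r(z_0,\cdot)$ all lie outside $\overline{\mathbb{D}}$, which forces the roots of $\refl{r}(z_0,\cdot)$ into $\mathbb{D}$ (and symmetrically with the roles of $z$ and $w$ exchanged).

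Now let $d\nu=\frac{1}{|q|^2}\,d\sigma$ be the Bernstein--Szeg\H{o} measure attached to $q$ (playing the role of the measure called $\mu$ in Corollary~\ref{BernSzegcor}). After normalizing $\mu$ so that $\|q\|_{L^2(\mu)}=1$ --- which only multiplies $t$ by a positive constant and is harmless for the statement --- the hypotheses of Corollary~\ref{BernSzegcor} are met with $\rho=\mu$, so $\gkbox_\mu=\gkbox_\nu$ and $K\gkbox_\mu=K\gkbox_\nu$. Since a reproducing kernel determines the inner product on a finite-dimensional space of polynomials, $\ip{\cdot}{\cdot}_\mu$ and $\ip{\cdot}{\cdot}_\nu$ agree on all polynomials of degree at most $(n,m)$; testing on the monomials $z^jw^k$ with $0\le j\le n$, $0\le k\le m$ shows that the Fourier coefficients of $1/t$ and of $1/|q|^2$ coincide on the box $|j|\le n$, $|k|\le m$. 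Setting $h:=t-|q|^2$, a trigonometric polynomial supported on that box, this says $\int_{\mathbb{T}^2}z^{a}w^{b}\,\frac{h}{t|q|^2}\,d\sigma=0$ for all $|a|\le n$, $|b|\le m$; pairing against $\bar h$, which is also supported on the box, gives $\int_{\mathbb{T}^2}\frac{|h|^2}{t|q|^2}\,d\sigma=0$. Since $t$ and $|q|^2$ are positive on $\mathbb{T}^2$ (recall $q$ is zero-free on $\cbidisk$), this forces $h\equiv0$, i.e. $t=|q|^2$; undoing the normalization, $t=|p|^2$ for a suitable scalar multiple $p$ of $q$, which still has no zeros on $\cbidisk$. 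The step I expect to be the main obstacle is this last passage: recognizing that the common reproducing kernel $K\gkbox_\mu=K\gkbox_\nu$ encodes exactly the Gram matrix of the monomials $z^jw^k$, hence exactly the Fourier coefficients of $1/t$ on the box $|j|\le n$, $|k|\le m$, and then squeezing out $t=|q|^2$ from the positivity of $\int|h|^2/(t|q|^2)\,d\sigma$; the coprimality of $q$ and $\refl{q}$ needed to invoke Corollary~\ref{BernSzegcor} is a secondary point that still requires the little reflection argument indicated above.
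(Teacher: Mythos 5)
Your proof is correct, and its skeleton coincides with the paper's: the ``only if'' direction is Theorem \ref{BernSzegthm}, and the ``if'' direction takes the unit-norm generator $q$ of $\gkboxllperp_\mu$, uses Proposition \ref{zerosprop} together with $1\in\gkbox_\mu$ to conclude $q$ is zero-free on $\cbidisk$, and then invokes Corollary \ref{BernSzegcor} to match the inner products, i.e.\ the moments of $1/t$ and $1/|q|^2$ on the box $|j|\le n$, $|k|\le m$. Two things differ. First, you explicitly verify the hypothesis of Corollary \ref{BernSzegcor} that $q$ and $\refl{q}$ have no common factor (an irreducible common factor $r$ would force both $r$ and $\refl{r}$ to divide $q$, hence both to be zero-free on $\cbidisk$, which the root-reflection argument rules out); the paper's proof applies the corollary without spelling this out, so this is a genuine, and correct, patch. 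Second --- the real divergence --- your closing step is a direct pairing argument: $h:=t-|q|^2$ is a trigonometric polynomial supported on the box, its moments against $1/(t|q|^2)$ vanish there, so pairing with $\bar{h}$ gives $\int_{\mathbb{T}^2}|h|^2/(t|q|^2)\,d\sigma=0$ and hence $h\equiv0$. The paper instead argues by equality in Cauchy--Schwarz applied to $|p|/\sqrt{t}$ and $\sqrt{t}/|p|$, needing only $\|t\|_{L^1(\rho)}=\|t\|_{L^1(\mu)}=1$; this is precisely where the paper says it deviates from Geronimo--Woerdeman. Your ending is more elementary for strictly positive $t$, but it leans on $1/t$ and $1/|q|^2$ being bounded on $\mathbb{T}^2$ so that all box moments exist and the pairing is legitimate; the Cauchy--Schwarz route is what allows the paper to run the same proof almost verbatim for the non-negative case $t=\sum_j|p_j|^2$ later in Section \ref{sec:Fejer}, where those moments need not be finite. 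For the theorem as stated, both conclusions are valid.
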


\begin{proof} The ``only if'' direction follows from Theorem
  \ref{BernSzegthm}.  To prove the ``if'' direction, observe that if
  $\mu$ satisfies $\gkboxurperpdn_\mu = \gkboxrperpdn_\mu$, then by
  Corollary \ref{BernSzegcor}, if $p$ is a unit norm polynomial in
  $\gkboxllperp_\mu$, then $p$ has no zeros on the closed bidisk (see
  Remark \ref{stableremark}) and defining
\[
d\rho = \frac{1}{|p(z,w)|^2} d\sigma
\]
we have that the inner products on $L^2(\mu)$ and $L^2(\rho)$ agree
when restricted to $\gkbox$.  This implies the moments
\[
\int_{\mathbb{T}^2} z^jw^k d\mu = \int_{\mathbb{T}^2} z^j w^k d\rho
\]
for $|j|\leq n, |k|\leq m$.  Here is where we deviate from the
Geronimo-Woerdeman proof.  Observe that
\begin{align}
  1 & = \int_{\mathbb{T}^2} \frac{|p(z,w)|}{\sqrt{t(z,w)}}
  \frac{\sqrt{t(z,w)}}{|p(z,w)|} d\sigma \nonumber \\
  & \leq \sqrt{\int_{\mathbb{T}^2} \frac{|p(z,w)|^2}{t(z,w)} d\sigma}
  \sqrt{ \int_{\mathbb{T}^2} \frac{t(z,w)}{|p(z,w)|^2} d\sigma}
\label{cauchy-schwarz} \\
  & = ||p||_{L^2(\mu)} \sqrt{||t||_{L^1(\rho)}} \nonumber
\end{align}
by Cauchy-Schwarz.  Now, $||p||_{L^2(\mu)} = 1$ since $p$ was chosen
to have unit norm, and since the moments of $\mu$ and $\rho$ agree,
\[
||t||_{L^1(\rho)} = ||t||_{L^1(\mu)} = \int_{\mathbb{T}^2}
  \frac{t(z,w)}{t(z,w)} d\sigma = 1.
\]
Therefore, we have equality in the above application of Cauchy-Schwarz
(equation \eqref{cauchy-schwarz}).  This implies $|p|/\sqrt{t}$ and
$\sqrt{t}/|p|$ are multiples of one another.  This implies $|p|^2 = c
t$ for some constant $c$ and this constant must be $c=1$ since $p$ has
unit norm in $L^2(\mu)$.  Hence, $t(z,w) = |p(z,w)|^2$ for $(z,w) \in
\mathbb{T}^2$.  
\end{proof}

We would like to extend this result to the case of non-negative
trigonometric polynomials, and we have some results in this direction.
Work on characterizing when a non-negative operator-valued two
variable polynomial has a Fej\'er-Riesz type factorization was done in
Dritschel-Woerdeman \cite{DW05}.  (Although the subtleties of all of the
different candidates for the notion of ``outerness'' in several
variables seem to have prevented getting a necessary and sufficient
condition for a Fej\'er-Riesz factorization in that paper.)

We believe that any Fej\'er-Riesz type factorization for non-negative
two variable trigonometric polynomials should take into account the
notions of \emph{toral} and \emph{atoral} polynomials. These notions
were alluded to in Remark \ref{finiteremark}.

\begin{example}  
Consider the non-negative trigonometric polynomial $t(z,w) = |z-w|^2$.
It cannot be factored as $|p(z,w)|^2$ where $p \in \mathbb{C}[z,w]$
has no zeros on the bidisk, because $p$ would necessarily vanish on
the set $\{(z,w) \in \mathbb{T}^2: z=w\}$ and therefore $z-w$ would
divide $p$.  So, the polynomial $zw t(z,w) = 2zw-z^2-w^2$
associated to $t$ has a toral factor, and since this toral factor has
zeros in the bidisk, there is no hope for such a Fej\'er-Riesz type of
factorization.  So, the question of whether a Fej\'er-Riesz
factorization exists depends on the properties of the toral factors of
$t$.  This is true more generally.
\EOEx
\end{example}

Let $t: \mathbb{T}^2 \to \mathbb{C}$ be a non-negative trigonometric
polynomial of two variables:
\[
t(z,w) = \sum_{j=-N}^{N} \sum_{k=-M}^{M} t_{jk} z^j w^k \geq 0
\]
and let $q(z,w) := z^N w^M t(z,w) \in \mathbb{C}[z,w]$.

\begin{lemma} If $q$ has an irreducible toral factor $p$, then $p^2$
  divides $q$, and $t/|p|^2$ is a non-negative trigonometric
  polynomial.
\end{lemma}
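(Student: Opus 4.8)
The plan is to work entirely on $\mathbb{T}^2$, exploiting that $t\ge 0$ is real-valued there and that $p$ being toral forces $Z_p$ to meet $\mathbb{T}^2$ in a set large enough to detect this positivity. Write $q=p^{s}r$ with $p\nmid r$ and $s\ge 1$; the main point is to show $s$ is even, which in particular gives $p^{2}\mid q$. First I would record the algebraic consequences of $t$ being real: its coefficients satisfy $\hat t(j,k)=\overline{\hat t(-j,-k)}$, and a direct computation gives $q^{*}=z^{-2N}w^{-2M}q$, where $f^{*}$ denotes the Laurent polynomial obtained by conjugating the coefficients of $f$ and inverting $z,w$ (so $f^{*}=\bar f$ on $\mathbb{T}^2$). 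Thus $q$ and $q^{*}$ are associates in the Laurent ring $\mathbb{C}[z^{\pm1},w^{\pm1}]$. Moreover $p$, being irreducible and toral, is not a monomial (monomials have no zeros on $\mathbb{T}^2$), hence is prime in the Laurent ring; applying $*$ to $p\mid q$ yields $p^{*}\mid q^{*}\sim q$, so $p^{*}\mid q$ in the Laurent ring, with $p^{*}$ prime.

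Next comes the parity argument, which I expect to be the hard part. Since $p$ is irreducible and toral, $V:=Z_p\cap\mathbb{T}^2$ is infinite; as the intersection of the complex curve $Z_p$ with the real-algebraic set $\mathbb{T}^2$ it is a real-analytic set of dimension $1$ (it cannot be $2$-dimensional, $\mathbb{T}^2$ being totally real), so it contains a $1$-dimensional real-analytic arc $\gamma$. I would then pick a generic $v_0\in\gamma$: a smooth point of $Z_p$ not lying on $Z_r$ (only finitely many points of $\gamma$ are excluded, since $p\nmid r$ makes $Z_p\cap Z_r$ finite) at which $\gamma$ is a $1$-manifold. Choose a holomorphic coordinate $\zeta$ near $v_0$ with $Z_p=\{\zeta=0\}$; then $q=\zeta^{s}h$ with $h$ holomorphic and $h(v_0)\ne 0$. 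Parametrize $\mathbb{T}^2$ near $v_0$ by real coordinates $(x,y)$ with $\gamma=\{x=0\}$. Because $T_{v_0}Z_p$ is a complex line and $T_{v_0}\mathbb{T}^2$ is totally real, their intersection is a totally real subspace of a complex line, hence at most $1$-dimensional, so it equals $T_{v_0}\gamma$; therefore $\partial_x|_{v_0}\notin T_{v_0}Z_p=\ker d\zeta_{v_0}$, which forces $\zeta|_{\mathbb{T}^2}=x\,\phi(x,y)$ with $\phi$ real-analytic and $\phi(0,0)\ne 0$. Consequently $t|_{\mathbb{T}^2}=z^{-N}w^{-M}q|_{\mathbb{T}^2}=x^{s}\Psi(x,y)$ with $\Psi$ real-analytic and $\Psi(0,0)\ne 0$, and $\Psi$ is real-valued since $t$ and $x^{s}$ are. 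Evaluating along $y=0$ and letting $x\to 0^{+}$, non-negativity of $t$ forces $\Psi(0,0)>0$; letting $x\to 0^{-}$ then forces $(-1)^{s}>0$, i.e. $s$ is even. In particular $s\ge 2$, which is the first assertion.

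For the second assertion I would use $s\ge 2$ together with the Laurent-ring bookkeeping from the first paragraph. On $\mathbb{T}^2$ one has $|p|^{2}=p\,p^{*}$, so $t/|p|^{2}=z^{-N}w^{-M}p^{s}r/(p\,p^{*})=z^{-N}w^{-M}p^{s-1}r/p^{*}$ as functions on $\mathbb{T}^2\setminus V$. Since $p^{*}\mid q=p^{s}r$ in the Laurent ring and $p^{*}$ is prime, either $p^{*}$ is a unit multiple of $p$ or $p^{*}\mid r$; in either case $p^{*}\mid p^{s-1}r$ (using $s-1\ge 1$), so $z^{-N}w^{-M}p^{s-1}r/p^{*}$ is a genuine element of $\mathbb{C}[z^{\pm1},w^{\pm1}]$, i.e. a trigonometric polynomial. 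This trigonometric polynomial agrees with the everywhere-nonnegative function $t\cdot|p|^{-2}$ on the dense open set $\mathbb{T}^2\setminus V$, hence is non-negative on all of $\mathbb{T}^2$ by continuity, proving $t/|p|^{2}$ is a non-negative trigonometric polynomial.

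The crux is the middle paragraph: turning the scalar inequality $t\ge 0$ into the divisibility $p^{2}\mid q$ requires the local geometric fact that, near a generic point of $V$, $Z_p$ meets $\mathbb{T}^2$ cleanly along a $1$-dimensional arc and $q$ vanishes to order exactly $s$ transverse to that arc inside $\mathbb{T}^2$. Once that parity is in hand, the rest is just unique factorization in the Laurent ring, powered by the associate relation $q^{*}\sim q$ that reality of $t$ provides.
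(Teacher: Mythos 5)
Your proof is correct, but it takes a genuinely different route from the paper's. For $p^2\mid q$ the paper argues by one-variable slicing: writing $q=hp$, it picks a generic $(z_0,w_0)\in Z_p\cap\mathbb{T}^2$ at which $p(\cdot,w_0)$ has a simple zero and $t(\cdot,w_0)\not\equiv 0$, and uses the elementary fact that a non-negative one-variable trigonometric polynomial has zeros of even order on $\mathbb{T}$ to force $h(z_0,w_0)=0$; since this happens at infinitely many points, $p\mid h$ by irreducibility. Your transversality argument (that $Z_p$ meets the totally real $\mathbb{T}^2$ cleanly along an arc at a generic point, so that locally $t=x^{s}\Psi$ with $\Psi(0,0)\neq 0$ real) reaches the same conclusion---indeed the stronger statement that the multiplicity $s$ is even---but at the price of invoking the structure theory of real-analytic sets (existence of the arc $\gamma$ inside the infinite set $Z_p\cap\mathbb{T}^2$, genericity of $v_0$); that is the one place your write-up rests on standard-but-unproved machinery, whereas the paper's slice argument stays entirely at the one-variable level. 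For the second assertion the routes also differ: the paper invokes the fact that an irreducible toral polynomial is $\mathbb{T}^2$-symmetric ($\refl{p}=cp$) to turn $p^2\mid q$ into $t=(\text{unimodular monomial})\,|p|^2 g$ on $\mathbb{T}^2$, while you avoid that fact altogether by unique factorization in $\mathbb{C}[z^{\pm1},w^{\pm1}]$: reality of $t$ gives $q^{*}=z^{-2N}w^{-2M}q$, primality of $p$ and $p^{*}$ there gives $p^{*}\mid p^{s-1}r$ in either case of your dichotomy, and continuity gives non-negativity. A small bonus of your version is that it needs no separate treatment of the case where $p$ is linear in one variable alone, which the paper sets aside at the outset.
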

\begin{proof} Write $q = hp$ for some $h\in \mathbb{C}[z,w]$.  By
  definition of \emph{toral}, $p$ has infinitely many zeros on
  $\mathbb{T}^2$.  The lemma is not difficult in the case where $p$ is
  a linear polynomial in one variable alone, so we assume this is not
  the case.  Suppose $p$ has degree $(n,m)$.  Let $(z_0,w_0)\in
  \mathbb{T}^2\cap Z_p$ with the property that $p(\cdot,w_0)$ has a
  zero of multiplicity one at $z_0$ and $t(\cdot,w_0)$ is not
  identically zero; this will be the case for all but finitely many of
  the $(z,w) \in \mathbb{T}^2\cap Z_p$.  Now, $t(z,w_0) = z^{-N}
  w_0^{-M} h(z,w_0) p(z,w_0)$, and as $t(\cdot,w_0)$ is a non-negative
  trig polynomial of one variable, it must have zeros of even order on
  $\mathbb{T}$.  Hence, $h(z_0,w_0) = 0$.  Therefore, $h$ and $p$
  share infinitely many zeros, and this implies $p$ divides $h$ by
  irreducibility of $p$.  Hence, $p^2$ divides $q$.  Toral polynomials
  are $\mathbb{T}^2$-symmetric in the sense that
\[
\refl{p} = c p 
\]
for some unimodular constant $c$.  So, $t(z,w) = z^{-N} w^{-M}
p(z,w)^2 g(z,w) = z^{-N+n}w^{-M+m} |p(z,w)|^2 g(z,w)$ for some $g \in
\mathbb{C}[z,w]$.  Thus, $t/|p|^2$ is a non-negative trig polynomial.
\end{proof}

\begin{corollary} If $t$ is a non-negative trigonometric polynomial,
  then $t$ can be factored into $t(z,w) = |p(z,w)|^2 s(z,w)$ where $p
  \in \mathbb{C}[z,w]$ is a toral polynomial (or is a constant) and
  $s$ is a non-negative trigonometric polynomial with finitely many
  zeros on $\mathbb{T}^2$.
\end{corollary}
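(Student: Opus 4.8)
The plan is to prove the corollary by induction on the total degree of the polynomial $q(z,w) = z^N w^M t(z,w) \in \mathbb{C}[z,w]$, repeatedly peeling off irreducible toral factors via the preceding lemma. The base case is immediate: if $t$ has only finitely many zeros on $\mathbb{T}^2$, take $p \equiv 1$ and $s = t$.

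For the inductive step, suppose $t$ (equivalently $q$) has infinitely many zeros on $\mathbb{T}^2$, i.e. $Z_q \cap \mathbb{T}^2$ is infinite. Factor $q$ into irreducibles, $q = c\prod_i p_i^{a_i}$; then $Z_q \cap \mathbb{T}^2 = \bigcup_i \bigl(Z_{p_i} \cap \mathbb{T}^2\bigr)$, and since a finite union of finite sets is finite, some $Z_{p_i} \cap \mathbb{T}^2$ must be infinite. That $p_i$ is then a non-constant irreducible polynomial with infinitely many zeros on $\mathbb{T}^2$; having only constants and itself as factors, it is toral. Now apply the lemma with $p = p_i$: it gives $p_i^2 \mid q$ and shows that $t_1 := t/|p_i|^2$ is again a non-negative trigonometric polynomial. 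Using $\refl{p_i}(z,w) = z^n w^m \overline{p_i(z,w)}$ on $\mathbb{T}^2$ (where $(n,m) = \deg p_i$), one sees that the polynomial associated to $t_1$ is, up to a monomial unit, $q/(p_i \refl{p_i})$, which has total degree strictly less than that of $q$ because $p_i$ is non-constant. So the induction hypothesis applies to $t_1$: we may write $t_1 = |p'|^2 s$ with $p'$ toral or constant and $s$ a non-negative trigonometric polynomial with finitely many zeros on $\mathbb{T}^2$.

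Setting $p = p_i p'$ then yields $t = |p_i|^2 t_1 = |p_i p'|^2 s$, so it only remains to check that $p$ is toral (or constant). This is a routine point: every irreducible factor of $p_i p'$ is an irreducible factor of $p_i$ or of $p'$, hence has infinitely many zeros on $\mathbb{T}^2$; and any factor $g$ of $p_i p'$, whose zero set contains that of each of its irreducible factors, therefore also has infinitely many zeros on $\mathbb{T}^2$. The only step needing genuine care is the strict degree decrease at each stage, which guarantees the recursion terminates — and this follows immediately from the fact that the extracted irreducible toral factor $p_i$ is non-constant; everything else is bookkeeping with the lemma already established.
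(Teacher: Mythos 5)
Your proof is correct and takes essentially the approach the paper intends: the corollary is stated there without proof as an immediate consequence of the preceding lemma, and your argument is just the natural formalization — extract an irreducible factor of $q=z^Nw^Mt$ with infinitely many zeros on $\mathbb{T}^2$ (hence toral), use the lemma to divide out $|p_i|^2$, and induct on the total degree of the associated polynomial, which drops by $2\deg p_i$ at each step. The two points needing care — the strict degree decrease for the polynomial associated to $t/|p_i|^2$, and that the product $p_ip'$ is again toral — are both handled correctly.
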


This corollary divides the study of characterizing trig polynomials
with a Fej\'er-Riesz factorization into the question of when a toral
polynomial has no zeros on the bidisk and when a non-negative trig
polynomial finitely many zeros on the torus has a Fej\'er-Riesz
factorization. 

To introduce the next result we recall that every positive two
variable trigonometric polynomial can be written as a sum of squares
of two variable polynomials. This was proved in Dritschel \cite{mD04}
and reproved in Geronimo-Lai \cite{GL06} (this latter paper has a
summary of related known results).  It is unknown if all non-negative
trigonometric polynomials can be written as a sum of squares of two
variable polynomials.  The above corollary says that it is enough to
address this question for trig polynomials with finitely many zeros.
On the other hand, if it is true that all non-negative trig
polynomials are equal to a sum of squares of polynomials, then our
approach allows us to characterize when they can be written as a
single square of a polynomial with no zeros on the bidisk.

\begin{theorem} Suppose $p_1, \dots, p_N \in \mathbb{C}[z,w]$ have
  degree at most $(n,m)$ and no common factor.  Also, assume that for
  some $j$, $p_j(0,0) \ne 0$.  Let
\[
t(z,w) = \sum_{j=1}^{N} |p_j(z,w)|^2 \text{ for } (z,w) \in \mathbb{T}^2
\]
and define $d\mu = \frac{1}{t} d\sigma$.  The trigonometric polynomial $t$ can
be written as $t(z,w)= |p(z,w)|^2$, where $p$ has no zeros on the
bidisk, if and only if 
\[
\aocond
\]
\end{theorem}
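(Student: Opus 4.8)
The plan is to run the Bernstein--Szeg\H{o} machinery of Sections~\ref{sec:ao}--\ref{sec:BernSzeg} in the same spirit as the reproof of the Geronimo--Woerdeman theorem above; the genuinely new input is the hypothesis that $p_1,\dots,p_N$ share no common factor, which is what will force the candidate polynomial $p$ to have no zeros on $\mathbb{D}^2$. For the ``only if'' direction I would argue as follows. Suppose $t=|p|^2$ on $\mathbb{T}^2$ with $p$ having no zeros on $\mathbb{D}^2$. Then $p$ cannot be divisible by $z$ or by $w$ (such a factor would make $p$ vanish on a coordinate slice of $\mathbb{D}^2$), so $p(0,\cdot)$ and $p(\cdot,0)$ are not identically zero, and reading off the extreme $z$- and $w$-Fourier coefficients of $|p|^2=t$ — whose Fourier support lies in $|j|\le n$, $|k|\le m$ — forces $\deg p\le(n,m)$. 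Then $d\mu=\tfrac1t\,d\sigma=\tfrac1{|p|^2}\,d\sigma$ is a Bernstein--Szeg\H{o} measure of degree $\le(n,m)$, so Theorem~\ref{BernSzegthm} gives $\aocond$ immediately.

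For the ``if'' direction, assume $d\mu=\tfrac1t\,d\sigma$ satisfies $\aocond$. Since $t=\sum_j|p_j|^2$, one has $\|p_j\|_{L^2(\mu)}^2=\int_{\mathbb{T}^2}\frac{|p_j|^2}{t}\,d\sigma\le 1$, so every $p_j$ lies in $\gkbox_\mu$; because some $p_j(0,0)\ne 0$ this also shows $\gkboxllperp_\mu\ne\{0\}$. Fix a unit-norm $p\in\gkboxllperp_\mu$. By Corollary~\ref{factorcorollary} we may write $p=gh$ where $g$ divides every element of $\gkbox_\mu$; in particular $g$ divides each $p_j$, and since the $p_j$ have no common factor, $g$ is a constant. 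Equivalently $p$ and $\refl{p}$ have no common factor, and then Corollary~\ref{factorcorollary} tells us $p$ (being a constant times $h$) has no zeros on $\cbidisk\setminus\mathbb{T}^2$, hence none on $\mathbb{D}^2$.

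Next I would invoke Corollary~\ref{BernSzegcor}, with the general measure there taken to be our $\mu$ and the Bernstein--Szeg\H{o} measure there taken to be $d\nu:=\tfrac1{|p|^2}\,d\sigma$: since $\aocond$ holds, $p\in\gkboxllperp_\mu$ has unit norm and is coprime to $\refl{p}$, we obtain $\gkbox_\mu=\gkbox_\nu$ with the $L^2(\mu)$ and $L^2(\nu)$ inner products agreeing on this space. Applying this with $f=p_j$ and summing over $j$,
\[
\int_{\mathbb{T}^2}\frac{t}{|p|^2}\,d\sigma=\sum_j\|p_j\|_{L^2(\nu)}^2=\sum_j\|p_j\|_{L^2(\mu)}^2=\int_{\mathbb{T}^2}\frac{t}{t}\,d\sigma=1=\int_{\mathbb{T}^2}\frac{|p|^2}{t}\,d\sigma ,
\]
the last equality being $\|p\|_{L^2(\mu)}^2=1$. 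The Cauchy--Schwarz argument from the Geronimo--Woerdeman reproof then applies verbatim: writing $1=\int_{\mathbb{T}^2}\frac{|p|}{\sqrt t}\cdot\frac{\sqrt t}{|p|}\,d\sigma$ and bounding by the product of the two $L^2(\sigma)$ norms, each equal to $1$, we get equality in Cauchy--Schwarz, so $|p|^2/t$ is $\sigma$-a.e.\ constant, and the constant is $1$ by the displayed computation; hence $t=|p|^2$ on $\mathbb{T}^2$.

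The step I expect to require the most care is handling the possibility that $\mu$ has infinite mass, which is exactly what happens when $t$ vanishes somewhere on $\mathbb{T}^2$: one then cannot assert that the monomials $z^jw^k$ all lie in $\gkbox_\mu$, nor pass directly to equality of moments as in the strictly positive case. The remedy is to work only with the $p_j$ — which lie in $\gkbox_\mu$ automatically by the sum-of-squares form of $t$ — and to notice that both $\int\frac{t}{|p|^2}\,d\sigma$ and $\int\frac{t}{t}\,d\sigma$ are sums of the quantities $\|p_j\|^2$ in the two inner products, which agree on $\gkbox_\mu$. One should also verify that $\mu$ meets the standing assumptions (the support of $\tfrac1t\,d\sigma$ is all of $\mathbb{T}^2$ because $t\not\equiv 0$) and that $t$ has only finitely many zeros on $\mathbb{T}^2$, which holds because $\bigcap_j Z_{p_j}$ is finite whenever the $p_j$ have no common factor.
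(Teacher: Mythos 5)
Your proposal is correct and follows essentially the same route as the paper: the ``only if'' direction via Theorem \ref{BernSzegthm}, and the ``if'' direction via Corollary \ref{factorcorollary} (with the no-common-factor hypothesis forcing the toral factor $q_1$ to be constant), Corollary \ref{BernSzegcor} to match the inner products with the Bernstein--Szeg\H{o} measure $\tfrac{1}{|p|^2}d\sigma$, and the equality case of Cauchy--Schwarz. The only difference is that you spell out a couple of points the paper leaves implicit (the forced degree bound $\deg p \leq (n,m)$ in the ``only if'' direction and the verification of the standing assumptions on $\mu$), which is harmless.
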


If every $p_j$ vanishes at the origin, we could apply a M\"obius
transformation to make sure not all of the polynomials vanish at the
origin and then apply the above theorem to check whether the trig
polynomial has the desired factorization.  

\begin{proof} 
Our proof in the case of a strictly positive trig polynomial carries
over with some modifications.  The ``only if'' direction again follows
from Theorem \ref{BernSzegthm}.  Let us prove that $\aocond$ implies
$t$ has a Fej\'er-Riesz type of decomposition.  

Since $t$ is of the given form it is clear that each $p_j \in
L^2(\mu)$, as $|p_j|^2/t \leq 1$ on the torus. The assumption that
$p_j(0,0) \ne 0$ guarantees that $\gkboxllperp_\mu$ is nonempty (since
we then know $\gkboxll_\mu \ne \gkbox_\mu$).  Let $q$ be a unit norm
polynomial in $\gkboxllperp_\mu$.  By Corollary \ref{factorcorollary},
$q$ has no zeros on the bidisk and finitely many zeros on the
torus. (The corollary says $q$ can be factored as $q_1q_2$ where $q_1$
divides every element of $\gkbox_\mu$ and $q_2$ is of the desired
type, but we assumed $p_1,\dots, p_N$ have no common factor.  Hence,
$q_1$ must be a constant.) Define
\[
d\rho = \frac{1}{|q(z,w)|^2} d\sigma.
\]
By Corollary \ref{BernSzegcor}, $\gkbox_\mu = \gkbox_\rho$ and the
inner products of $L^2(\mu)$ and $L^2(\rho)$ agree on $\gkbox_\mu$.
This says in particular that
\[
p_j/q \in L^2(\mathbb{T}^2)
\]
for each $j$.  Just as in the proof in the strictly positive case, we
can prove
\[
1 \leq ||q||_{L^2(\mu)} \sqrt{||t||_{L^1(\rho)}}
\]
by an application of Cauchy-Schwarz.  Since $q$ has unit norm,
$||q||_{L^2(\mu)} =1$, and since the inner products agree, we have
\[
||t||_{L^1(\rho)} = \sum_{j=1}^{N} ||p_j||^2_{L^2(\rho)} =
  \sum_{j=1}^N ||p_j||^2_{L^2(\mu)} = ||t||_{L^1(\mu)} = 1.
\]
Therefore, just as in the proof for the strictly positive case, we
have equality in Cauchy-Schwarz, which implies $t = |q|^2$ on the
torus.  
\end{proof}

So, the above theorem addresses non-negative trig polynomials of a
specific form.  The above proof would also work if we could decompose
$t$ as
\[
t(z,w) = \sum_{j=1}^{N} p_j(z,w)\overline{q_j(z,w)}
\]
where $p_j, q_j \in L^2(\frac{1}{t} d\sigma)$ have no common factor and not
all vanish at $(0,0)$.  

\begin{question} \label{trigquestion} Can \emph{every} non-negative two
  variable trigonometric polynomial $t$ be decomposed as
\[
 t(z,w) = \sum_{j=1}^{N} p_j(z,w)\overline{q_j(z,w)}
\]
where $p_j, q_j$ are in $L^2(\frac{1}{t} d\sigma)$ and have no common factor?
\end{question}

Next, we tackle toral factors of non-negative trig polynomials.

\begin{theorem} An irreducible toral polynomial $p \in
  \mathbb{C}[z,w]$ has no zeros in the bidisk if and only if 
\[
\refl{\frac{\partial p}{\partial z}} + \refl{\frac{\partial
    p}{\partial w}}
\]
has no zeros in the closed bidisk and finitely many zeros on the
torus.  In this case, all of the zeros occur at singularities of $Z_p$
(i.e. common zeros of $\frac{\partial p}{\partial z}$ and
$\frac{\partial p}{\partial w}$).
\end{theorem}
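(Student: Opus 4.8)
The plan is to reduce everything to the location of the zeros of the single polynomial
\[
P:=(n+m)\,p-z\,p_z-w\,p_w,\qquad p_z:=\tfrac{\partial p}{\partial z},\quad p_w:=\tfrac{\partial p}{\partial w},
\]
where $(n,m)=\deg p$. Since $p$ is irreducible and toral it is $\mathbb T^2$-symmetric, $\refl p=c\,p$ with $|c|=1$. Differentiating $z^nw^m\overline{p(1/\bar z,1/\bar w)}=c\,p(z,w)$ in $z$ and in $w$ gives $\refl{p_z}=c(np-zp_z)$ and $\refl{p_w}=c(mp-wp_w)$ (with $p_z$ reflected at degree $(n-1,m)$ and $p_w$ at $(n,m-1)$), hence $\refl{p_z}+\refl{p_w}=c\,P$, so these polynomials have the same zero set; also $P\not\equiv0$, since $\hat P(j,k)=(n+m-j-k)\hat p(j,k)$ is nonzero on a lowest total-degree monomial of $p$. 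We may assume $n,m\ge1$ (otherwise $p=z-z_0$ or $w-w_0$ with unimodular $z_0,w_0$, $P$ is a nonzero constant, and both sides of the equivalence hold trivially).

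$(\Rightarrow)$ Assume $p$ has no zeros on $\mathbb D^2$. By Lemma \ref{zeroslemma} and irreducibility $p$ has no zeros on $(\mathbb D\times\mathbb T)\cup(\mathbb T\times\mathbb D)$ either, so $Z_p\cap\overline{\mathbb D}^2=Z_p\cap\mathbb T^2$. First rule out zeros of $P$ off $Z_p$: for every $w\in\overline{\mathbb D}$ the nonzero polynomial $p(\cdot,w)$ has all its roots in $\{|\zeta|\ge1\}$, so writing it as a product over those roots and using the elementary inequality $\operatorname{Re}\frac{\eta}{1-\eta}\ge-\tfrac12$ for $|\eta|\le1$, $\eta\ne1$, gives $\operatorname{Re}\frac{zp_z}{p}(z,w)\le\frac n2$; symmetrically $\operatorname{Re}\frac{wp_w}{p}(z,w)\le\frac m2$; adding, $\operatorname{Re}\frac{P}{p}(z,w)=(n+m)-\operatorname{Re}\frac{zp_z+wp_w}{p}\ge\frac{n+m}{2}>0$ wherever $p\ne0$, so $P\ne0$ there. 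Thus the zeros of $P$ in $\overline{\mathbb D}^2$ lie on $Z_p\cap\mathbb T^2$. At a singular point of $Z_p$ one has $p=p_z=p_w=0$, so $P=0$ there; at a smooth point $(z_0,w_0)\in Z_p\cap\mathbb T^2$ (say $p_w(z_0,w_0)\ne0$), write $Z_p$ locally as $w=g(z)$: then $|g|>1$ on $\mathbb D$ near $z_0$ while $|g(z_0)|=1$, and Hopf's boundary lemma applied to the nonnegative superharmonic function $1-|1/g|^2$ (which vanishes at $z_0\in\partial\mathbb D$) gives $\operatorname{Re}\frac{z_0g'(z_0)}{g(z_0)}<0$, i.e.\ $\operatorname{Re}\frac{z_0p_z}{w_0p_w}(z_0,w_0)>0$, whence $P(z_0,w_0)=-w_0p_w\bigl(1+\tfrac{z_0p_z}{w_0p_w}\bigr)\ne0$. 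So the zeros of $P$ in $\overline{\mathbb D}^2$ are exactly the finitely many (since $p$ is irreducible) singular points of $Z_p$ on $\mathbb T^2$, which gives all three assertions of this direction.

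$(\Leftarrow)$ Argue contrapositively: assume $p$ has a zero in $\mathbb D^2$. If $p(0,0)=0$, or if $p$ has a singular point in $\mathbb D^2$, then $P=(n+m)p-zp_z-wp_w$ vanishes there (at $(0,0)$ the factors $z,w$ kill the last two terms; at a singular point $p_z=p_w=0$; and $p=0$), so $\refl{p_z}+\refl{p_w}$ has a zero in $\mathbb D^2$ and the conclusion fails. The remaining case — $p$ has only smooth zeros in $\mathbb D^2$, none at the origin — is the crux. Here I would use Lemma \ref{zeroslemma} to see that the number $\ell\ge1$ of zeros of $p(z,\cdot)$ lying in $\mathbb D$ is independent of $z\in\mathbb D$, so $\pi_1$ restricts to a proper $\ell$-sheeted branched cover $C:=Z_p\cap\mathbb D^2\to\mathbb D$ of Riemann surfaces; along each branch $w=g(z)$ of $C$ one computes $P=-p_w\,(g-zg')$, so the zeros of $P$ in $C$ are precisely the ramification points of the meromorphic function $w/z$ on $C$. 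On the closure $\overline C$, whose boundary lies on $\mathbb T^2$ (where $|w/z|=1$), an argument-principle/degree count comparing the winding of $w/z$ along $\partial\overline C$ with its zeros and poles in $C$ forces either a ramification point of $w/z$ — a zero of $P$ in $\mathbb D^2$ — or a toral factor of $P$ (equivalently, for a unit-norm polynomial $q$ in the Bernstein--Szeg\H{o} subspace attached to $P$, $q$ and $\refl q$ sharing a factor, cf.\ Corollary \ref{factorcorollary}), hence infinitely many zeros of $P$ on $\mathbb T^2$; in either case the conclusion fails.

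The step I expect to be the main obstacle is precisely this last one: the pointwise boundary (Hopf/Schwarz) estimates only locate $P$ on $Z_p\cap\mathbb T^2$, so turning the global hypothesis ``$p$ has a zero in $\mathbb D^2$'' (together with torality) into a zero of $\refl{p_z}+\refl{p_w}$ inside the bidisk, or a toral factor, seems to require the degree-theoretic input above — or, alternatively, invoking the correspondence between distinguished varieties and Bernstein--Szeg\H{o} polynomials from Knese \cite{gK08b}, of which the present theorem is essentially the defining-polynomial reformulation.
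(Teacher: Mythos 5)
Your forward direction is correct and genuinely different from the paper's treatment: the paper simply cites Knese \cite{gK08b} for the fact that, for a $\mathbb{T}^2$-symmetric $p$ with no zeros on $\mathbb{D}^2$, the polynomial $\refl{p_z}+\refl{p_w}$ (where $p_z=\partial p/\partial z$, $p_w=\partial p/\partial w$) is zero-free on $\cbidisk$ away from the singular points of $Z_p$, whereas you prove this directly via the root-product estimate $\operatorname{Re}(zp_z/p)\le n/2$ off $Z_p$ and a Hopf-lemma computation at smooth torus zeros. Your identity $\refl{p_z}+\refl{p_w}=c\bigl((n+m)p-zp_z-wp_w\bigr)$ is exactly the identity from \cite{gK08b} that the paper's proof rests on, so this half is a self-contained alternative and appears sound.

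The converse, however, has a genuine gap. You argue contrapositively, and in the main case (only smooth zeros of $p$ in $\mathbb{D}^2$, none at the origin) you try to manufacture a zero of $P=(n+m)p-zp_z-wp_w$ inside $\mathbb{D}^2$, or a toral factor of $P$, by a degree count on $Z_p\cap\mathbb{D}^2$. Two things break. First, your appeal to Lemma \ref{zeroslemma} to conclude that the number of roots of $p(z,\cdot)$ in $\mathbb{D}$ is independent of $z\in\mathbb{D}$ is not available: that lemma assumes $p$ has no zeros on $\mathbb{D}^2$, which is precisely the hypothesis you are negating. When $p$ does vanish in the bidisk, $Z_p$ may cross $\mathbb{D}\times\mathbb{T}$, the sheet number can jump, and the assertion that $\partial\bigl(\overline{Z_p\cap\mathbb{D}^2}\bigr)\subset\mathbb{T}^2$ (on which your winding-number comparison depends) is the distinguished-variety property, which a toral $p$ with interior zeros has no reason to satisfy. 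Second, the argument-principle step that is supposed to force a ramification point of $w/z$ or a toral factor is only asserted, and you flag it yourself as the obstacle. The missing idea is that this direction should be proved directly from the hypothesis on $\refl{p_z}+\refl{p_w}$ rather than without it: assuming $\refl{p_z}+\refl{p_w}$ has no zeros on $\mathbb{D}^2$ and finitely many on $\mathbb{T}^2$, the function
\[
\phi(z,w)=\frac{zp_z(z,w)+wp_w(z,w)}{\refl{p_z}(z,w)+\refl{p_w}(z,w)}
\]
is a rational inner function on the bidisk (numerator and denominator are reflections of one another, so $|\phi|=1$ on $\mathbb{T}^2$), hence $|\phi|<1$ on $\mathbb{D}^2$; and your own identity shows that a zero of $p$ at an interior point would give $zp_z+wp_w=-(\refl{p_z}+\refl{p_w})$ there, i.e.\ $|\phi|=1$ at an interior point, a contradiction. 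This is the paper's argument, and you already derived every ingredient needed for it.
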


The above reflections are performed at the degrees of $\partial
p/\partial z$ and $\partial p/\partial w$ that would generically be
expected.  Namely, if $p$ has degree $(n,m)$, we reflect $\partial
p/\partial z$ at the degree $(n-1,m)$.

\begin{proof} If $p$ is toral, then $p$ is necessarily $\mathbb{T}^2$
  symmetric, meaning $p$ is a unimodular constant times $\refl{p}$
  (and in fact we may assume $p = \refl{p}$ by multiplying by an
  appropriate constant).  It is proved in Knese \cite{gK08b} that if
  $p$ is $\mathbb{T}^2$ symmetric and has no zeros in the bidisk, then
\[
\refl{\frac{\partial p}{\partial z}} + \refl{\frac{\partial
    p}{\partial w}}
\]
has no zeros in the set $\cbidisk$ except possibly at singularities of
$Z_p$ (and there can be at most finitely many singularities).  

Conversely, suppose $\refl{\frac{\partial p}{\partial z}} +
\refl{\frac{\partial p}{\partial w}}$ has no zeros in the bidisk and
finitely many zeros on the torus.  This implies
\[
\phi(z,w) = \frac{z\frac{\partial p}{\partial z}(z,w) + w \frac{\partial
  p}{\partial w}(z,w)}{\refl{\frac{\partial p}{\partial z}}(z,w) + \refl{\frac{\partial
    p}{\partial w}}(z,w)}
\]
is a (non-constant) inner function on the bidisk, and must be bounded
by 1 in modulus on the bidisk.

It is also proved in Knese \cite{gK08b} that if $p$ is $\mathbb{T}^2$
symmetric, then
\[
(n+m)p(z,w) = z\frac{\partial p}{\partial z}(z,w) + w \frac{\partial
  p}{\partial w}(z,w) +\refl{\frac{\partial p}{\partial z}}(z,w) + \refl{\frac{\partial
    p}{\partial w}}(z,w).
\]
So, if $p(z,w)=0$ for some $(z,w) \in \mathbb{D}^2$, then $|\phi(z,w)|
= 1$, which is a contradiction.  Therefore, $p$ has no zeros in the bidisk.
\end{proof}

\begin{remark} We view this as progress on determining which
  non-negative trig polynomials have a Fej\'er-Riesz decomposition for
  the following reasons. A non-negative trig polynomial has a unique
  toral factor $|p|^2$ and determining whether $p$ has no zeros in the
  bidisk can be approached by looking at each factor of $p$. For the
  factors $f$ whose zero sets have no singularities on the torus, the
  above theorem says we can check whether $\refl{\frac{\partial
  f}{\partial z}} + \refl{\frac{\partial f}{\partial w}}$ has no zeros
  on the closed bidisk.  This can be accomplished by using a two
  variable Schur-Cohn test, such as the one presented in
  Geronimo-Woerdeman \cite{GW04}. For factors with singularities on
  the torus, one would need to adapt the Schur-Cohn test to test for
  no zeros on the closed bidisk with the exception of finitely many
  zeros on the torus.  We leave this for future work.
\end{remark} 

To summarize, given a non-negative trig polynomial $t$ we can factor
it into $t(z,w) = |p(z,w)|^2 s(z,w)$ where $p$ is a toral polynomial
and $s$ is a non-negative trig polynomial with finitely many zeros on
$\mathbb{T}^2$.  The above remark addresses cases where we can
determine whether $p$ has no zeros in the bidisk.  If $s$ has no zeros
on the torus, the Geronimo-Woerdeman theorem characterizes whether it
can be factored as $|q|^2$ where $q$ has no zeros on the closed
bidisk.  We have extended this characterization to a class of
non-negative trig polynomials with a special form, for which it is
unknown whether this is all non-negative trig polynomials.

\section{Application to Distinguished Varieties} \label{distvar}

One of our main applications is a bounded analytic extension theorem
for \emph{distinguished varieties}, which we now define.

\begin{definition} \label{def:distvar} 
A nonempty subset $V\subset \mathbb{C}^2$ is a \emph{distinguished
  variety} if $V$ is an algebraic curve: there exists $p \in
  \mathbb{C}[z,w]$ such that
\[
V = \{(z,w) \in \mathbb{C}^2: p(z,w) = 0\}
\]
and $V$ exits the bidisk through the distinguished boundary:
\[
\partial (V \cap \overline{\mathbb{D}^2}) \subset \mathbb{T}^2.
\]
\end{definition}

It is proved in Knese \cite{gK08b} that if $V$ is defined via a
polynomial $p$ of minimal degree then 
\[
V \subset \mathbb{D}^2 \cup \mathbb{T}^2 \cup \mathbb{E}^2
\]
where $\mathbb{E} = \mathbb{C}\setminus \overline{\mathbb{D}}$.

In Knese \cite{gK08b}, we proved that if $V$ is a distinguished
variety with no singularities on $\mathbb{T}^2$, then every polynomial
$f \in \mathbb{C}[z,w]$, considered as a function on $V\cap
\mathbb{D}^2$, has an extension to a rational function $F$ on
$\mathbb{D}^2$ such that
\[
\sup_{\mathbb{D}^2} |F| \leq C \sup_{V\cap \mathbb{D}^2} |f|
\]
for some constant $C$.  We extend this result to \emph{all}
distinguished varieties (i.e. singularities are allowed) in Theorem
\ref{extendthm} below.  The price we pay is that instead of getting a
constant increase in norm, we control the growth of the extended
function.  Before we present the theorem a little background is
required.

The use of the Cole-Wermer sums of squares formula is essential to the
work in Knese \cite{gK08b}, and if we use Theorem \ref{mainthm} in its
place, the following lengthy theorem can be proved by slightly
modifying the proofs in \cite{gK08b}.

\begin{theorem} \label{sosdist} Let $V$ be a distinguished variety
  given as the zero set of a polynomial $p \in \mathbb{C}[z,w]$ of
  degree $(n,m)$. Let $a,b > 0$ be positive real numbers.  Then,

\begin{itemize}
\item there exists a vector polynomial $\vec{P} \in
  \mathbb{C}^{n}[z,w]$ of degree at most $(n-1,m)$ and a vector
  polynomial $\vec{Q} \in \mathbb{C}^m[z,w]$ of degree at most
  $(n,m-1)$ such that
\[
\begin{aligned}
  (bm-an) |p(z,w)|^2 &+ 2\text{Re} [(a z\frac{\partial p}{\partial z}(z, w)-b w
    \frac{\partial p}{\partial w}(z,w))\overline{p(z,w)}]\\ & +
  (1-|z|^2) |\vec{P}(z,w)|^2 \\ =&
  (1-|w|^2) |\vec{Q}(z,w)|^2,
\end{aligned}
\]
\item if $p$ is a product of distinct irreducible factors, then none of the
entries of $\vec{P}$ or $\vec{Q}$ can vanish identically on $V$,

\item  there is a $m\times m$ matrix-valued rational inner function
  $\Phi: \mathbb{D} \to \mathbb{C}^{m\times m}$ such that $V$ has the
  following representation
\[
V\cap \mathbb{D}^2 = \{(z,w) \in \mathbb{D}^2: \det(wI_m - \Phi(z)) =
0\}
\]

\item $\vec{Q}$ can be chosen to have at most finitely many zeros on $V$
  and to satisfy
\[
\Phi(z) \vec{Q}(z,w) = w \vec{Q}(z,w)
\]
for all $(z,w) \in V$ and when we write
\[
\vec{Q}(z,w) = Q(z) \mathbf{\Lambda}_m(w)
\]
where $Q(z)$ is an $m\times m$ matrix polynomial of degree at most $n$
in each entry, we have that $Q(z)$ is invertible for all $z \in
\mathbb{D}$ and for all $z \in \mathbb{T}$ with the exception of $z
\in \pi_1(S)$, where $S$ is the set of singularities of $V$. In
particular, $\vec{Q}(z,w)$ has no zeros in $\mathbb{D}^2$.
\end{itemize}

\end{theorem}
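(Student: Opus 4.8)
The plan is to re-run the argument of Knese \cite{gK08b} essentially verbatim, with the Cole--Wermer sums of squares formula replaced throughout by Theorem \ref{mainthmfull}; the genuinely new input is that Theorem \ref{mainthmfull} supplies, along with the sums of squares identity, the invertibility of $E(w)$ and of $z^n\overline{F(1/\bar z)}$ on $\overline{\mathbb D}$ up to finitely many points of $\mathbb T$ coming from $Z_q$, and this is exactly what upgrades the ``$\vec Q$ has no zeros in $\mathbb D^2$'' conclusion of \cite{gK08b} (valid there only when $V$ had no torus singularities) to the invertibility-of-$Q(z)$-off-$\pi_1(S)$ conclusion here. First I would produce the associated atoral polynomial. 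Since $V$ is distinguished, its minimal defining polynomial $p$ (of degree $(n,m)$) is a product of irreducible toral polynomials, hence $\mathbb T^2$-symmetric, and for the second bullet I may assume these factors are distinct. Because $V\subset\mathbb D^2\cup\mathbb T^2\cup\mathbb E^2$ (where $\mathbb E=\mathbb C\setminus\overline{\mathbb D}$), the $z$-flipped polynomial $q(z,w):=z^np(1/z,w)$ is again a product of irreducible toral polynomials and has \emph{no zeros on $\mathbb D^2$}: a zero $(z_0,w_0)\in\mathbb D^2$ of $q$ would force $(1/z_0,w_0)\in Z_p\cap(\mathbb E\times\mathbb D)=\varnothing$. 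For $a,b>0$ I would then take $\mathfrak p_{a,b}$ to be a weighted combination of the reflected first partials of $q$ --- for $a=b$ the polynomial $\refl{\frac{\partial q}{\partial z}}+\refl{\frac{\partial q}{\partial w}}$ appearing in the last theorem of Section \ref{sec:Fejer} applied to $q$. Using that theorem (applied factorwise to the toral $q$) together with the $\mathbb T^2$-symmetry identities $z\frac{\partial q}{\partial z}=nq-\refl{\frac{\partial q}{\partial z}}$, $w\frac{\partial q}{\partial w}=mq-\refl{\frac{\partial q}{\partial w}}$, one checks that $\mathfrak p_{a,b}$ has degree at most $(n,m)$ with vanishing top $w$-coefficient, no zeros on $\mathbb D^2$, and only finitely many zeros on $\mathbb T^2$, all lying over singular points of $V$; thus $\pi_1(Z_{\mathfrak p_{a,b}}\cap\mathbb T^2)\subseteq\pi_1(S)$ and similarly for $\pi_2$.

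Next I would apply Theorem \ref{mainthmfull} to $\mathfrak p:=\mathfrak p_{a,b}$ at bidegree $(n,m)$, obtaining $\mathbf E\in\mathbb C^n[z,w]$ and $\mathbf F\in\mathbb C^m[z,w]$ such that, writing $\mathbf E=E(w)\bLam_n(z)$ and $\mathbf F=F(z)\bLam_m(w)$, the matrix $E(w)$ is invertible on $\overline{\mathbb D}\setminus(\mathbb T\cap\pi_2(Z_{\mathfrak p}))$, the matrix $z^n\overline{F(1/\bar z)}$ is invertible on $\overline{\mathbb D}\setminus(\mathbb T\cap\pi_1(Z_{\mathfrak p}))$, and the polarized identity of Theorem \ref{mainthmfull}(3) holds for $\mathfrak p$. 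I then set $\vec P:=\refl{\mathbf E}$ and $\vec Q:=\refl{\mathbf F}$. Reflecting the identity of Theorem \ref{mainthmfull}(3) turns it into the same identity for $\mathfrak p$ with $\vec P,\vec Q$ in place of $\mathbf E,\mathbf F$; then substituting the chain-rule expressions for $\frac{\partial q}{\partial z},\frac{\partial q}{\partial w}$ in terms of $p,\frac{\partial p}{\partial z},\frac{\partial p}{\partial w}$ and again invoking the $\mathbb T^2$-symmetry identities for $p$ converts it into the first displayed formula of the theorem, the coefficients $bm-an$ and $az\frac{\partial p}{\partial z}-bw\frac{\partial p}{\partial w}$ emerging precisely from the weights $a,b$. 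Because $\vec Q=\refl{\mathbf F}$, one has $Q(z)=z^n\overline{F(1/\bar z)}X$ with $X$ the anti-diagonal permutation (as in the proof of Theorem \ref{mainthmfull}), so $Q(z)$ is invertible wherever $z^n\overline{F(1/\bar z)}$ is, i.e.\ on $\mathbb D$ and on $\mathbb T\setminus\pi_1(Z_{\mathfrak p}\cap\mathbb T^2)$, which contains $\mathbb T\setminus\pi_1(S)$; invertibility on $\mathbb D$ also gives that $\vec Q$ has no zeros in $\mathbb D^2$, since $\vec Q(z,w)=Q(z)\bLam_m(w)$ and $\bLam_m(w)\neq\mathbf 0$.

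It remains to produce $\Phi$ together with the eigenvector relation and to check the finiteness and non-vanishing assertions, and here I would transplant the lurking-isometry/colligation argument from \cite{gK08b} --- the same mechanism as in the proof of Lemma \ref{dimensionlemma}. Rearranging the polarized identity for $\mathfrak p$ yields a unitary sending $(\mathfrak p,z\vec P,w\vec Q)$ to $(\refl{\mathfrak p},\vec P,\vec Q)$, whose block decomposition realizes $\refl{\mathfrak p}/\mathfrak p$ as a transfer function; separating the $w$-block and using that $(z\frac{\partial q}{\partial z}+w\frac{\partial q}{\partial w})/(\refl{\frac{\partial q}{\partial z}}+\refl{\frac{\partial q}{\partial w}})$ is inner on $\mathbb D^2$ identifies $V\cap\mathbb D^2$ with $\{(z,w)\in\mathbb D^2:\det(wI_m-\Phi(z))=0\}$ for a rational inner $\Phi:\mathbb D\to\mathbb C^{m\times m}$, and exhibits $\vec Q$ as a polynomial eigenvector, $\Phi(z)\vec Q(z,w)=w\vec Q(z,w)$ on $V$. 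The non-vanishing of the entries of $\vec P$ and $\vec Q$ on $V$ follows, as in \cite{gK08b}, from the distinctness of the irreducible factors of $p$: no nonzero entry of $\vec P$ or $\vec Q$ can be divisible by the radical of $p$, by a bidegree count against the $\mathbb T^2$-symmetry of $p$; consequently $Z_{\vec Q}$ shares no irreducible component with $V$, so $\vec Q$ has only finitely many zeros on $V$.

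I expect the main obstacle to be the localization carried out at the end of the first paragraph: verifying that the finitely many zeros of the associated atoral polynomial $\mathfrak p_{a,b}$ on $\mathbb T^2$ project under $\pi_1$ into the singular set $\pi_1(S)$ of $V$, so that via Lemma \ref{BernSzegdivisors} and Proposition \ref{fullrankprop} the matrix $Q(z)$ fails to be invertible on $\mathbb T$ only over singularities of $V$. Everything else is either a direct citation of Theorem \ref{mainthmfull} or bookkeeping transplanted from \cite{gK08b}.
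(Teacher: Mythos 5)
Your proposal follows essentially the same route as the paper's own ``Guide to the proof'': pass to $q(z,w)=z^np(1/z,w)$, apply Theorem \ref{mainthmfull} to the atoral combination $a\refl{\frac{\partial q}{\partial z}}+b\refl{\frac{\partial q}{\partial w}}$, convert back via $z\mapsto 1/z$ to get $\vec{P},\vec{Q}$ with $Q(z)$ invertible off $\pi_1(S)$, and transplant the determinantal/eigenvector and non-vanishing arguments from \cite{gK08b}. The one step you flag as the main obstacle --- that for every $a,b>0$ the combination has no zeros on $\overline{\mathbb{D}^2}$ except at singularities of $Z_q$ --- is exactly item (2) of the paper's outline, quoted directly from \cite{gK08b}, so it is available by citation (for all weights $a,b$ and reducible $q$) rather than needing your factorwise adaptation of the toral-polynomial theorem of Section \ref{sec:Fejer}; beyond that, only minor bookkeeping (conjugation under the $z\mapsto1/z$ flip when identifying $Q(z)$ and the exceptional set) separates your write-up from the paper's.
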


\begin{proof}[Guide to the proof]  Everything above is contained in a
  theorem in Knese \cite{gK08b} except for the condition that $Q(z)$
  is invertible for all $z\in \mathbb{D}$, so let us briefly outline
  how all of this can be done.  All of the following are proved in
  Knese \cite{gK08b}:
\begin{enumerate}
\item If $p\in \mathbb{C}[z,w]$ has degree $(n,m)$ and defines a
distinguished variety, then the polynomial
\[
q(z,w) = z^np(\frac{1}{z},w)
\]
is $\mathbb{T}^2$-symmetric and has no zeros on the bidisk.

\item Such a $q$ has the property that for each $a,b>0$
\[
a\refl{\frac{\partial q}{\partial z}} + b \refl{\frac{\partial
    q}{\partial w}}
\]
has no zeros on the closed bidisk $\cbidisk$ except possibly at
the finite number of singularities of $Z_q$, which necessarily occur
on $\mathbb{T}^2$.

\item Such a $q$ satisfies

\begin{align}
&(an+bm)^2|q(z,w)|^2 - 2\text{Re}[(azq_z(z,w)+bwq_w(z,w)) (an+bm)
\overline{q(z,w)}] \nonumber \\
&= |a\refl{\frac{\partial q}{\partial z}}(z,w) + b\refl{\frac{\partial
q}{\partial w}}(z,w)|^2 - |az\frac{\partial q}{\partial z}(z,w) + b w
  \frac{\partial q}{\partial w} (z,w)|^2. \label{lastitem}
\end{align}

\end{enumerate}

By Theorem \ref{mainthmfull}, this last item \eqref{lastitem} can
written as
\[
(1-|z|^2)|\mathbf{E}(z,w)|^2+(1-|w|^2)|\mathbf{F}(z,w)|^2
\]
where $\mathbf{E}$ and $\mathbf{F}$ satisfy the conditions in Theorem
\ref{mainthmfull} (actually we need $w^m\overline{E(1/\bar{w})}$ to be
invertible and $\mathbb{D}$ and $F(z)$ invertible in $\mathbb{D}$, but
this can be arranged).  If we convert back to statements involving the
polynomial $p$ (by replacing $z$ with $1/z$ and multiplying by $z^n$)
we get
\[
\begin{aligned}
  (bm-an) |p(z,w)|^2 &+ 2\text{Re} [(a z\frac{\partial p}{\partial z}(z, w)-b w
    \frac{\partial p}{\partial w}(z,w))\overline{p(z,w)}]\\ & +
  (1-|z|^2) |\vec{P}(z,w)|^2 \\ =&
  (1-|w|^2) |\vec{Q}(z,w)|^2,
\end{aligned}
\]
 where if we write $\vec{Q}(z,w) = Q(z)\mathbf{\Lambda}_m(w)$, we have that
 $Q(z)$ is invertible in $\overline{\mathbb{D}}$ except at first
 coordinates of singular points of $V$ on $\mathbb{T}^2$.  For the rest of
 the theorem, the proofs in Knese \cite{gK08b} can be applied
 unchanged. 
\end{proof}

Here is the promised ``bounded analytic extension'' theorem. The proof
is identical to the proof in Knese \cite{gK08b} for distinguished
varieties with no singularities on the torus.  The only difference is
that in that case $Q(z)$ is invertible on the closed disk and
therefore the quantity 
\[
\sup_{\mathbb{D}} ||Q(z)^{-1}||\  ||Q(z)||
\]
was finite.  

\begin{theorem} \label{extendthm} Let $V$ be a distinguished variety
  and let $\Phi$, $Q$, and $\vec{Q}$ be as in Theorem \ref{sosdist}.
  Then, for any polynomial $f \in \mathbb{C}[z,w]$, the rational
  function
\[
F(z,w) := (1,0, \dots, 0) Q(z)^{-1} f(zI_m, \Phi(z)) \vec{Q}(z,w)
\]
is equal to $f$ on $V\cap\mathbb{D}^2$ and we have the estimates
\[
\begin{aligned}
|F(z,w)| &\leq ||Q(z)^{-1}||\ |\vec{Q}(z,w)| \sup_{V\cap\mathbb{D}^2} |f|\\
& \leq \sqrt{m} ||Q(z)^{-1}||\  ||Q(z)|| \sup_{V\cap\mathbb{D}^2} |f| \\
\end{aligned}
\]
for all $(z,w) \in \mathbb{D}^2$.  Here we are taking the operator
norm of the matrices $Q(z)$ and $Q(z)^{-1}$.
\end{theorem}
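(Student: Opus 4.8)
The plan is to deduce the identity $F|_{V\cap\mathbb{D}^2}=f$ from the eigenvector property of $\vec{Q}$, and then to reduce both estimates to a single operator-norm bound on the matrix $f(zI_m,\Phi(z))$.

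To see $F=f$ on $V\cap\mathbb{D}^2$, fix $(z,w)$ there. By Theorem \ref{sosdist}, $\Phi(z)\vec{Q}(z,w)=w\vec{Q}(z,w)$; since $zI_m$ commutes with $\Phi(z)$, polynomial functional calculus gives $f(zI_m,\Phi(z))\vec{Q}(z,w)=f(z,w)\,\vec{Q}(z,w)$. As $z\in\mathbb{D}$, $Q(z)$ is invertible (Theorem \ref{sosdist}) and $\vec{Q}(z,w)=Q(z)\mathbf{\Lambda}_m(w)$, so
\[
F(z,w)=f(z,w)\,(1,0,\dots,0)Q(z)^{-1}Q(z)\mathbf{\Lambda}_m(w)=f(z,w)\,(1,0,\dots,0)\mathbf{\Lambda}_m(w)=f(z,w).
\]
The same facts show $F$ is rational with no poles in $\mathbb{D}^2$ (neither $\Phi$ nor $Q(z)^{-1}$ has poles in $\mathbb{D}$, and $\vec{Q}$ is a polynomial).

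For the estimates, at any $(z,w)\in\mathbb{D}^2$ one reads off, taking operator norms of the matrices,
\[
|F(z,w)|\le ||(1,0,\dots,0)Q(z)^{-1}||\;||f(zI_m,\Phi(z))||\;|\vec{Q}(z,w)|\le ||Q(z)^{-1}||\;||f(zI_m,\Phi(z))||\;|\vec{Q}(z,w)|,
\]
the last step because the Euclidean norm of a row of $Q(z)^{-1}$ is at most $||Q(z)^{-1}||$. Granting the bound $||f(zI_m,\Phi(z))||\le\sup_{V\cap\mathbb{D}^2}|f|$ (see below), this yields the first inequality; the second then follows from $|\vec{Q}(z,w)|=|Q(z)\mathbf{\Lambda}_m(w)|\le ||Q(z)||\,|\mathbf{\Lambda}_m(w)|$ and $|\mathbf{\Lambda}_m(w)|^2=\sum_{k=0}^{m-1}|w|^{2k}\le m$ for $|w|\le 1$.

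So the whole theorem rests on the claim $||f(zI_m,\Phi(z))||\le\sup_{V\cap\mathbb{D}^2}|f|$ for every $z\in\mathbb{D}$, and proving this is the main obstacle. The idea is a maximum principle in the variable $z$. The $\mathbb{C}^{m\times m}$-valued function $z\mapsto H(z):=f(zI_m,\Phi(z))$ is holomorphic and bounded on $\mathbb{D}$, since $\Phi$ is. For $z\in\mathbb{T}$ the matrix $\Phi(z)$ is unitary, hence normal, so $H(z)$---a polynomial in the single matrix $\Phi(z)$---is normal too, with $||H(z)||=\max_j|f(z,\lambda_j(z))|$, where $\lambda_1(z),\dots,\lambda_m(z)$ are the unimodular eigenvalues of $\Phi(z)$. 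For all but finitely many $z\in\mathbb{T}$, the point $(z,\lambda_j(z))$---a common zero of $\det(wI_m-\Phi(z))$ lying in $\mathbb{T}^2$---belongs to $V$ (this is what the determinantal representation of Theorem \ref{sosdist} encodes on the torus; alternatively one lets $z$ approach $\mathbb{T}$ from inside $\mathbb{D}$ and uses that $V$ is closed). Hence $||H(z)||\le\sup_{V\cap\overline{\mathbb{D}^2}}|f|=\sup_{V\cap\mathbb{D}^2}|f|$ for a.e. $z\in\mathbb{T}$, the equality of suprema being exactly the statement that $V$ exits the bidisk through the distinguished boundary. Since $H$ is bounded and holomorphic on $\mathbb{D}$, the supremum of $||H||$ over $\mathbb{D}$ equals the essential supremum of $||H||$ over $\mathbb{T}$, and the claim follows.

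This reproduces the argument of Knese \cite{gK08b} for distinguished varieties with no torus singularities; the sole difference is that $Q(z)$ is now only guaranteed invertible on $\mathbb{D}$ rather than on $\overline{\mathbb{D}}$, so $||Q(z)^{-1}||$ and $||Q(z)||$ stay finite on $\mathbb{D}$ but may blow up as $z$ tends to a first coordinate of a torus singularity of $V$. That is why the estimate carries the $z$-dependent factor $||Q(z)^{-1}||\,||Q(z)||$ instead of a constant, and why the more detailed Theorem \ref{extendthmsmall} replaces it by the modulus of a rational function of $z$ with poles only on $\mathbb{T}$.
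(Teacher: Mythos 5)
Your architecture is the intended one: the paper itself gives no argument beyond deferring to \cite{gK08b}, and the proof there runs exactly as you set it up --- the eigenvector identity $\Phi(z)\vec{Q}(z,w)=w\vec{Q}(z,w)$ plus $(1,0,\dots,0)\bLam_m(w)=1$ gives $F=f$ on $V\cap\mathbb{D}^2$, both estimates reduce to the single bound $\|f(zI_m,\Phi(z))\|\leq\sup_{V\cap\mathbb{D}^2}|f|$, and that bound is proved by normality of $f(zI_m,\Phi(z))$ on $\mathbb{T}$ together with the maximum principle. The algebraic steps (the row-norm bound, $|\bLam_m(w)|\leq\sqrt{m}$ for $|w|\leq 1$) are all correct.

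The one step that is not yet a proof is your parenthetical justification of the spectral containment: that for almost every $z\in\mathbb{T}$, each eigenvalue $\lambda$ of the unitary $\Phi(z)$ gives a point $(z,\lambda)\in\overline{V\cap\mathbb{D}^2}$ (note it is the closure of $V\cap\mathbb{D}^2$ you need, not merely membership in $V$, since the definition of distinguished variety does not by itself give $\sup_{V\cap\overline{\mathbb{D}^2}}|f|=\sup_{V\cap\mathbb{D}^2}|f|$). Theorem \ref{sosdist} as stated asserts the determinantal representation only inside $\mathbb{D}^2$, so it ``encodes'' nothing on the torus; and your alternative --- let $z$ approach $\mathbb{T}$ and use that $V$ is closed --- tacitly assumes that for $z\in\mathbb{D}$ all eigenvalues of $\Phi(z)$ lie in $\mathbb{D}$, so that they are points of $V\cap\mathbb{D}^2$. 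A priori the contraction $\Phi(z)$ could have a unimodular eigenvalue at an interior point, i.e. $\det(wI_m-\Phi(z))$ could have a zero branch in $\mathbb{D}\times\mathbb{T}$ invisible to the $\mathbb{D}^2$-only representation, and on the boundary such an eigenvalue contributes a term $|f(z,\lambda)|$ to $\|H(z)\|$ that the variety does not control. The hole can be closed: by the maximum principle applied to $\langle\Phi(z)v,v\rangle$, an interior unimodular eigenvalue $\lambda$ is constant in $z$ with a constant reducing eigenvector $v$; then $v^*\Phi(z)=\lambda v^*$ and the eigenvector identity give $(\lambda-w)\,v^*Q(z)\bLam_m(w)=0$, hence $v^*Q(z)\bLam_m(w)=0$, on $V\cap\mathbb{D}^2$, and (taking the defining $p$ reduced, so generic fibers have $m$ points) this contradicts the invertibility of $Q(z)$ on $\mathbb{D}$; once interior eigenvalues are known to lie in $\mathbb{D}$, your limiting argument does the rest. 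Alternatively, import from \cite{gK08b} the sharper fact that $\det(wI_m-\Phi(z))$ agrees with $p(z,w)$ up to a factor depending only on $z$, so that the spectrum of $\Phi(z)$ is the fiber of $V$ for every $z\in\overline{\mathbb{D}}$. With that supplement your argument is complete and coincides with the paper's.
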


In words, the growth of the extension $F$ is controlled by a rational
function of one variable.  We believe there is some novelty to this
theorem; it seems ``extension theorems'' for holomorphic functions on
varieties vary between the very general but non-explicit sheaf
cohomological methods (e.g. see Corollary 10.5.4 in Taylor
\cite{jT02}) and explicit integral formula approaches which require no
singularities on the boundary in order to be able to make estimates
(see e.g. Adachi-Andersson-Cho \cite{AAC99}).  Our theorem is
essentially algebraic and applies without any assumptions about
singularities on the boundary.

\section{Necessity in Agler's Pick interpolation
  theorem} \label{sec:agler} As another application we give a simple
proof of necessity in the Pick interpolation theorem on the
bidisk. This proof sidesteps the use of And\^{o}'s inequality and
cone-separation arguments found in most proofs.  (The proof of
sufficiency can be accomplished with a ``lurking isometry'' argument;
see Lemma \ref{dimensionlemma} for something similar.)  The proof is
very similar to the argument in Cole-Wermer \cite{CW99} for
establishing And\^{o}'s inequality from the sum of squares
decomposition.

\begin{theorem}[Agler] \label{aglerpick} Given distinct points
  $(z_1,w_1), \dots, (z_N,w_N) \in \mathbb{D}^2$ and complex numbers
  $c_1, \dots, c_N \in \mathbb{D}$, there exists a holomorphic
  function $f:\mathbb{D}^2 \to \mathbb{D}$ which interpolates
\[
f(z_j,w_j) = c_j \text{ for } j=1,2,\dots, N
\]
if and only if there exist positive semi-definite $N\times N$ matrices
$\Gamma$ and $\Delta$ such that
\[
1-c_j \bar{c_k} = (1-z_j \bar{z_k})\Gamma_{jk} + (1-w_j\bar{w_k})
\Delta_{jk}
\]
\end{theorem}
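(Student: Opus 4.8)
The plan is to imitate Cole--Wermer's derivation of And\^o's inequality from the sum of squares decomposition, but to evaluate everything at points of $\mathbb{D}^2$ rather than at operators, so that no noncommutativity issues arise; only the forward (``necessity'') implication needs the formula \eqref{1stsos}, the converse being a standard lurking--isometry argument (cf.\ Lemma \ref{dimensionlemma}). So suppose a holomorphic $f:\mathbb{D}^2\to\mathbb{D}$ with $f(z_j,w_j)=c_j$ is given. I would first reduce to the case $\sup_{\mathbb{D}^2}|f|<1$: replacing $f$ by $rf$ with $r<1$ only changes the data to $(z_j,w_j,rc_j)$, and the diagonal $j=k$ of the identity to be proved forces $\Gamma_{jj}\le(1-|z_j|^2)^{-1}$ and $\Delta_{jj}\le(1-|w_j|^2)^{-1}$, so together with positivity all entries of the candidate matrices stay bounded and a subsequential limit as $r\to1$ recovers the general statement.

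The core of the proof is to manufacture, from $f$, a polynomial $q$ with no zeros on $\mathbb{D}^2$ for which the rational inner function $\refl{q}/q$ (reflection taken at a suitably large bidegree, and allowing a unimodular constant) interpolates the data at the nodes, i.e.\ $\refl{q}(z_j,w_j)=c_j\,q(z_j,w_j)$ with $q(z_j,w_j)\ne0$. With such a $q$ in hand, I would apply \eqref{1stsos} to $q$ and polarize it using the polarization theorem for holomorphic functions, obtaining
\[
q(z,w)\overline{q(Z,W)}-\refl{q}(z,w)\overline{\refl{q}(Z,W)}=(1-z\bar Z)\sum_{j} A_j(z,w)\overline{A_j(Z,W)}+(1-w\bar W)\sum_{k} B_k(z,w)\overline{B_k(Z,W)}.
\]
Evaluating this at $(z,w)=(z_j,w_j)$ and $(Z,W)=(z_k,w_k)$ (points of $\mathbb{D}^2$, where $q$ does not vanish) and dividing through by $q(z_j,w_j)\overline{q(z_k,w_k)}$ turns the left side into $1-c_j\bar c_k$ (into $1-\gamma_j\overline{\gamma_k}$ with $\gamma_j\to c_j$ when $q$ only interpolates approximately), while the right side becomes $(1-z_j\bar z_k)\Gamma_{jk}+(1-w_j\bar w_k)\Delta_{jk}$ with
\[
\Gamma_{jk}=\sum_\ell \frac{A_\ell(z_j,w_j)\overline{A_\ell(z_k,w_k)}}{q(z_j,w_j)\overline{q(z_k,w_k)}},\qquad \Delta_{jk}=\sum_\ell \frac{B_\ell(z_j,w_j)\overline{B_\ell(z_k,w_k)}}{q(z_j,w_j)\overline{q(z_k,w_k)}},
\]
which are positive semidefinite, being Gram matrices of the vectors $\mathbf{A}(z_j,w_j)/q(z_j,w_j)$ and $\mathbf{B}(z_j,w_j)/q(z_j,w_j)$. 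This is exactly the required identity.

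The main obstacle is the construction of $q$. In one variable this step is Fej\'er--Riesz: writing $1-|p|^2=|r|^2$ on $\mathbb{T}$ for a polynomial approximant $p$ of $f$ with $\|p\|_{\overline{\mathbb{D}}}<1$ completes $p$ to a $2\times2$ rational inner matrix, a Redheffer (M\"obius) transformation of that matrix by a small constant produces Blaschke products converging locally uniformly to $p$, and one takes $q$ to be the denominator of such a Blaschke product and lets the approximation errors go to zero (again controlling the matrices by their diagonals). The corresponding step on the bidisk is where care is needed: I would approximate $f$ locally uniformly by polynomials $p$ with $\|p\|_{\overline{\mathbb{D}^2}}<1$ and then approximate $p$ at the nodes by rational inner functions of the form $(\text{unimodular constant})\cdot\refl{q}/q$ with $q$ zero--free on $\mathbb{D}^2$, using Rudin's description of rational inner functions on $\mathbb{D}^2$ together with density of rational inner functions in the closed unit ball of $H^\infty(\mathbb{D}^2)$ in the topology of uniform convergence on compacta; making this reduction clean, without tacitly reintroducing And\^o's inequality, is the delicate point of the argument. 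Everything else is bookkeeping with the several nested limits.
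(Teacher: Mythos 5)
Your proposal is correct and follows essentially the same route as the paper: prove the identity for rational inner functions $\refl{q}/q$ with $q$ zero-free on $\mathbb{D}^2$ by evaluating the polarized sums of squares formula at the nodes and dividing by $q(z_j,w_j)\overline{q(z_k,w_k)}$ to obtain Gram matrices, then pass to general $f$ by approximation together with compactness of positive semidefinite matrices with uniformly bounded diagonals. The ``delicate point'' you flag is handled in the paper simply by invoking Rudin's extension of Carath\'{e}odory's theorem (Theorem 5.5.1 in Rudin), which gives pointwise approximation of any holomorphic $f:\mathbb{D}^2\to\mathbb{D}$ by rational inner functions without any appeal to And\^{o}'s inequality, so your intermediate polynomial approximation, the initial normalization to $\sup|f|<1$, and the one-variable Fej\'er--Riesz/Redheffer machinery are unnecessary.
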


\begin{proof}[Proof of necessity:] We first prove the theorem for
  rational inner functions and then use an approximation theorem to
  prove necessity in general.  So, let $f$ be a rational inner
  function on the bidisk.  Every rational inner function can be
  written as $f = \refl{p}/p$ for some $p \in \mathbb{C}[z,w]$ of
  degree at most $(n,m)$ having no zeros on the bidisk (see Rudin
  \cite{wR69} Theorem 5.5.1).  Decomposing $p$ as in \eqref{mainsosfull}
  and setting $(z,w)=(z_j,w_j)$ and $(Z,W) = (z_k,w_k)$ we have
\[
\begin{aligned}
  p(z_j,w_j)&\overline{p(z_k,w_k)} -
  \refl{p}(z_j,w_j)\overline{\refl{p}(z_k,w_k)} \\
  =& (1-z_j\bar{z_k}) \ip{\mathbf{E}(z_j,w_j)}{\mathbf{E}(z_k,w_k)} \\
  &+ (1-w_j\bar{w_k}) \ip{\mathbf{F}(z_j,w_j)}{\mathbf{F}(z_k,w_k)}.
\end{aligned}
\]
Therefore, if $f(z_j,w_j) = (\refl{p}/p)(z_j,w_j) = c_j$, then 
\[
\Gamma_{jk} = \frac{1}{p(z_j,w_j)\overline{p(z_k,w_k)}}
\ip{\mathbf{E}(z_j,w_j)}{\mathbf{E}(z_k,w_k)}
\]
and
\[
\Delta_{jk} = \frac{1}{p(z_j,w_j)\overline{p(z_k,w_k)}}
\ip{\mathbf{F}(z_j,w_j)}{\mathbf{F}(z_k,w_k)}
\]
are both positive semi-definite matrices and they satisfy
\begin{equation} \label{aglerdecomp}
1-c_j \bar{c_k} = (1-z_j \bar{z_k})\Gamma_{jk} + (1-w_j\bar{w_k})
\Delta_{jk}
\end{equation}
as desired.

In general, suppose $f:\mathbb{D}^2 \to \mathbb{D}$ is holomorphic and
$f(z_j,w_j) = c_j$.  Rudin's extension of Carath\'{e}odory's theorem
to the polydisk (see Theorem 5.5.1 of Rudin \cite{wR69} ), says that
$f$ is the pointwise limit of a sequence of rational inner functions:
$f_\alpha \to f$ as $\alpha \to \infty$, where $\alpha$ is used to
index the positive integers.  Corresponding to each such rational
inner function $f_\alpha$, we write $f_\alpha(z_j,w_j) =
c_{\alpha,j}$ and we choose positive semi-definite matrices
$\Gamma_{\alpha}, \Delta_{\alpha}$ so that an equation analogous to
\eqref{aglerdecomp} holds:
\begin{equation} \label{alphadecomp} 1-c_{\alpha, j} \bar{c}_{\alpha,k}
  = (1-z_j \bar{z_k})(\Gamma_{\alpha})_{jk} + (1-w_j\bar{w_k})
  (\Delta_{\alpha})_{jk}.
\end{equation}
The set of positive semi-definite matrices (of a fixed size) with
diagonal entries bounded by some constant is compact (their operator
norms are bounded by their traces which are uniformly bounded).  The
diagonal entries of $\Gamma_{\alpha}$ and $\Delta_{\alpha}$ are
bounded independently of $\alpha$ (e.g. it is not hard to prove
\[
\frac{1}{1-|z_j|^2} \geq (\Gamma_{\alpha})_{jj}
\]
for $j=1,\dots, N$) and therefore we may choose a subsequence so that
$\Gamma_{\alpha}$ converges to some positive semi-definite matrix
$\Gamma$ and $\Delta_{\alpha}$ converges to some positive
semi-definite matrix $\Delta$.  Therefore, if we take the limit as
$\alpha \to \infty$ in equation \eqref{alphadecomp} we have proved
\[
1-c_j \bar{c_k} = (1-z_j \bar{z_k})\Gamma_{jk} + (1-w_j\bar{w_k})
\Delta_{jk},
\]
which proves necessity in general.
\end{proof} 

\begin{question} \label{pickquestion} Can the uniqueness in Theorem
  \ref{mainthm} be carried over in some way to the above theorem?
\end{question}

Solutions to extremal Pick problems in two variables (those solvable
with a function of norm one but no less) are not unique as they are in
one variable, so we are necessarily vague in our question.

\section{Questions} \label{questions} We have already asked three
questions: Questions \ref{gcdquestion}, \ref{trigquestion}, and
Question \ref{pickquestion}.  Here are two others.  One of the most
fundamental questions to come out of our research is the following:
\begin{question} When is a rational function $p/q$ in
  $L^2(\mathbb{T}^2)$?  
\end{question}

Here we may as well assume $p, q \in \mathbb{C}[z,w]$ are relatively
prime but we are otherwise not imposing any conditions on their zero
sets.  If we impose restrictions, we can ask a more concrete question.

Suppose $q \in \mathbb{C}[z,w]$ has degree $(n,m)$, no zeros on the
bidisk, and finitely many zeros on $\mathbb{T}^2$ and suppose $p \in
\mathbb{C}[z,w]$ has degree $\leq (n-1,m-1)$. If $p/q \in
L^2(\mathbb{T}^2)$, then the sums of squares decomposition (as in
Theorem \ref{rearrangethm}) tells us that there is a constant $c$ such
that
\begin{equation} \label{estimate}
|q(z,w)|^2 - |\refl{q}(z,w)|^2 \geq c (1-|z|^2)(1-|w|^2)|p(z,w)|^2
\end{equation}
for $(z,w) \in \mathbb{D}^2$, since $p$ will be in $\gkboxsm_\mu$ for
the Bernstein-Szeg\H{o} measure $\mu$ associated to $q$.

\begin{question} Is the converse true? Does the estimate
  \eqref{estimate} imply $p/q \in L^2(\mathbb{T}^2)$?
\end{question}

\section*{Notational Index and Conventions}
In this section we index where various notations and terms are defined
in the paper. We also list our notational conventions.

\begin{tabular}{|l|l|} \hline
Notation & Location \\ \hline
$\refl{q}(z,w)$ & Def \ref{def:reflection} \\ \hline
$\bLam_n(z)$ $\bLam_m(w)$ & Eq \eqref{lambda} \\ \hline
$\mathbb{C}^N[z], \mathbb{C}^N[z,w]$ & Notation \ref{mainthmnotation}
\\ \hline
toral & Def \ref{def:toral} \\ \hline
atoral & Def \ref{def:atoral} \\ \hline
distinguished variety & Def \ref{def:distvar} \\ \hline
$d\sigma=d\sigma(z,w)$ & Eq \eqref{Lebesgue} \\ \hline
``degree $(n,m)$'' & Def \ref{def:degree} \\ \hline
$\hat{q}(j,k)$ & Eq \eqref{def:hat} \\ \hline
$\gkbox, \gkboxr, \gkboxu, \gkboxsm, \gkboxll, \gkboxur$ & Notation
\ref{boxnotation} \\ \hline
$\ip{f}{g}_\mu$ & Eq \eqref{innerprod} \\ \hline
$KV$, $K\gkbox_\mu$, etc. & Notation \ref{kernelnotation} \\ \hline
$w\gkboxsm_\mu, z\gkboxsm_\mu, \gkboxrperpdn_\mu, \gkboxllperp_\mu$,
etc. & Notation \ref{complementnotation} \\ \hline
$\mathcal{I}_\mu$ & Eq \eqref{def:ideal} \\ \hline
``$\mathbb{T}^2$-symmetric'' & Def \ref{t2symmetric} \\ \hline
$L_{(Z,W)}$ & Eq \eqref{def:L} \\ \hline
$\pi_1, \pi_2$ & Eq \eqref{def:pis} \\ \hline
$Z_q$ & Eq \eqref{def:Zq} \\ \hline
\end{tabular}

Notational conventions:

\begin{tabular}{|l|l|} \hline
$n,m$ & fixed positive integers (see Remark \ref{fixnm}) \\
$p,q$ & elements of $\mathbb{C}[z,w]$ \\ 
$\mathbf{E}, \mathbf{F},
\mathbf{G}, \mathbf{A}, \mathbf{B}, \mathbf{Q}$ & vector polynomials
\\ 
$E, F, A, B, Q$ & matrix polynomials in one variable \\ 
$\ip{\cdot}{\cdot}$ with no subscript & inner product on
$\mathbb{C}^N$ ($N$ determined from context) \\
$L^2(\mathbb{T}^2)$ & $L^2$ on the torus with respect to Lebesgue
measure \\ 
$L^2(\mu), L^2(\rho)$ & $L^2$ on the torus with respect to the measure
$\mu$ or $\rho$ \\ 
$H^2(\mathbb{T}), H^2(\mathbb{T}^2)$ & classical Hardy space on
$\mathbb{T}$ or $\mathbb{T}^2$ \\
$\Phi, \Psi$ & one variable matrix valued inner functions \\ \hline
\end{tabular}

\bibliography{nozonbidisk}

\end{document}